\author{Frédéric Valet}
\newacronym{ZK22}{$\text{ZK}_{2d}$}{}
\newacronym{ZK23}{$\text{mZK}$}{}
\newacronym{ZK32}{$\text{ZK}_{3d}$}{}
\newtheorem{lemm}{Lemma}
\newtheorem*{lemm*}{Lemma}
\newtheorem{theo}[lemm]{Theorem}
\newtheorem{prop}[lemm]{Proposition}
\newtheorem{rem}[lemm]{Remark}
\newtheorem{coro}[lemm]{Corollary}
\newtheorem{defi}[lemm]{Definition}
\newcommand{\e}{E}
\newcommand{\Lc}{\mathcal{L}}
\renewcommand{\v}{v}
\newcommand{\spann}{\text{Span}}
\renewcommand{\ker}{\mathrm{Ker}}
\newcommand{\phit}{\phi}
\newcommand{\psit}{\psi}
\newcommand{\htilde}{h}
\newcommand{\by}{\mathbf{y}}
\newcommand{\bx}{\mathbf{x}}
\newcommand{\bv}{\mathbf{v}}
\newcommand{\balpha}{\mathbf{\alpha}}
\newcommand{\bi}{\mathbf{i}}
\newcommand{\bj}{\mathbf{j}}
\newcommand{\beone}{\mathbf{e}_1}
\newcommand{\oned}{\llbracket 1; d\rrbracket}
\newcommand{\oneK}{\llbracket 1; K\rrbracket}
\title{Asymptotic K-soliton-like Solutions of the Zakharov-Kuznetsov type equations}
\date{\today}
\author{Frédéric Valet}
  \normalsize \textsc{Frédéric Valet} \par \small 
\begin{document}

\maketitle
\let\thefootnote\relax\footnotetext{2010 \textit{Mathematics Subject Classification} : 35Q53 (primary), 35Q35, 35B40, 37K40.}
\let\thefootnote\relax\footnotetext{ \textit{Key words :} Zakharov-Kuznetsov equation; multi-soliton.}

\begin{abstract}
We study here the Zakharov-Kuznetsov equation in dimension $2$ and $3$ and the modified Zakharov-Kuznetsov equation in dimension $2$. Those equations admit solitons,  characterized by their velocity and their shift. Given the parameters of $K$ solitons $R^k$ (with distinct velocities), we prove the existence and uniqueness of a multi-soliton $u$ such that
\[ \| u(t) - \sum_{k=1}^K R^k(t) \|_{H^1} \to 0 \quad \text{as} \quad t \to +\infty. \]
The convergence takes place in $H^s$ with an exponential rate for all $s \ge 0$. The construction is made by successive approximations of the multi-soliton. We use classical arguments to control of $H^1$-norms of the errors (inspired by Martel \cite{Mar}), and introduce a new ingredient for the control of the $H^s$-norm in dimension $d\geq 2$, by a technique close to monotonicity.
\end{abstract}

\section{Introduction}

In this paper we study different versions of the equation
\begin{align}\label{ZK}\tag{ZK}
\begin{cases}
\partial_t u+ \partial_1 \left( \Delta u + u^p \right)=0, \\
u(0,\bx)= u_0(\bx)
\end{cases}
\quad (t,\bx) \in \mathbb{R}\times \mathbb{R}^d, \quad u(t,\bx) \in \mathbb{R}, \quad u_0 \in H^1,
\end{align}
where the pair $(d,p)$ is:
\begin{itemize}
\item $(2,2)$ for the Zakharov-Kuznetsov equation in dimension $2$, with $\bx= (x_1,x_2)$; \hfill \gls{ZK22}
\item $(2,3)$ for the modified Zakharov-Kuznetsov equation, with $\bx= (x_1,x_2)$; \hfill \gls{ZK23}
\item $(3,2)$ for the Zakharov-Kuznetsov equation in dimension $3$, with $\bx=(x_1,x_2,x_3)$, \hfill \gls{ZK32}
\end{itemize}
$\partial_i$ is the derivative with respect to the $i^\text{th}$ space-coordinate, and $\Delta=\sum\limits_{i=1}^d \partial_i^2$ the Laplacian. 

A solution of (\ref{ZK}) enjoys two conserved quantities, at least formally, the mass and the energy:
\begin{align*}
M(u(t)):=\frac{1}{2} \int_{\mathbb{R}^d} \vert u(t,\bx) \vert^2 d\bx \quad \text{and} \quad E(u(t)):= \int_{\mathbb{R}^d} \left( \frac{1}{2} \vert \nabla u(t,\bx) \vert^2- \frac{1}{p+1} ( u(t,\bx) )^{p+1} \right) d\bx.
\end{align*}

Equation (\gls{ZK32}) was first propsed by Zakharov and Kuznetsov \cite{KZ} to describe the evolution of non-linear ion-accoustic waves in magnetized plasma. Equations (\gls{ZK22}) and (\gls{ZK32}) were derived from the Euler-Poisson system in dimension $d=2$ and $d=3$ by Lannes, Linares and Saut in \cite{LLS}; (\gls{ZK22}) was also derived from the Vlasov-Poisson system by Han-Kwan \cite{Han13}. Some physical considerations in \cite{BS} explain how (\gls{ZK23}) is derived. 

Regarding the Cauchy problem, Faminskii \cite{Fam} first proved that (\gls{ZK22}) was globally well-posed in $H^1(\mathbb{R}^2)$, and Molinet-Pilod in \cite{MP} improved this result to local well-posedness in $H^s(\mathbb{R}^2)$ with $s>\frac{1}{2}$. In dimension $d=3$, Ribaud-Vento \cite{RV12.1} proved local well posedness in $H^s(\mathbb{R}^3)$ for $s>1$, and then Molinet-Pilod \cite{MP} and independently in Grünrock-Herr \cite{GH} proved global well-posedness in the same spaces. For (\gls{ZK23}),  the first result of local well-posedness was done by Linares-Pastor \cite{LP09} for $s>3/4$ (see also \cite{LP11}), and then Ribaud-Vento \cite{RV12.2} improved it to $s >1/4$. In a recent work Kinoshita \cite{Kin} improved local well-posedness of (\gls{ZK32}) in $H^s$ for $s\geq \frac{1}{2}$. From a different perspective,  Bhattacharya-Farah-Roudenko \cite{BFR} showed that solutions to the focusing (\gls{ZK23}) in $H^s$ for $s\geq \frac{3}{4}$ are global, provided that the mass of the initial data is less than the mass of the ground state.

\bigskip

The following scaling transform leaves the set of solutions invariant:
\begin{align*}
u_\lambda (t,\bx)= \lambda^{\frac{1}{p-1}} u(\lambda^\frac{3}{2}	t, \lambda^\frac{1}{2} \bx).
\end{align*}
In particular, the $H^{s_c}$-norm is conserved under the scaling, with the critical scaling exponent $s_c:= \frac{d}{2}-\frac{2}{p-1}$ : $ \| u_\lambda(t) \|_{H^{s_c}}= \| u (t) \|_{H^{s_c}}$. In the studied cases, the equations are in the subcritical case for (\gls{ZK22}) and (\gls{ZK32}) and in the critical case for (\gls{ZK23}).

\subsection{Solitons and multi-solitons}

Nonlinear travelling waves are special solutions of these equations. Those fundamental objects keep their form along the time, and move at a velocity $c$ in one direction. Their existence is the result of a balance between the dispersive and non-linear parts of (\ref{ZK}). From \cite{CMPS}, it is proved that those objects exist only if they move along the first axis, and thus satisfy the elliptic equation:
\begin{align}\label{elliptic}
-cu +\Delta u +u^p=0.
\end{align}
Without further assumption, there exist many solutions in $H^1(\mathbb R^d)$ of this elliptic equation. We will consider here only the ground state $Q_c \in H^1(\mathbb R^d)$ which is radially symmetric and positive: up to  translation in space, and up to the sign for $p=3$, see \cite{K} and \cite{BL}, $Q_c$ is the unique positive solution to \eqref{elliptic}. $Q_c(\bx -ct \beone)$ is thus a solution to \eqref{ZK}, called a soliton.

Denoting $Q:=Q_1$, we observe that $Q_c(\bx)= c^{\frac{1}{p-1}} Q(\sqrt{c}\bx)$. Unlike the solitons of the Korteweg-de-Vries equation, $Q$ has no explicit expression. By ODE arguments one can still obtain decay at infinity:
\begin{align} \label{decroitexpo}
\forall \balpha \in \mathbb{N}^d, \quad \vert \partial_1^{\alpha_1}\cdots \partial_d^{\alpha_d} Q(\bx) \vert = \vert \partial^\balpha Q(\bx) \vert \leq K_1(\alpha) e^{- \vert \bx \vert},
\end{align}
and we obtain exponential decay for all $Q_c$ by scaling. 

Given $K$ velocities $c^k >0$, shifts $\by^k \in \mathbb{R}^d$ and signs $\sigma^k\in \{(\pm 1)^p\}$, denote $R^k$ the soliton
\begin{align*}
R^k(t,\bx):= \sigma^k Q_{c^k} (\bx-c^k t \beone -\by^k).
\end{align*}
Due to the non-linearity, a sum of solitons is no longer a solution. However, if the velocities $c^k$ are distincts, the solitons will decouple and interact weakly, and one wonders if there exist nonlinear solutions $u(t)$ which behave like the sum of the $R_k$ for large times: more precisely $u(t)$ should behave like
\begin{align*}
 u(t,\bx) \simeq R(t,\bx) \quad \text{where} \quad R(t):= \sum_{k=1}^K R^k(t).
\end{align*}
Such solutions are called (pure) multi-solitons:

\begin{defi}
A solution $u$ of \eqref{ZK} is called a (pure) multi-soliton (or $K$-soliton) if for some $T_0 \in \mathbb R$, $u \in \mathcal{C}([T_0,\infty),H^1)$ and it behaves at infinity as a sum of solitons with distinct speeds:
\begin{align}\label{defi_multi}
u \in \mathcal{C}([T_0,\infty),H^1), \quad \text{ and } \quad \lim_{t\rightarrow +\infty} \left\| u(t)-\sum_{k=1}^K Q_{c^k}(\bx- \bv^k(t)) \right\|_{H^1}=0, 
\end{align}
where $\bv^k(t) = c^k t\beone - \by^k $, for some $0< c^1 < \cdots < c^K$ and $(\by^k)_k\in (\mathbb{R}^d)^K$. 

We say that such a $u$ is a multi-soliton associated with the velocities $(c^k)_k$ and the shifts $(\by^k)_k$.
\end{defi}

Multi-soliton were first constructed for the (KdV) equation using the inverse scattering method, see \cite{Miu76}. In the non integrable context, multi-solitons in the sense of the definition (\ref{defi_multi}) were first built for the nonlinear Schrödinger (NLS) and generalized Korteweg-de Vries (gKdV) equations by Merle \cite{Mer90}, Martel \cite{Mar} and Martel-Merle \cite{MM} in the $L^2$ -critical and subcritical cases, and then for the supercritical cases by Côte-Martel-Merle \cite{CMM11}. Combet in \cite{Com10} gave a classification of multi-solitons for the $L^2$-supercritical (gKdV) equation. The study of multi-solitons was also developped for other dispersive equations, like the non-linear Klein-Gordon equation in \cite{CM14}, the water wave system in \cite{MRT13} and the damped Klein-Gordon equation \cite{Fei98,CMYZ}. 

The dynamics of multi-solitons was also studied, mainly for Korteweg-de Vries type equations. We refer to \cite{MMT} for a stability result. One can also ask about the behavior as $t \to -\infty$ (or minimal time of existence) of multi-solitons: this in particular requires to understand the collision of solitons. For the (KdV) equation, see \cite{Miu76}, the inverse scattering method give explicit formulas which shows that the collision are elastic (with just and explicit shift in space): the multi-solitons structure remains at $-\infty$. However, for the non integrable quartic (gKdV) equation, Martel and Merle proved in \cite{MM07} that the collisions are not fully elastic: a pure 2-soliton at $-\infty$ is no longer a 2-soliton at $+\infty$, and they are able to describe the defect of elasticity. 

In this article, we will only focus on multi-solitons for positive times.

\subsection{Main results} 

Our main result in this paper is to prove the existence and uniqueness of multi-solitons of \eqref{ZK} in the sense of Definition \eqref{defi_multi}.
\begin{theo}\label{th1}
Let $K\in \mathbb{N}^*$, $K$ distinct velocities $0<c^1<\cdots<c^K$ and $K$ shifts $(\by^k)_k$. There exists a multi-soliton of (\ref{ZK}) associated to those velocities and shifts, denoted by $R^*$ and defined for times in $[T_0,+\infty)$. It is unique in $H^1$ in the sense of (\ref{defi_multi}).  Furthermore, $R^* \in \mathcal C^\infty([T_0,+\infty) \times \mathbb R^d)$ and there exist a constant $\delta>0$, and for all $s \ge 1$, a constant $A_s$ such that:
\begin{align*}
 \forall t\geq T_0, \quad \left\| R^*(t)-R(t) \right\|_{H^s}\leq A_s e^{-\delta t}.
\end{align*}
\end{theo}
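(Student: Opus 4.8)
The plan is to construct $R^*$ by the compactness scheme of Martel \cite{Mar}: realise the multi-soliton as a limit of solutions that are \emph{exactly} equal to $R$ at a sequence of later and later times. Concretely, fix $T_n \to +\infty$ and, for each $n$, let $u_n$ solve \eqref{ZK} backward in time from the final datum $u_n(T_n)=R(T_n)$. The global well-posedness results cited above (together with the a priori control below, which prevents blow-up in the critical case \gls{ZK23}) show each $u_n$ is defined at least on $[T_0,T_n]$. The crux is a \emph{uniform} estimate
\[ \|u_n(t)-R(t)\|_{H^1}\leq A_1\, e^{-\delta t},\qquad t\in[T_0,T_n], \]
with $A_1,\delta,T_0$ independent of $n$; everything else is comparatively soft.

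To establish this I would run a bootstrap: assume the bound with constant $2A_1$ on a maximal subinterval and improve the constant. Three ingredients are needed. First, \emph{modulation}: as solitons are defined only up to symmetries, introduce time-dependent geometric parameters $\bz^k(t)$ (translations, and scalings in the critical case) so that the error $\varepsilon:=u_n-\sum_k \sigma^k Q_{c^k}(\cdot-\bz^k(t))$ satisfies orthogonality conditions annihilating the kernel of each linearised operator $\Lc^k$. Second, \emph{coercivity}: the second variation of the localised action $E+\sum_k c^k M$ is positive definite on the subspace cut out by these orthogonality conditions, so control of this functional gives control of $\|\varepsilon\|_{H^1}$. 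Third, \emph{almost-monotonicity}: since $0<c^1<\cdots<c^K$, for $t\geq T_0$ large the solitons separate along the $\beone$-axis, and weights $\psi\big(x_1-\tfrac12(c^k+c^{k+1})t\big)$ produce localised mass/energy quantities that are monotone up to exponentially small errors, thanks to the transport structure furnished by the outer $\partial_1$ in \eqref{ZK}. Combining the last two yields a differential inequality closing the bootstrap.

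Granting the uniform bound, $(u_n(T_0))_n$ is bounded in $H^1$; pass to a weak limit, call $R^*$ the solution of \eqref{ZK} with that initial datum, and propagate the estimate to the limit using weak lower semicontinuity of $\|\cdot\|_{H^1}$ and continuity of the flow, which gives $\|R^*(t)-R(t)\|_{H^1}\leq A_1 e^{-\delta t}$ and hence \eqref{defi_multi}. For the $H^s$ statement and smoothness I would induct on $s$: this is exactly where the paper's new ingredient enters, since the one-dimensional devices for upgrading $L^2$ monotonicity to higher Sobolev norms fail for $d\geq 2$. The idea is to apply localised, almost-monotone functionals directly to the higher derivatives $\partial^\balpha\varepsilon$, $|\balpha|\leq s$, now adapted to the anisotropic ZK structure so that longitudinal and transverse derivatives are controlled simultaneously, with the nonlinear and commutator terms absorbed by the already-proven lower-order bounds; the induction gives $\|u_n(t)-R(t)\|_{H^s}\leq A_s e^{-\delta t}$ uniformly in $n$, and thus $R^*\in\mathcal C^\infty$ by Sobolev embedding and the equation.

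For uniqueness, let $\tilde u$ be another multi-soliton with the same data. Both $\tilde u$ and $R^*$ obey the exponential $H^1$ decay, so $w=\tilde u-R^*\to 0$ in $H^1$; rerunning the modulation, coercivity and monotonicity estimates on $w$ (with no prescribed final datum) produces a closed differential inequality forcing $w\equiv 0$ on $[T_0,+\infty)$. I expect the main obstacle to be the monotonicity step in dimension $d\geq 2$, both for $H^1$ and especially for $H^s$: unlike in the one-dimensional theory, the solitons are separated only in the $x_1$ direction and have full transverse extent, so the localised functionals must be built to respect the anisotropy of \eqref{ZK} — this is precisely the technical innovation the abstract announces.
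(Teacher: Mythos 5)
Your overall architecture --- backward solutions from final data $R(S_n)$, uniform $H^1$ bounds via modulation, coercivity and almost-monotonicity, compactness, higher-order estimates, then uniqueness --- is the same as the paper's. The genuine gap is in your uniqueness step. If you literally ``rerun the modulation, coercivity and monotonicity estimates'' on $w=\tilde u-R^*$, you obtain what the paper's Corollary \ref{coro_multisol} obtains for an arbitrary multi-soliton, namely exponential decay $\|w(t)\|_{H^1}\lesssim e^{-\gamma t}$; this does not force $w\equiv 0$, since $R^*-R$ itself decays at such a rate. The actual mechanism is different in kind: because $w$ is the difference of two exact solutions, the inhomogeneous source terms of the construction step (the soliton interactions, of size $e^{-\gamma t}$ but independent of the error) are absent, and every term in the evolution of the relevant localised functional of $w$ carries a factor of $w$. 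One then lands on the self-improving estimate $\|w(t)\|_{H^1}\le C e^{-\gamma_1 t}\sup_{t'\ge t}\|w(t')\|_{H^1}$, which forces $w\equiv 0$ for $t$ large and hence everywhere by local uniqueness. Carrying this out requires $L^\infty$ control of up to three derivatives of $R^*-R$, i.e.\ the $H^{3+\frac{d}{2}+}$ (in practice $H^5$) exponential closeness of $R^*$ to $R$ proved in the existence part; so smoothness of $R^*$ must be established \emph{before} uniqueness, a dependence your plan does not record.

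On the $H^s$ step, your instinct (localised almost-monotone functionals at the level of $\partial^{\balpha}v$) points the right way, but the difficulty is not that ``longitudinal and transverse derivatives must be controlled simultaneously''. The obstruction is the quadratic term $\int(\partial_{\bi}v)^2\,\partial_1 R$ with top-order derivatives on \emph{both} copies of $v$, localised at the soliton centers, which in $d=1$ is removed by an algebraic modification of the functional but in $d\ge 2$ cannot be. The paper's device is a weight $\eta$ depending on $x_1$ only, whose derivative $\eta_1$ dominates $|R|^{p-1}$ pointwise, so that $\frac{d}{dt}\int v_{\bi}^2\eta$ controls $\int|\nabla v_{\bi}|^2\eta_1$; integrating in time from $t$ to $S_n$ bounds the offending term by the already-known $\dot H^{s-1}$ decay. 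Your induction also glosses over the fact that the $\dot H^s$ bootstrap a priori closes only after a time $T_s$ depending on $s$ (through the trilinear terms), and extra care (settling $H^4$ first, then splitting $[T_4,T_s]$ and $[T_s,S_n]$) is needed so that the exponential decay holds from a time independent of $s$, as the theorem asserts.
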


Let us introduce right now an important constant related to the interaction of the solitons:
\begin{align*}
\sigma_0 := \min (c^1, c^2-c^1, \cdots, c^K-c^{K-1}).
\end{align*} 
The constant $\delta$ in Theorem \ref{th1} depends on the different velocities: it can be chosen as $\delta \leq \frac{\sigma_0^{3/2}}{8}$. The time $T_0$ depends on the velocities and the shifts. Observe that none of them depend on the regularity index $s$.

\vspace{1cm}


An important outcome of this paper is the proof of uniqueness of multi-solitons. As far as we can tell, it is the first time that this property is proved, besides the case of the (gKdV) equations, where it was proved in the $L^2$ subcritical and critical cases in Martel \cite{Mar}. Regarding the $L^2$ supercritical case of (gKdV), recall  that \cite{Com10} gave an exhaustive classification: multi-soliton form a $K$-parameter family (each soliton has one instability direction, which yields one degree of freedom in the multi-soliton). 

\bigskip


We do not consider the modified Zakahrov-Kuznetsov equation in dimension $3$ (mZK$_{\text{3d}}$) in Theorem \ref{th1} for different reasons. Let us recall that, even though (mZK$_{\text{3d}}$) is not supported by any physical model, as far as we know, it is an $L^2$ supercritical equation, which makes it interesting by itself. In view of the study in Côte-Martel-Merle \cite{CMM11} for the $L^2$ supercritical (NLS) and (gKdV) equations, the construction would require the use of a extra topological argument. Recalling \cite{Com10} for (gKdV),  one would no longer expect uniqueness of multi-solitons, but rather a classification into a $K$-parameter family. In order to keep this article a reasonable size, we do not raise this case and leave it for future works. We focus here only on $L^2$ subcritical and critical cases. 

\bigskip


One can naturally ask about stability properties of  multi-solitons. Let us recall that de Bouard \cite{deB} proved  orbital stability of a soliton in the $L^2$ subcritical cases  (\gls{ZK22}) and (\gls{ZK32}), and orbital instability in the $L^2$ supercritical cases. Côte-Muñoz-Pilod-Simpson in \cite{CMPS} strengthened this result to asymptotic stability of a soliton for (\gls{ZK22}): if the initial condition is $H^1$-close to a soliton, then it converges to a 1-soliton in $H^1(\{ \bx; x_1 >\beta t\})$ for some $\beta >0$ small. \cite{CMPS} also give a result of asymptotic stability in the same spirit, for the sum of decoupled solitons: in view of Theorem \ref{th1}, their result can naturally be interpreted as asymptotic stability in $H^1(\{ \bx; x_1 >\beta t\})$ of \emph{multi-solitons}. The corresponding results for  (\gls{ZK32}) remain open.

In the $L^2$ critical case (\gls{ZK23}), solitons are unstable and can lead to some kind of blow up (see Farah-Holmer-Roudenko \cite{FHR} and Farah-Holmer-Roudenko-Kay \cite{FHRY}).

\bigskip

The multi-solitons constructed in Theorem \ref{th1} are the usual one: each soliton interacts weakly with the others solitons, so that the trajectories of their center is not affected. Recently, the interest grew for highly interacting multi-solitons, where the trajectories of the centers is dictated (at leading order) by the interaction with its neighbour solitons. This happens for example for (gKdV) when considering to two solitons with same mass (and so, same speed).

The first result concerning highly interacting multi-solitons is due to Martel-Raphaël in \cite{MR18}, for the non-linear Schrödinger equation (NLS): they constructed a solution which behaves as a the sum of $N$-solitons placed on the vertices of regular $N$-gone (this solution blows up in infinite time). Nguyen \cite{Ngu_17} studied the problem for (gKdV): he constructed, for the $L^2$ subcritical and supercritical cases, $2$-solitons with mass $c_1=c_2=1$, and center at $ t \pm \ln(\widetilde{c}t)) + O(1)$ (so that the distance between the centers is $2\ln(t) + O(1)$). A similar regime was studied for (NLS) in Nguyen \cite{Ngu_19}. We expect that an analoguous phenomenon can happen for \eqref{ZK}. 

\subsection{Outline and notations}

Recall the parameters of the $K$ solitons are given in the statement of Theorem \ref{th1}: the velocities \begin{align*}
0<c^1 < \cdots <c^k < \cdots < c^K,
\end{align*}
and the shifts $\by^k =  (y^k_1,y^k_2)$ for $d=2$, or $\by^k=(y^k_1,y^k_2,y^k_3)$ for $d=3$. We denote as before
\[ R^k(t,x) = \sigma^k Q_{c^k} (\bx-c^k t \beone -\by^k) \quad \text{and} \quad R(t) = \sum_{k=1}^K R^k(t). \]
The main steps of the proof of Theorem \ref{th1} are the following.

\bigskip

We consider a sequence of times $S_n$ tending to $+\infty$, and $u_n$ the sequence of solutions of (\ref{ZK}) with final data $u_n(S_n)=R(S_n)$. We prove first that the error $u_n(t)-R(t)$ is controlled uniformly in $n$ in $H^1$ on a time interval of the form $[T_0,S_n]$, where $T_0$ independent of $n$:
\begin{align*}
\forall t \in [T_0,S_n], \quad \| u_n(t) - R(t) \|_{H^1(\mathbb{R}^d)} \leq C e^{-\sigma t}.
\end{align*}
This result uses monotonicity typical to (KdV)-like equations, see \cite{Mar}, and the coercivity of the linearised operator around the sum $R(t)$ of $K$ solitons. The arguments are close to \cite{Mar}. This is done in Section  \ref{controlH1}.

\medskip

Then, we prove that the error is also controlled in $H^s$, for any integer $s \ge 2$. This requires a special attention due to space dimension $d \geq 2$. For (gKdV), the arguments developed in Martel \cite{Mar} are appropriate functionals of the derivatives of $u$ at level $H^s$  combined with a Grönwall. In dimension $d\geq 2$, the different combinations of derivatives due to the non-linearity can not be managed by such an algebraic argument: the terms left aside would make the Gronwall argument fails. To circumvent this requires a careful treatment, see (\ref{example}) for more precision. The point is that the variation of those terms can be dealt with by a monotonicity-type argument. This is done in Section \ref{controlHs}.

 We conclude, in Section \ref{constructionfinale}, the existence of a multi-soliton $R^*$ in $H^s$: we take as initial condition for $R^*$ a weak limit of $u_n(T_0)$, show that it is actually a strong limit in $L^2$, and so by interpolation in $H^s$ for any $s \in \mathbb R^+$, and then use continuity of the flow in $H^s$ for large $s$ to obtain the rate of convergence $\| R^*(t) - R(t) \|_{H^s}$.
 
 \medskip 

We finish with uniqueness of multi-solitons, in Section \ref{unicite}. For this, we compute the difference between an arbitrary multi-soliton, and $R^*$ that we just constructed. As this is the difference of two nonlinear solutions, we obtain better estimates than before, taking advantage of smoothness and exponential rate of convergence for $R^*$. In particular, some estimates are based on $L^\infty$-norm of the third derivative of the difference between the multisoliton $R^*$ and $R$, which implies a necessary a $H^{3+\frac{d}{2}^+}$ control of $R^*-R$. It means that we actually use the $H^5$ regularity of $R^*$ (see the inequality (\ref{4ieme_terme_fin})).

\section{Control of the $H^1(\mathbb{R}^d)$-norm of the error}\label{controlH1}

We consider an increasing sequence of times $(S_n)_n$ going to $+\infty$, and the backward solution $u_n$ of (\ref{ZK}) satisfying 
\begin{align*}
u_n(S_n)=R(S_n).
\end{align*}
As $R(S_n) \in H^\infty$, the solution $u_n$ is well defined on a time interval $(T_n^*;S_n]$, and for all $s \ge 0$, $u_n \in \mathcal C((T_n^*;S_n], H^s)$. 

Our main concern in this section is to prove the following bootstrap proposition which states that if one has a uniform small of the error $\| u_n(t) - R(t) \|_{H^1}$ on some interval of time, the error does actually decay with an exponential rate on the same interval of times.

\begin{prop}\label{propo5}
There exist constants $L>0$, $\alpha>0$, $A_1$ and a time $T_0>0$ all depending on $\sigma_0$ satisfying $A_1 e^{-\frac{1}{L}\frac{\sigma_0}{8} T_0} \leq \frac{\alpha}{2}$ such that the following hold. Let $t^* \in (\max(T_0,T_n^*),S_n)$. If 
\begin{align}\label{hypo}
\sup_{t \in [t^*;  S_n]} \| u_n(t) -R(t) \|_{H^1} \leq \alpha, 
\end{align} 
then
\begin{align} \label{decay}
\forall t \in [t^*; S_n], \quad \| u_n(t)-R(t) \|_{H^1} \leq A_1e^{-\frac{1}{L}\frac{\sigma_0}{8} t}.
\end{align}
\end{prop}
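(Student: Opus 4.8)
The plan is to adapt the scheme of Martel \cite{Mar}: a modulation argument to recover orthogonality, the coercivity of a localized energy--mass functional, and a monotonicity estimate that must be reworked for $d \ge 2$. Write $\eta(t) := u_n(t) - R(t)$, which is small in $H^1$ on $[t^*,S_n]$ by hypothesis (\ref{hypo}). First I would set up modulation: each soliton carries a $(d+1)$-parameter family of symmetries (the $d$ translations $\by^k$ and the scaling $c^k$), so for every $t$ in the bootstrap window I introduce modulated parameters $\widetilde{c}^k(t)$, $\widetilde{\by}^k(t)$ close to the prescribed ones and the modulated error $\varepsilon(t) := u_n(t) - \widetilde{R}(t)$, with $\widetilde{R} := \sum_k \sigma^k Q_{\widetilde{c}^k}(\bx - \widetilde{\bv}^k(t))$. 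The parameters are fixed by the implicit function theorem so that $\varepsilon$ is orthogonal to the generalized null directions $\partial_{x_i}\widetilde{R}^k$ ($i \in \oned$) and $\partial_c Q_{\widetilde{c}^k}$ of the linearized operator at each soliton. Differentiating the orthogonality relations in time and using the equation gives a system for the parameter derivatives whose right-hand side is controlled by $\|\varepsilon\|_{L^2}$ together with the exponentially small soliton-interaction terms, yielding $|\dot{\widetilde{c}}^k| + |\dot{\widetilde{\by}}^k| \le C(\|\varepsilon\|_{L^2} + e^{-\delta t})$.

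Next I would build the Lyapunov functional. Let $\psi_k$ be cutoffs depending only on the transported variable $x_1 - \gamma^k t$, with intermediate speeds $\gamma^k$ lying strictly between consecutive velocities and chosen relative to $\sigma_0$, isolating the $k$-th soliton, and set
\[ \mathcal{F}(t) := E(u_n(t)) + \sum_{k=1}^K \widetilde{c}^k(t)\, M_k(t), \qquad M_k(t) := \frac12 \int_{\mathbb{R}^d} u_n^2(t,\bx)\, \psi_k(t,\bx)\, d\bx. \]
Expanding $\mathcal{F}$ around $\widetilde{R}$, the first-order term vanishes since each $\widetilde{R}^k$ solves (\ref{elliptic}), and the quadratic part is, up to exponentially small coupling, $\sum_k \langle \Lc_{\widetilde{c}^k}\varepsilon, \varepsilon\rangle$. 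Using the spectral analysis of de Bouard \cite{deB}, valid precisely in the $L^2$-subcritical and critical cases treated here, the orthogonality conditions from the modulation step, and the spatial decoupling of the solitons, I obtain a coercivity estimate $\mathcal{F}(t) - \mathcal{F}(\widetilde{R}(t)) \ge \lambda \|\varepsilon(t)\|_{H^1}^2 - C e^{-\delta t}$ for some $\lambda > 0$.

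The delicate point in dimension $d \ge 2$ is the monotonicity. Since $E$ and the full mass are conserved, the time derivative of $\mathcal{F}$ comes only from the moving weights $\psi_k$ and from $\dot{\widetilde{c}}^k$. Differentiating $M_k$ and substituting $\partial_t u_n = -\partial_1(\Delta u_n + u_n^p)$, the transport and flux terms combine, through the $\partial_1\Delta$ structure, into $\int (\partial_1 u_n)^2 \psi_k' + \sum_{j=2}^d \int (\partial_j u_n)^2 \psi_k'$ with the correct sign whenever $\psi_k' \ge 0$; the transverse derivatives are kept favorable precisely because $\psi_k$ is a monotone function of $x_1$ alone. The remaining nonlinear contributions and the mass leaking across the windows are exponentially small in $t$ by the exponential decay (\ref{decroitexpo}) of $Q$ in every direction and the speed separation measured by $\sigma_0$. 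This yields the almost-monotonicity $\frac{d}{dt}\mathcal{F}(t) \ge -C e^{-\delta t}$.

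Finally I combine the three ingredients. At $t = S_n$ the error vanishes, so $\mathcal{F}(S_n) - \mathcal{F}(\widetilde{R}(S_n))$ is exponentially small; integrating the almost-monotonicity between $t$ and $S_n$ bounds $\mathcal{F}(t) - \mathcal{F}(\widetilde{R}(t))$ by $C e^{-\delta t}$, and coercivity then gives $\|\varepsilon(t)\|_{H^1}^2 \le C e^{-\delta t}$. Integrating the parameter bounds from $S_n$ controls $\|\eta - \varepsilon\|_{H^1}$ by the modulation, transferring the estimate to $\|\eta\|_{H^1} \le A_1 e^{-\delta t}$ with $\delta = \frac1L\frac{\sigma_0}{8}$, which closes the bootstrap. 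The main obstacle I anticipate is making the monotonicity rigorous for $d \ge 2$: one must guarantee that the transverse-derivative terms retain the right sign and that every interaction remainder is genuinely of size $e^{-\delta t}$, which forces a careful calibration of the intermediate speeds $\gamma^k$ against $\sigma_0$ and a systematic use of the exponential decay of $Q$ in all directions.
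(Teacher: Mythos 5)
Your overall architecture (modulation, localized mass/energy functionals, almost-monotonicity, coercivity, evaluation at $S_n$ where the error vanishes) is the same as the paper's, but there is a genuine gap at the coercivity step. The quadratic form $\langle \Lc_c \varepsilon,\varepsilon\rangle$ is coercive only under orthogonality to $Q_c$ itself (and, for the critical case (\gls{ZK23}), to the negative eigenvector $Z$), see Proposition \ref{propo_Z}; orthogonality to $\partial_c Q_{\widetilde{c^k}}=\Lambda Q_{\widetilde{c^k}}$, which is what your modulation imposes, is \emph{not} a condition that removes the negative direction (the standard spectral argument gives positivity of $\Lc$ on $(\Lc \Lambda Q)^\perp = Q^\perp$, not on $(\Lambda Q)^\perp$). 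Moreover the condition $\varepsilon\perp \widetilde{R^k}$ cannot be obtained by modulating $\widetilde{c^k}$ in the critical case, since $\frac{d}{dc}\|Q_c\|_{L^2}^2=0$ there. Consequently the scalar products $\int w\,\widetilde{R^k}$ necessarily survive as a defect in the coercivity inequality -- this is exactly the content of Lemma \ref{coercivity_lemm} -- and they must be estimated by a separate argument. The paper does this by exploiting that $\int w\,\widetilde{R^k}\phi^k$ enters the expansion of the local mass with coefficient $+2$ and that of the local energy with coefficient $-c^k$ (see \eqref{estimeemasse} and \eqref{estimeeenergie}); combining these opposite signs with the almost-monotonicity of the partial sums of masses and of modified energies yields the two-sided bound \eqref{une_direction}, $\vert\int w\widetilde{R^k}\phi^k\vert\lesssim e^{-\delta t}+\|w\|_{H^1}^2$, which is then fed back into the coercivity. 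Your proposal omits this step entirely, and without it the claimed lower bound $\mathcal F(t)-\mathcal F(\widetilde R(t))\ge\lambda\|\varepsilon\|_{H^1}^2-Ce^{-\delta t}$ is unjustified. Relatedly, the critical case requires the extra orthogonality \eqref{ortho2} to $\widetilde{Z^k}$, which your set-up does not provide.

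A secondary imprecision: in the monotonicity computation the nonlinear contribution $\int u^{p+1}\,\partial_1\psi^\kappa$ is not exponentially small on its own. It is exponentially small only on the region far from the transition of $\psi^\kappa$; near the transition it must be absorbed into the positive quadratic terms $\int(|\nabla u|^2+u^2)\vert\partial_1\psi^\kappa\vert$ using the smallness $\alpha$ of the error and the smallness of $R$ between consecutive solitons. This splitting is standard but necessary, and your sketch glosses over it by asserting that all remainders are of size $e^{-\delta t}$.
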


This section is decomposed as follow. We first recall some basic properties of \eqref{ZK} solitons. The next subsection introduces mass and energy related functional localized around each soliton. We then develop a modulation technique to decompose the solution into a  sum of modulated solitons and a remainder $w$ which satisfies orthogonality conditions. Third, we control the evolution of local masses and local energies by a monotonicity argument. Those local quantities are quadratic in the remainder term $w$ (at leading order): due to the orthogonality condition, and an Abel transform, they are coercive and yield the desired bootstrap bound (\ref{decay}).

\subsection{Bounds on the interaction and coercivity}

As mentioned above, the interaction of two solitons is weak. For example, for the solitons $R^1$ and $R^2$, denoting $x_0:= \frac{c^1 t+c^2 t}{2}$, we have
(using also \eqref{decroitexpo}), for $i_1, i_2 \in \llbracket 1;d \rrbracket$:
\begin{align}
\MoveEqLeft
\int_{\mathbb{R}^d} R_1 R_2 + \vert \partial_{i_1} R_1 \partial_{i_2} R_2 \vert \leq C_{c^1, c^2}  \iint_{x_1=-\infty}^{x_0} e^{-\sqrt{c^2} \vert \bx-c^2t\beone \vert} d\bx +C_{c^1, c^2} \iint_{x_1=x_0}^\infty e^{-\sqrt{c^1} \vert \bx -c^1t\beone \vert} d\bx \nonumber \\
	& \leq C_{c^1, c^2} e^{-\sqrt{\sigma_0} \frac{\vert c^1-c^2 \vert}{2}t}.\label{interaction}
\end{align}

We can also notice that, as $Q_c$ is radially symmetric, by the change of variable $x_{i_2}':=-x_{i_2}$, with $i_1 \neq i_2$, there hold:
\begin{align} \label{parity}
 \iint \partial_{i_1}Q_{c^k} \partial_{i_2} Q_{c^k}d\bx = \frac{1}{4} \iint (\partial_{i_1} Q_{c^k}+\partial_{i_2}Q_{c^k})^2 d\bx -\frac{1}{4}\iint (\partial_{i_1} Q_{c^k}-\partial_{i_2}Q_{c^k})^2 d\bx = 0.
\end{align}

The study of a solution close to a multisoliton brings us naturally to study the linearised operator $\Lc_c$ around a soliton:
\begin{align}\label{L_c}
\Lc_c(\eta):= -\Delta \eta + c\eta -pQ_c^{p-1} \eta.
\end{align}
We recall some well known properties on the operator $\Lc:=\Lc_1$, see \cite{Wei82}, \cite{Wei85}, \cite{Wei86}, \cite{K}, \cite{deB} and a good review in \cite{CMPS}.
\begin{prop}\label{propo_Z}
The self-adjoint operator $\Lc$ satisfies the following properties:
\begin{enumerate}
\item $\ker \Lc$ is generated by two directional derivatives of the soliton : $\ker \Lc= \spann \left\{ \partial_i Q, i \in \llbracket 1,d \rrbracket \right\}$.
\item $\Lc$ has a unique negative eigenvalue $-\lambda_0$ (with $\lambda_0 >0$) of multiplicity $1$, and the corresponding eigenvectors are engendered by a positive, radially symmetric function $Z$. For more convenience, we suppose $\| Z \|_{L^2}=1$. $Z$ also satisfies the exponential decay (\ref{decroitexpo}).
\item The essential spectrum of $\Lc$ is on the real axis, and strictly positive:
\begin{align*}
\sigma_{ess}= [\lambda_{ess}; +\infty), \quad \text{ with } \lambda_{ess} >0.
\end{align*}
\item Up to orthogonality conditions, the operator $\Lc$ is coercive:
\begin{itemize}
\item for (\gls{ZK22}) and (\gls{ZK32}), if $u\perp \partial_i Q$ for $i\in \llbracket 1,d \rrbracket$ and $u\perp Q$, then 
\begin{align*}
(\Lc u,u) \geq \lambda_{ess} \| u \|_{H^1}.
\end{align*}
\item for (\gls{ZK23}), if $u\perp \partial_i Q$ for $i\in \llbracket 1,d \rrbracket$, $u\perp Q$ and $u \perp Z$ then 
\begin{align*}
(\Lc u,u) \geq \lambda_{ess} \| u \|_{H^1}.
\end{align*}
\end{itemize}
\end{enumerate}
\end{prop}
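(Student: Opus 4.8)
The statement collects classical spectral facts about the Schrödinger-type operator $\Lc$, and I would organise the proof around its four assertions, disposing of the algebraic identities first and reserving the genuinely spectral-theoretic content for the end. The inclusion $\spann\{\partial_i Q : i \in \oned\} \subseteq \ker\Lc$ is immediate: differentiating the elliptic equation $-\Delta Q + Q - Q^p = 0$ with respect to $x_i$ yields $-\Delta \partial_i Q + \partial_i Q - pQ^{p-1}\partial_i Q = 0$, that is $\Lc \partial_i Q = 0$. The existence of a negative direction comes from a Pohozaev/virial computation: multiplying the elliptic equation by $Q$ and integrating gives $\int |\nabla Q|^2 + Q^2 = \int Q^{p+1}$, hence $(\Lc Q, Q) = \int |\nabla Q|^2 + Q^2 - p\int Q^{p+1} = (1-p)\int Q^{p+1} < 0$ since $p>1$. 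Thus the bottom of the spectrum is strictly negative.

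For assertion 3, since $pQ^{p-1}$ is a bounded, exponentially decaying potential by \eqref{decroitexpo}, it is a relatively compact perturbation of $-\Delta + 1$, so Weyl's theorem gives $\sigma_{ess}(\Lc) = \sigma_{ess}(-\Delta + 1) = [1,\infty)$, establishing $\lambda_{ess} = 1 > 0$. Consequently the bottom eigenvalue $-\lambda_0$ lies strictly below the essential spectrum and is an isolated eigenvalue of finite multiplicity; by the Perron--Frobenius principle for Schrödinger operators its ground state is simple and may be chosen positive (this is $Z$), and its radial symmetry follows from that of the potential $Q^{p-1}$. This gives assertion 2.

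The hard part is the converse inclusion $\ker\Lc \subseteq \spann\{\partial_i Q\}$, the non-degeneracy of the soliton. I would decompose any $\eta \in \ker\Lc$ into spherical harmonics, $\eta = \sum_{\ell,m} a_{\ell m}(r)\, Y_{\ell m}$, so that $\Lc$ acts on each mode through a radial operator $\Lc_\ell$ carrying the centrifugal term $\ell(\ell + d - 2)/r^2$. Since $\partial_i Q = Q'(r)\, x_i/|\bx|$ is an $\ell=1$ mode whose radial profile $Q' < 0$ does not vanish, $Q'$ is the ground state of $\Lc_1$ and spans the kernel of the $\ell=1$ sector, producing exactly the $d$ translation directions. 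For $\ell \geq 2$ one has $\Lc_\ell = \Lc_1 + [\ell(\ell+d-2)-(d-1)]/r^2 > \Lc_1 \geq 0$, so there is no kernel. The delicate sector is $\ell = 0$: since $\Lc_0 = \Lc_1 - (d-1)/r^2 < \Lc_1$, its ground state (the radial $Z$) carries the strictly negative eigenvalue, so a radial kernel element would be forced to be the second radial eigenfunction, with exactly one node. Excluding this is precisely the non-degeneracy/uniqueness content (Kwong, Weinstein), and I expect this nodal analysis, tied to the monotonicity and decay of $Q$, to be the main obstacle.

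Finally, for the coercivity of assertion 4, I would use the spectral decomposition $L^2 = \spann\{Z\} \oplus \ker\Lc \oplus H_+$, with $\Lc \geq \mu > 0$ on the positive subspace $H_+$. Orthogonality to the $\partial_i Q$ removes the kernel component of $u$ and orthogonality to $Z$ removes the negative direction, so that $(\Lc u, u) = (\Lc u_+, u_+) \geq \mu \|u_+\|_{L^2}^2 \gtrsim \|u\|_{H^1}^2$, after recovering the $H^1$ norm from $(\Lc u, u) + \|u\|_{L^2}^2$; this is the critical-case statement. For the subcritical cases one replaces orthogonality to $Z$ by orthogonality to $Q$: the Vakhitov--Kolokolov/scaling sign condition $\frac{d}{dc}\|Q_c\|_{L^2}^2 > 0$, equivalently $(\Lc^{-1}Q, Q) < 0$ obtained from $\Lc_c \partial_c Q_c = -Q_c$, guarantees through a constrained-minimisation argument that $(\Lc u, u) \geq 0$, hence $\gtrsim \|u\|_{H^1}^2$, on $\{u \perp Q,\ u \perp \partial_i Q\}$.
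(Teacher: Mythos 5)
Your proposal is correct, and it is worth noting that the paper itself treats Proposition \ref{propo_Z} as a recollection of known facts: items 1--3 are attributed to the literature (Weinstein, Kwong, de Bouard, and the review in \cite{CMPS}), and only the coercivity of item 4 receives a proof, in Appendix A. Your treatment of items 1--3 (differentiating the elliptic equation, the identity $(\Lc Q,Q)=(1-p)\int Q^{p+1}<0$, Weyl's theorem giving $\lambda_{ess}=1$, Perron--Frobenius and radial symmetry for $Z$, and the spherical-harmonics reduction with the hard $\ell=0$ non-degeneracy deferred to Kwong and Weinstein) follows the standard route and, like the paper, does not reprove the genuinely difficult kernel characterization. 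Where the two arguments diverge is item 4. You work with the spectral decomposition $L^2=\spann\{Z\}\oplus\ker\Lc\oplus H_+$ and, in the subcritical case, invoke the Vakhitov--Kolokolov sign condition $(\Lc^{-1}Q,Q)=-\tfrac12\tfrac{d}{dc}\|Q_c\|_{L^2}^2\big\vert_{c=1}<0$ (via $\Lc_c\partial_c Q_c=-Q_c$) to replace orthogonality to $Z$ by orthogonality to $Q$. The paper instead starts from Weinstein's variational characterization $\inf_{(v,Q)=0}(\Lc v,v)=0$, deduces nonnegativity under $v\perp Z$ by a two-dimensional negative-subspace argument, and upgrades nonnegativity to a strictly positive lower bound by a Lagrange-multiplier and compactness argument on the constrained minimization. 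The two routes are equivalent in substance: yours is more self-contained in explaining \emph{why} orthogonality to $Q$ suffices in the subcritical case, whereas the paper's makes explicit the passage from $(\Lc u,u)\ge 0$ to $(\Lc u,u)\gtrsim\|u\|_{H^1}^2$ --- a step your sketch compresses into a single ``hence'', and which does require the kernel orthogonality together with a compactness (or Lagrange-multiplier) argument, not merely the sign of $(\Lc^{-1}Q,Q)$.
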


A detailed proof of coercivity is given in appendix.

\subsection{Cut-off functions, adapted masses and energies}

As in \cite{Mar}, we use adequate cut-off function, defined on one direction, which will mainly see the mass of one soliton. Consider the function $\psi:\mathbb{R} \rightarrow \mathbb{R}$ defined by:
\begin{align*}
\psi(x_1):=\frac{2}{\pi} \arctan(e^{-\frac{1}{L}x_1}),
\end{align*}
with $\psi(-\infty)=1$ and $\psi(+\infty)=0$, and $L$ to define later. We obtain \begin{align} \label{majoration_psi}
\vert \psi'(x_1) \vert \leq \frac{1}{L}\psi(x_1), \quad \vert \psi ^{(3)}(x_1) \vert \leq \frac{1}{L^2}\vert \psi'(x_1)\vert.
\end{align}
As in \cite{Mar}, the use of a parameter $L$ will play a role in the study of the monotonicity of the mass and for the proof of uniqueness, where we need to control the third derivative by the first. In this article, we make the choice to also use this parameter to the condition of coercivity on each $H_k$, see (\ref{coercivite_pb_L}). Considering the moving solitons, we want to separate them at half of the distance between two following solitons:
\begin{align}\label{mk}
\forall 1 \leq k \leq K-1 \text{ , } m^k(t):= \frac{c^{k+1}+c^k}{2}t+\frac{y^{k+1}_1+y^k_1}{2}, \text{ and } \psi^k(t,x_1):=\psi(x_1-m^k(t)),
\end{align}
and each cut-off function $\phi^k$ isolates one soliton:
\begin{gather*}
\phi^1(t,\bx):=\psi^1(t,x_1) \text{ , } \phi^K (t,\bx):= 1-\psi^{K-1}(t,x_1) \text{ , } \\
\forall 2 \leq k \leq K-1 ,\quad  \phi^k(t,\bx):= \psi^k(t,x_1)-\psi^{k-1}(t,x_1).
\end{gather*}

We define the new masses and energies, which mainly focus on the $k$-th soliton:
\begin{align*}
M^k(t):= \int u(t)^2 \phi^k(t) d\bx \quad \text{and} \quad E^k(t):= \int \left( \frac{1}{2} \vert \nabla u(t) \vert^2- \frac{1}{p+1} u(t) ^{p+1} \right) \phi^k(t) d\bx.
\end{align*}

For each $k$, the sum of mass $\sum\limits_{l \leq k} M^l(t)$ enjoys an (almost) monotonicity property. For the energy, the same claim is not clear. We consider adequate modified energies instead, which are better behaved:
\begin{align}\label{modified_energy}
\widetilde{E^k}(t):= E^k(t) + \frac{\sigma_0}{4} M^k(t).
\end{align}

\begin{rem}
We could have used compactly supported cut-off functions,  as $\psi$ a real valued $C^3$-function satisfying, on $\mathbb{R}$:
\begin{align*}
\psi(x_1)=\left\{ \begin{array}{l}
1 \text{ on } ]-\infty,-1] \\
0 \text{ on } [1, \infty[
\end{array}\right. , \quad \psi'(x_1) \leq 0, \quad \phi^k(t,x) = \psi\left( \frac{x_1-m^{k}(t)}{L}\right).
\end{align*}
This is done for the Schrödinger equation in \cite{MM}. However, in order to prove uniqueness, we use monotonicity argument and this requires exponentially decaying cut-off functions.
\end{rem}

\subsection{Modulation}\label{modulation}

We now use a modulation technique to obtain orthogonality conditions on the remainder term. For a sake of clarity, we suppose in this part that all the solitons are positive, so $\sigma^k =1$; the proof is identical for different signs. For $\mathcal{C}^1$ functions $(\bx ^k(t))_{1 \leq k \leq K}$ and $\widetilde{c^k}(t)$, we denote a modulated soliton, a modulated eigenvector defined in Proposition \ref{propo_Z} and respectively the error by 
\begin{align*}
\widetilde{R^k}(t,\bx) & :=Q_{\widetilde{c^k}(t)}(\bx -\by^k -\widetilde{\bx^k}(t)), \\
\widetilde{Z^k}(t,\bx) &:= Z_{\widetilde{c^k}(t)}(\bx-\by^k-\widetilde{\bx^k}(t))
\end{align*} 
and
\begin{align*}
w(t,\bx) & := u(t,\bx) -\widetilde{R}(t,\bx).
\end{align*}
We will ask for two types of orthogonality conditions:
\begin{align}\label{ortho1}
 \forall k\in \oneK, \forall i \in \oned \text{, } \int_{\mathbb{R}^d} w(t,\bx) \partial_i \widetilde{R^k}(t,\bx)d\bx =0.
\end{align}
or
\begin{align}\label{ortho2}
\forall k\in \oneK, \int_{\mathbb{R}^d} w(t,\bx) \widetilde{Z^k}(t,\bx) d\bx=0.
\end{align}

\begin{prop}
There exist a time $T_1>0$, $\alpha>0$, $K_1>0$ such that the following holds. Let $S_n >T_1$ and an initial condition $u(S_n)$ such that the solution $u$ of (\ref{ZK}) satisfies, for some $t^* \in [T_1,S_n]$:
\begin{align*}
\sup_{t^* \leq t \leq S_n} \|u(t)- R(t)\|_{H^1} \leq \alpha.
\end{align*}
Then :
\begin{enumerate}
\item for the subcritical cases (\gls{ZK22}) and (\gls{ZK32}), there exist $K$ unique functions $(\widetilde{\bx^k}(t))_{1\leq k \leq K}$ in $\mathcal{C}^1([T_1,S_n], \mathbb{R}^d)$ and we take $\widetilde{c^k}(t):=c^k$, such that the error term $w(t,\bx)$ satisfies the orthogonality conditions (\ref{ortho1}),
\item for the critical case (\gls{ZK23}), there exist $2K$ unique functions $(\widetilde{\bx^k}(t))_{1\leq k \leq K}$ in $\mathcal{C}^1([T_1,S_n], \mathbb{R}^d)$ and $(\widetilde{c^k}(t))_{1 \leq k \leq K}$ in $\mathcal{C}^1([T_1,S_n], \mathbb{R})$, such that the error term $w(t,\bx)$ satisfies the orthogonality conditions (\ref{ortho1}) and (\ref{ortho2}).
\end{enumerate}
We obtain a bound on the $H^1$-norm of the error and on the derivatives of $(\widetilde{x^k_i})_{i,k}$ and of $(\widetilde{c^k})_k$, on $[t^*,S_n]$:
\begin{align}\label{proches2}
\sum_k \left\vert \widetilde{\bx^k}(t)-c^k t \beone \right\vert+ \sum_k \left\vert \widetilde{c^k}(t)-c^k t  \right\vert+ \| w(t) \|_{H^1} \leq K_1\left\| u(t)-R(t) \right\|_{H^1} ,
\end{align}
and
\begin{align}\label{proches}
\forall k , \quad \left\vert \widetilde{\bx^k}'-\widetilde{c^k} \beone \right\vert + \left\vert \widetilde{c^k}' \right\vert \leq K_1 \sum_{j=1}^K  \left( \int w^2(t) e^{-\sqrt{\sigma_0}\vert \bx-c^j t\beone \vert} \right)^\frac{1}{2}+K_1 e^{-\frac{1}{2}\sigma_0 \sqrt{\sigma_0}t}.
\end{align}
\end{prop}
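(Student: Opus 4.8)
The plan is to construct the modulation parameters via the implicit function theorem and then derive the differential estimates \eqref{proches2} and \eqref{proches} by differentiating the orthogonality conditions along the flow. I will treat the subcritical cases \gls{ZK22} and \gls{ZK32} (only the space shifts $\widetilde{\bx^k}$ modulate, with $\widetilde{c^k}\equiv c^k$) and the critical case \gls{ZK23} (both $\widetilde{\bx^k}$ and $\widetilde{c^k}$ modulate) in parallel, since the argument is structurally the same.

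First I would set up the implicit function theorem at a fixed time. Define a map $\Phi$ from the modulation parameters $(\widetilde{\bx^k},\widetilde{c^k})_k$ (in a neighbourhood of $(c^k t\beone,c^k)_k$) to the collection of inner products $\bigl(\int w\,\partial_i\widetilde{R^k}\bigr)_{k,i}$ and, in the critical case, $\bigl(\int w\,\widetilde{Z^k}\bigr)_k$, where $w=u-\widetilde{R}$ depends on the parameters through $\widetilde{R}$. At the reference point where $u=R$ and the parameters match the true soliton parameters, $w$ is small (of size $\|u-R\|_{H^1}\le\alpha$) and the Jacobian of $\Phi$ is, at leading order, block-diagonal: differentiating $\int w\,\partial_i\widetilde{R^k}$ in $\widetilde{x^k_j}$ produces $-\int \partial_j\widetilde{R^k}\,\partial_i\widetilde{R^k}$, which by the parity relation \eqref{parity} is nonzero only for $i=j$ and equals $-\|\partial_i Q_{c^k}\|_{L^2}^2\neq0$; the cross terms between distinct solitons $k\neq l$ are exponentially small by the interaction estimate \eqref{interaction}. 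So the Jacobian is invertible for $T_1$ large and $\alpha$ small, giving existence and uniqueness of $\mathcal C^1$ parameters with $w$ satisfying the orthogonality conditions. The bound \eqref{proches2} follows directly from the quantitative inverse function theorem, comparing $\Phi$ at the constructed parameters with $\Phi$ at the reference parameters.

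Next I would prove the dynamical estimate \eqref{proches}. Differentiating each orthogonality condition in $t$ and inserting the equation satisfied by $w$ (obtained by subtracting the soliton equation from \eqref{ZK}), one gets a linear system whose unknowns are $\widetilde{\bx^k}{}'-\widetilde{c^k}\beone$ and $\widetilde{c^k}{}'$. The matrix of this system is again diagonally dominant with invertible leading block (same parity and interaction estimates), so I solve for the derivatives. The right-hand side splits into a term linear in $w$ — which after integrating by parts against the rapidly decaying $\partial_i\widetilde{R^k}$ or $\widetilde{Z^k}$ contributes the weighted $L^2$-norms $\bigl(\int w^2 e^{-\sqrt{\sigma_0}|\bx-c^jt\beone|}\bigr)^{1/2}$ — and a purely interaction-driven term coming from the fact that $\sum_k R^k$ is not an exact solution, which is bounded by the soliton-interaction estimate \eqref{interaction}, yielding the $e^{-\frac12\sigma_0\sqrt{\sigma_0}t}$ term.

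The main obstacle I anticipate is the bookkeeping that separates the genuinely linear-in-$w$ contribution from the error terms, and in particular showing that after integration by parts the localized weights $e^{-\sqrt{\sigma_0}|\bx-c^jt\beone|}$ emerge with the correct constant; this requires using both the exponential decay \eqref{decroitexpo} of $Q$ (and of $Z$) and the self-adjoint structure of $\Lc_c$ to convert derivatives landing on $w$ into weighted norms rather than genuine $H^1$-derivatives. A secondary subtlety, specific to the critical case \gls{ZK23}, is that the extra modulation of $\widetilde{c^k}$ couples into the system, so I must verify that the augmented Jacobian (now including the $\int w\,\widetilde{Z^k}$ rows and the $\partial_{\widetilde{c^k}}$ columns) remains invertible; this uses that $\int Q_c\,Z_c\neq0$ together with the fact that $Z$ is orthogonal to the kernel directions $\partial_i Q$, keeping the new block decoupled from the translation block at leading order.
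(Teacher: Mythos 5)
Your proposal is correct and follows essentially the same route as the paper: an implicit function theorem whose Jacobian is diagonally dominant thanks to the parity relation \eqref{parity}, the interaction estimate \eqref{interaction}, and (in the critical case) the non-vanishing of $\int QZ$, followed by differentiating the orthogonality relations along the flow and inverting the resulting diagonally dominant linear system to obtain \eqref{proches}. No substantive difference from the paper's argument.
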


\begin{proof}
Let prove the modulation lemma in the critical case (\gls{ZK23}). We use the classical implicit function theorem. To do this, we consider the function:
\begin{align*}
g:
\left( \left( \begin{array}{ccc}
\widetilde{x^1_1}  &, \cdots, & \widetilde{x^K_1} \\
\widetilde{x^1_2} &, \cdots, & \widetilde{x^K_2} \\
\widetilde{c^1} &, \cdots, & \widetilde{c^K}
\end{array}\right), u \right) \rightarrow \left( \begin{array}{ccc}
\int w \partial_1 \widetilde{R^1} &, \cdots , &\int w \partial_1 \widetilde{R^K} \\
\int w \partial_2 \widetilde{R^1} &, \cdots , &\int w \partial_2 \widetilde{R^K}\\
\int w \widetilde{Z^1} &, \cdots , &\int w \widetilde{Z^K}
\end{array}\right).
\end{align*}
The derivative of $g_{i_1,k_1}$ with respect to $\widetilde{x^{k_2}_{i_2}}$, with $k_1 \neq k_2$, is an integral of the product of the derivatives of two solitons, considered far enough from each other. The terms along the diagonal of the differential of $g$ will be dominant:
\begin{align*}
\frac{d}{d\widetilde{x_i^k}}g_{i,k} = \int w \partial_i^2 \widetilde{R^k} - \| \partial_iQ_{c^k} \|_{L^2}^2.
\end{align*}
We need to ascertain that the crossing derivatives of one soliton are small enough, with $i_1 \neq i_2$ in $\oned$, by (\ref{parity}) and by parity:
\begin{align*}
\frac{d}{d\widetilde{x_{i_2}^k}}g_{i_1,k} = \int w \partial_{i_1} \partial_{i_2}\widetilde{R^k}- \int \partial_{i_2}Q_{\widetilde{c^k}} \partial_{i_1} Q_{\widetilde{c^k}}= \int w \partial_{i_1} \partial_{i_2}\widetilde{R^k} \quad \text{and} \quad \frac{d}{d\widetilde{c^k}} g_{i_1,k}=\int w \Lambda \widetilde{R^k}.
\end{align*} 
For the derivatives of the third components of $g$:
\begin{align*}
\frac{d}{d\widetilde{x^k_{i_2}}}g_{3,k}= \int w \partial_{i_2} \widetilde{Z^k} \quad \text{and} \quad \frac{d}{d\widetilde{c^k}}g_{3,k}=  \int w \Lambda \widetilde{Z^k} - \int \Lambda \widetilde{R^k} \widetilde{Z^k}=  \int w \Lambda \widetilde{Z^k} -\frac{1}{2 \lambda_0} \int Q Z.
\end{align*}

$Q$ and $Z$ are both positive, so the last term is negative.

Finally the derivatives of $g$ with respect to the variables $(\widetilde{x_i^k})_{i,k}$ and $(\widetilde{c^k})_k$ has a dominant diagonal. This proves that $\nabla_{(\widetilde{x_i^k})_{i,k}, (\widetilde{c^k})_k} g$ is invertible. We can then use the implicit function theorem: it gives us continuity and derivability of the variables along $u$, see \cite{MMT}. We also get an upper bound on $u-\widetilde{R}$:
\begin{align*}
\left\| (u-\widetilde{R})(t) \right\|_{H^1} 
	& \leq \| (u-R)(t)\|_{H^1} + \sum_{k=1}^K \left\| Q_{c^k}\left( \cdot - \by^k-c^kt\beone \right) - Q_{\widetilde{c^k}} ( \cdot - \by^k -\widetilde{\bx^k}(t)) \right\|_{H^1} \\
	& \leq \| (u-R)(t)\|_{H^1}+ \sum_k \vert \widetilde{\bx^k}(t)-c^k t\beone \vert + \sum_k \vert c^k t- \widetilde{c^k}(t)\vert.  \\
	& \leq C  \| (u-R)(t)\|_{H^1}.
\end{align*}

Now, it suffices to take the scalar product of the equation of the error:
\begin{align*}
d_t w + \partial_1 (\Delta w)= - \partial_1 \left( (\widetilde{R}+w )^p- \sum_{k=1}^K \widetilde{R^k}^p \right) + \sum_{k=1}^K  \left(\widetilde{\bx^k}'(t)-\widetilde{c^k} \beone \right) \cdot \nabla \widetilde{R}^k - \widetilde{c^k}'(t)\Lambda \widetilde{R^k}
\end{align*}
by $\widetilde{Z^j}$  with the orthogonality conditions $\frac{d}{dt}\left( \left\langle w, \widetilde{Z^j} \right\rangle \right)=0$ :
\begin{align*}
\MoveEqLeft
\sum_{k=1}^K \widetilde{c^k}'(t)\langle \Lambda\widetilde{R^k} , \widetilde{Z^j} \rangle- \sum_{k\neq j} ( \widetilde{\bx^k}'(t) -\widetilde{c^k} \beone ) \cdot \langle \nabla\widetilde{R^k}, \widetilde{Z^j} \rangle \\
	& = \left\langle w, \widetilde{c^j}' \Lambda \widetilde{Z^j}+\widetilde{ \bx^j}' \cdot \nabla \widetilde{Z^j} -\partial_1 \Delta \left( \widetilde{Z^j}\right) \right\rangle-\left\langle \partial_1 \left( (\widetilde{R}+w)^p - \sum_{k=1}^K \widetilde{R^k} ^p  \right), \widetilde{Z^j} \right\rangle,
\end{align*}
and similarly by taking the scalar product with $\partial_{i} \widetilde{R^j}$:
\begin{align*}
\MoveEqLeft
\sum_{k=1}^K ( \widetilde{\bx^k}'(t) -\widetilde{c^k} \beone ) \cdot \langle \nabla\widetilde{R^k}, \partial_i \widetilde{R^j} \rangle -\left(\widetilde{\bx^j}' -\widetilde{c^j} \beone \right) \cdot \left\langle w, \nabla \partial_i \widetilde{R^j} \right\rangle- \sum_{k\neq j} \widetilde{c^k}'(t)\langle \Lambda\widetilde{R^k} , \partial_1 \widetilde{R^1} \rangle +\widetilde{c^j}' \left\langle w , \partial_i \Lambda \widetilde{R^j} \right\rangle \\
	& = \left\langle w, \Lambda \widetilde{R^j} \right\rangle - \left\langle \left(\widetilde{R}+w \right)^p -\sum_{k=1}^K \widetilde{R^k}^p  -pw \widetilde{R^j}^{p-1}, \partial_1\partial_i \widetilde{R^j} \right\rangle.
\end{align*}

By taking the inverse of the system, we obtain the estimate of the proposition. One can notice that in the right side of the second equation, the term $\widetilde{\bx^1}'$ can be transformed into $\widetilde{\bx^1}'-\widetilde{c^k} \beone +\widetilde{c^k} \beone$, and the main terms with $\widetilde{\bx^1}'-\widetilde{c^k} \beone$ comes form the scalar product of $\langle \partial_i\widetilde{R^k}, \partial_1 \widetilde{R^1} \rangle$, decreasing $\alpha$ if necessary.
\end{proof}

We now go back to the proposition: once $T_1$ is fixed, if the distance on $[T_1,S_n]$ between $u$ and the sum of the $n$ decoupled solitons is less than $\alpha$, then we can modulate on the whole time inverval $[T_1,S_n]$.

\subsection{Evolution of the masses and energies}

We can assume that $L \ge 1$ ; in the following, the interaction between the different solitons will decay faster than the interaction of a soliton $\widetilde{R^k}$ with a weight function $\phi^{k'}$, and so will be neglected.

\begin{prop}
The following expansion of mass and energy holds for all time $t \in [t^*,S_n]$:
\begin{align} \label{estimeemasse}
\left\vert M^k(t) -\left( \int Q_{c^k}^2 +2 \int w \widetilde{R^k} \phi^k +\int w^2 \phi^k \right)\right\vert \leq  C\sqrt{L} e^{-\frac{1}{L} \frac{ \sigma_0}{4}t}(1+\alpha),
\end{align}
\begin{align}\label{estimeeenergie}
\left\vert E^k(t) - \left( E(Q_{c^k}) -c^k \int w \widetilde{R^k} \phi^k + \frac{1}{2}\int \left( \vert \nabla w \vert^2 -p \widetilde{R^k}^{p-1} w^2 \right) \phi^k  \right) \right\vert \lesssim \sqrt{L} e^{-\frac{1}{L}\frac{\sigma_0}{4} t} + \left( \sqrt{L} e^{-\frac{1}{L}\frac{\sigma_0}{4} t} + \alpha \right) \| w \|_{H^1}^2,
\end{align}
\begin{align}\label{local}
\left\vert \left( E^k(t) +\frac{c^k}{2} M^k(t) \right) -\left( E(Q_{c^k})+ \frac{c^k}{2} \int Q_{c^k}^2 \right) -\frac{1}{2} H_k(t) \right\vert \lesssim \sqrt{L} e^{-\frac{1}{L}\frac{\sigma_0}{4} t} + \left( \sqrt{L} e^{-\frac{1}{L}\frac{\sigma_0}{4} t} + \alpha \right) \| w \|_{H^1}^2,
\end{align}
with
\begin{align*}
H_k(t) :=\int \left( c^k w^2 + \vert \nabla w\vert^2  -p \widetilde{R^k}^{p-1} w^2 \right) \phi^k.
\end{align*}
\end{prop}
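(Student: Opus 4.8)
The plan is to treat all three estimates as localized Taylor expansions of the mass and energy functionals about the single soliton $\widetilde{R^k}$, using the decomposition $u=\widetilde R+w$ from the modulation step together with the bound $\|w\|_{H^1}\lesssim\alpha$ coming from (\ref{proches2}). The mechanism behind every error term is the same: on the effective support of $\phi^k$ only the $k$-th soliton is of order one, while every other $\widetilde{R^l}$ ($l\neq k$) is exponentially small there, and conversely $1-\phi^k=(1-\psi^k)+\psi^{k-1}$ is exponentially small on the bulk of $\widetilde{R^k}$. I would package these two facts from (\ref{interaction}), the exponential decay \eqref{decroitexpo}, and the pointwise bounds (\ref{majoration_psi}) on $\psi$ and its derivatives, since $m^{k}-m^{k-1}\gtrsim\sigma_0 t$ guarantees a separation growing linearly in $t$.

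For the mass (\ref{estimeemasse}) I would simply expand $M^k=\int(\widetilde R+w)^2\phi^k$. Writing $\widetilde R=\sum_l\widetilde{R^l}$, the diagonal quadratic term is $\int(\widetilde{R^k})^2\phi^k$, and I would replace it by $\int Q_{c^k}^2=\int(\widetilde{R^k})^2$ at the cost of $\int(\widetilde{R^k})^2(1-\phi^k)$, which is controlled by balancing the soliton decay $e^{-2\sqrt{c^k}|\cdot|}$ against the cutoff decay $e^{-\frac1L|x_1-m^{k}|}$; choosing $L\ge 1$ makes this $\lesssim\sqrt L\,e^{-\frac1L\frac{\sigma_0}{4}t}$. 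The off-diagonal pure-soliton terms $\int\widetilde{R^k}\widetilde{R^l}\phi^k$ and $\int(\widetilde{R^l})^2\phi^k$ are handled by (\ref{interaction}) (this gives the "$1$" in $1+\alpha$), while the mixed terms $2\int\widetilde{R^l}w\phi^k$ with $l\neq k$ are bounded by $\|w\|_{L^2}\,\|\widetilde{R^l}\phi^k\|_{L^2}\lesssim\alpha\,e^{-\frac1L\frac{\sigma_0}{4}t}$ (this gives the "$\alpha$"), leaving exactly $\int Q_{c^k}^2+2\int w\widetilde{R^k}\phi^k+\int w^2\phi^k$.

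For the energy (\ref{estimeeenergie}) I would Taylor expand $u^{p+1}=\widetilde R^{p+1}+(p+1)\widetilde R^p w+\tfrac{(p+1)p}{2}\widetilde R^{p-1}w^2+O(w^3)$ and split $|\nabla u|^2$ analogously, again localizing $\widetilde R$ to $\widetilde{R^k}$ on $\operatorname{supp}\phi^k$. The constant term yields $E(Q_{c^k})$ after removing $\phi^k$. The crucial step is the linear-in-$w$ term $\int(\nabla\widetilde{R^k}\cdot\nabla w-(\widetilde{R^k})^p w)\phi^k$: integrating by parts gives $-\int w\,\phi^k(\Delta\widetilde{R^k}+(\widetilde{R^k})^p)-\int w\,\nabla\phi^k\cdot\nabla\widetilde{R^k}$, where the last term is exponentially small because $\nabla\phi^k$ is supported near the transitions by (\ref{majoration_psi}) while $\nabla\widetilde{R^k}$ is tiny there, and the elliptic equation \eqref{elliptic} turns $\Delta\widetilde{R^k}+(\widetilde{R^k})^p$ into $c^k\widetilde{R^k}$, producing precisely $-c^k\int w\widetilde{R^k}\phi^k$. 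The quadratic term gives $\tfrac12\int(|\nabla w|^2-p(\widetilde{R^k})^{p-1}w^2)\phi^k$, and the cubic and higher remainders are absorbed into $(\sqrt L e^{-\frac1L\frac{\sigma_0}{4}t}+\alpha)\|w\|_{H^1}^2$ using $\|w\|_{L^\infty}\lesssim\|w\|_{H^1}\lesssim\alpha$.

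Finally, (\ref{local}) follows by adding $\tfrac{c^k}{2}$ times (\ref{estimeemasse}) to (\ref{estimeeenergie}): the two linear contributions $-c^k\int w\widetilde{R^k}\phi^k$ and $\tfrac{c^k}{2}\cdot 2\int w\widetilde{R^k}\phi^k$ cancel exactly, the constants combine into $E(Q_{c^k})+\tfrac{c^k}{2}\int Q_{c^k}^2$, and the remaining quadratic terms assemble into $\tfrac12\int(c^kw^2+|\nabla w|^2-p(\widetilde{R^k})^{p-1}w^2)\phi^k=\tfrac12 H_k(t)$. I expect the main obstacle to be not the algebra but the bookkeeping of the localization estimates: verifying that every discarded cross term (soliton-soliton, soliton-cutoff-tail, and cutoff-derivative) decays with the uniform rate $\tfrac1L\frac{\sigma_0}{4}$ and the stated $\sqrt L$ prefactor, which requires carefully tracking the competition between the soliton decay rate $\sqrt{c^k}$ and the cutoff rate $\tfrac1L$ and using $m^{k}-m^{k-1}\gtrsim\sigma_0 t$ in the right slab for each $l\neq k$.
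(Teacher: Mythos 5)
Your proposal follows essentially the same route as the paper: expand $M^k$ and $E^k$ in powers of $w$ around $\widetilde R$, localize $\widetilde R$ to $\widetilde{R^k}$ on the support of $\phi^k$ via the interaction estimate \eqref{interaction} and the decay of the cut-off, use the elliptic equation \eqref{elliptic} after integration by parts to produce the linear term $-c^k\int w\widetilde{R^k}\phi^k$, and obtain \eqref{local} as the linear combination $E^k+\frac{c^k}{2}M^k$ in which the linear terms cancel. One point needs correcting: you justify the absorption of the cubic and higher remainders by the embedding $\|w\|_{L^\infty}\lesssim\|w\|_{H^1}$, which is false for $d=2,3$. The paper instead uses $H^1(\mathbb{R}^2)\hookrightarrow L^q(\mathbb{R}^2)$ for finite $q\geq 2$ and $H^1(\mathbb{R}^3)\hookrightarrow L^3(\mathbb{R}^3)$ to get $\vert\int w^j\phi^k\vert\leq C\|w\|_{H^1}^j$ for $3\leq j\leq p+1$, after which $\|w\|_{H^1}^{j}\leq\alpha^{j-2}\|w\|_{H^1}^2$ gives the stated bound; with this substitution your argument goes through.
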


Observe that, for $k \neq 1$:
\begin{align*}
\int (\vert \widetilde{R^1} \vert + \vert \nabla \widetilde{R^1} \vert ) \phi^k \lesssim  e^{-\frac{1}{2}\sigma_0 t} + \sqrt{L} e^{-\frac{1}{L} \frac{ \sigma_0}{4}t} \text{ and } \int \widetilde{R^1}(1-\phi^1) \lesssim e^{-\frac{1}{2}\sigma_0 t} +  \sqrt{L} e^{-\frac{1}{L} \frac{ \sigma_0}{4}t}.
\end{align*}
This can be seen by considering the mass of $\phi^k$ is mainly far from the one of $\widetilde{R^1}$, cutting for example at the middle between $m^1$ and the center of the soliton $\widetilde{R^1}$. This estimate is also true for the other solitons $\widetilde{R^k}$.

\begin{proof}
The first estimate comes from a classical development of the mass; the second estimate:
\begin{align*}
\MoveEqLeft
\left\vert E^k(t) - \left( E(Q_{c^k}) + \int \left( \nabla \widetilde{R} \cdot \nabla w - w \vert \widetilde{R} \vert^p \right) \phi^k +\int \frac{1}{2} \nabla w \cdot \nabla w -\frac{1}{p+1}\left( (\widetilde{R} +w)^{p+1}- \widetilde{R}^{p+1} -(p+1)w \widetilde{R}^p \right)\phi^k  \right) \right\vert \\
	& \leq C e^{-\frac{1}{2}\sigma_0 \sqrt{\sigma_0}t} + C \int w^2 e^{- \frac{1}{2} \sigma_0 \sqrt{\sigma_0}t}\phi^k.
\end{align*}
The only terms at which we should take care about to prove (\ref{estimeeenergie}) are the higher power of $w$, for $3 \leq j \leq p+1$, and we use as in \cite{CMPS} the Sobolev embeddings $H^1(\mathbb{R}^2) \hookrightarrow L^q(\mathbb{R}^2)$ for $q\geq 2$ and $H^1(\mathbb{R}^3)\hookrightarrow L^3(\mathbb{R}^3)$:
\begin{align*}
\left\vert \int w^j \phi^k \right\vert \leq C \| w \|_{H^1}^j.
\end{align*}

\end{proof}

Now that we have obtained an estimate on the linearised operators $H_k$ around each of the solitons, we need to bound the evolutions of the energies and the masses. The following lemma expresses the (almost) monotonicity of the different quantities. Some estimates mimic those obtained in \cite{CMPS}.

\begin{lemm} \label{lem:mono}
There exists $L>0$ and $T_2>0$ large enough, such that for all $t^*>T_2$, the following evolutions of the mass and modified energy hold, for all $\kappa \in \llbracket 1,K \rrbracket$, $t \in [t^*, S_n]$:
\begin{align} \label{mass}
\sum_{k=1}^\kappa \left( M^k(S_n)- M^k(t) \right) \geq - C L e^{-\frac{1}{L} \frac{\sigma_0}{4} t}, \\
\sum_{k=1}^\kappa \left( \widetilde{E^k} (S_n) -\widetilde{E^k}(t) \right) \geq - C L e^{-\frac{1}{L} \frac{\sigma_0}{4} t}. \label{energy}
\end{align}
\end{lemm}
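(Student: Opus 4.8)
The plan is to prove the two monotonicity estimates \eqref{mass} and \eqref{energy} by the standard Kato--type argument adapted to the multi-soliton setting, exploiting the exponential weights $\phi^k$ and the smallness parameter $1/L$. First I would compute the time-derivative of the partial sum of local masses. Writing $\Psi^\kappa(t):=\sum_{k=1}^\kappa \phi^k = \psi^\kappa(t,x_1)$ (a telescoping sum, so only the outer cut-off survives), I have $\frac{d}{dt}\sum_{k=1}^\kappa M^k(t)=\frac{d}{dt}\int u^2 \Psi^\kappa$. Using the equation $\partial_t u + \partial_1(\Delta u + u^p)=0$ and integrating by parts, the derivative splits into a term coming from $\partial_t \Psi^\kappa = -\dot m^\kappa \psi'(x_1-m^\kappa)$ and terms coming from the flux $\partial_1(\Delta u + u^p)$. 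The key algebraic point is that, after integration by parts, one obtains a leading quadratic form of the schematic shape
\begin{align*}
\frac{d}{dt}\int u^2 \Psi^\kappa = -\int \Big( (\dot m^\kappa) u^2 + 3(\partial_1 u)^2 + |\nabla u|^2 + \text{(nonlinear)} \Big)\, \psi'(x_1 - m^\kappa)\, d\bx + \text{interaction},
\end{align*}
and since $\psi'\le 0$ while $\dot m^\kappa\approx \tfrac{c^{\kappa+1}+c^\kappa}{2}>0$ exceeds the local soliton speed, the gradient terms combine with $(\dot m^\kappa) u^2$ into a sign-definite (nonnegative) contribution, up to the nonlinear and interaction remainders.

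The second step is to control those remainders. The nonlinear term $\int u^{p+1}\psi'(x_1-m^\kappa)$ is handled by splitting the region near each soliton center from the far region: near $\widetilde{R^k}$ the factor $\psi'(x_1-m^\kappa)$ is exponentially small (the cut-off transition sits at distance $\gtrsim \sigma_0 t$ from every center), giving a bound $C\sqrt L\, e^{-\frac1L\frac{\sigma_0}{4}t}$; away from the centers $u$ itself is small and one absorbs the cubic/quartic term into the good quadratic part using the bound $\|w\|_{H^1}\le C\alpha$ from \eqref{proches2} together with the Sobolev estimates already invoked for \eqref{estimeeenergie}. The crucial structural inequality is the first relation in \eqref{majoration_psi}, $|\psi'|\le \frac1L\psi$, which lets the single undesirable term proportional to $\psi'$ be dominated, for $L$ large, by the coercive combination; the $\dot m^\kappa$ factor and the choice of cut-off location are what make $\dot m^\kappa - c^k \gtrsim \sigma_0$. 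Integrating the resulting differential inequality $\frac{d}{dt}\sum_{k\le\kappa}M^k \le C e^{-\frac1L\frac{\sigma_0}{4}t}$ from $t$ to $S_n$ and recalling $\int_t^{S_n} e^{-\frac1L\frac{\sigma_0}{4}s}\,ds \le \frac{4L}{\sigma_0}e^{-\frac1L\frac{\sigma_0}{4}t}$ yields exactly \eqref{mass} with the factor $CL$.

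For the modified energy \eqref{energy} I would repeat the computation on $\widetilde{E^k}=E^k+\frac{\sigma_0}{4}M^k$. The energy density evolution produces, after integration by parts, a quadratic form in $\nabla u$ and $\partial_1\nabla u$ weighted by $\psi'(x_1-m^\kappa)$, which is \emph{not} obviously sign-definite on its own; this is precisely why the modification by $\frac{\sigma_0}{4}M^k$ is introduced. Adding $\frac{\sigma_0}{4}$ times the mass computation supplies a $+\frac{\sigma_0}{4}(\dot m^\kappa)u^2\psi'$-type term that, combined with the energy's $(\partial_1 u)^2$ and higher-derivative contributions, restores a definite sign (this is the standard $c^k+\frac{\sigma_0}{4}$ shift making the full density coercive). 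The same far/near splitting and the $|\psi^{(3)}|\le \frac1{L^2}|\psi'|$ bound from \eqref{majoration_psi} control the third-derivative remainder generated by the Laplacian flux, and the nonlinear terms are estimated as before. I expect the main obstacle to be this sign-definiteness of the energy flux: unlike the mass, whose monotonicity follows from a clean quadratic form, the energy requires carefully tracking the interplay between the $|\nabla u|^2$ and $(\partial_1 u)^2$ coefficients and verifying that the $\frac{\sigma_0}{4}M^k$ correction (rather than $\frac{c^k}{2}$) is the right weight to guarantee positivity uniformly in $\kappa$, while simultaneously keeping all interaction terms between distinct solitons — bounded via \eqref{interaction} by $e^{-\sqrt{\sigma_0}\frac{|c^{k+1}-c^k|}{2}t}$ — strictly below the retained monotone quantity. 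Once positivity is secured, the integration and the $\int_t^{S_n}$ estimate are identical to the mass case.
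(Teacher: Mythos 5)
Your proposal follows essentially the same route as the paper: the explicit computation of $\frac{d}{dt}\sum_{k\le\kappa}M^k$ and $\frac{d}{dt}\sum_{k\le\kappa}\widetilde{E^k}$ against the moving weight $\psi^\kappa$, the splitting between the transition region $I_t^\kappa$ (where $u$ is small because $R$ collapses there) and its complement (where $\partial_1\psi^\kappa$ is exponentially small) to absorb the nonlinear terms, the compensation of the lower-order remainders of the energy flux by $\frac{\sigma_0}{4}$ times the mass identity, and integration from $t$ to $S_n$. The only small imprecision is that the operative bound for the $\partial_1^3\psi^\kappa$ remainder (which already appears in the mass computation, not only in the energy one) is the second relation of \eqref{majoration_psi}, namely $\vert\psi^{(3)}\vert\le L^{-2}\vert\psi'\vert$ combined with $L^{-2}\le\frac{\sigma_0}{4}$, rather than $\vert\psi'\vert\le L^{-1}\psi$.
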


\begin{proof}
By computing the evolution of the mass for $\kappa \leq K-1$, the definition of $\sigma_0$, the decreasing function $\psi$, (\ref{majoration_psi}) and $\frac{1}{L^2} \leq \frac{\sigma_0}{4}$:
\begin{align}
\frac{d}{dt} \left( \sum_{k=1}^\kappa M^k \right)
	& =\int \left(  \frac{2p}{p+1} u^{p+1} - \vert \nabla u \vert^2 -2 (\partial_1 u)^2 -\frac{c^{\kappa +1}+c^\kappa}{2}u^2 \right) \partial_1 \psi^\kappa +\int u^2 \partial_1^3 \psi^\kappa \label{derivative_mass}\\
	& \geq \int \left( -\frac{2p}{p+1}u^{p+1} + \vert \nabla u \vert^2 +2 (\partial_1 u)^2 + 3 \frac{\sigma_0}{4} u^2 \right) \vert \partial_1 \psi^\kappa \vert . \nonumber
\end{align}

Only the term $u^{p+1}$ due to the non-linearity is non-positive. We deal with it by separating the integral into different pieces, according to the position of $x_1$. We use a parameter $R_0$ to adapt later, and an interval centred in $m^\kappa$: $I_t^\kappa=[c^\kappa t +y_1^\kappa+R_0 ; c^{\kappa+1} t +y_1^{\kappa+1} -R_0]$.

Firstly, consider that we are far from the variation of $\psi$,  so $x_1 \in {I_t^\kappa}^C$, we obtain :
\begin{align*}
\left\vert \psi'(x_1-m^{\kappa}(t) ) \right\vert \leq \frac{1}{L} C_0 e^{-\frac{1}{L}\frac{\sigma_0}{4} t}
\end{align*}
with the condition $T_2 \geq \frac{2}{\sigma_0}\sup \limits_{\kappa} \left\vert R_0 -\frac{1}{2}\left( y_1^{\kappa+1}-y_1^\kappa\right) \right\vert$. Moreover, the assumption (\ref{hypo}), and the Sobolev embeddings $H^1(\mathbb{R}^d) \hookrightarrow L^{p+1}(\mathbb{R}^d)$, for $(d,p)=(2,2)$, $(2,3)$ or $(3,2)$ with a constant $C_d$ :
\begin{align*}
\int_{{I_t^\kappa}^C}-\frac{2p}{p+1}u^{p+1}(t)\vert \partial_1 \psi^\kappa (t) \vert \geq -\frac{2p}{p+1} C_d^{p+1} \left( \| R(t) \|_{H^1} + \alpha \right)^{p+1} C_0\frac{1}{L} e^{-\frac{1}{L}\frac{\sigma_0}{4} t}.
\end{align*}

Secondly, the part where $\partial_1 \psi^\kappa$ can not come to the rescue uses that $u$ is close to $R$, and $R$ collapses on $I_t^\kappa$:
\begin{align*}
\MoveEqLeft
\int_{I_t^\kappa} -\frac{2p}{p+1} u^{p+1} \vert \partial_1 \psi^\kappa \vert +\int \vert \nabla u \vert^2 \vert \partial_1 \psi^\kappa \vert +\frac{3}{4} \sigma_0 u^2 \vert \partial_1 \psi^\kappa \vert  \\
	& \geq- \frac{2p}{p+1}C_d^{p+1} \left( \|u-R \|_{H^1} + \| R \|_{H^1(x_1 \in I_t^\kappa)} \right) ^{p-1} \left\| u \vert \partial_1 \psi^\kappa \vert^\frac{1}{2} \right\|_{H^1}^{2} +\int \vert \nabla u \vert^2 \vert \partial_1 \psi^\kappa \vert +\frac{3}{4} \sigma_0  \int u^2 \vert \partial_1 \psi^\kappa \vert \\
	& \geq \left( - \frac{2p}{p+1}C_d^{p+1} \left( \alpha+ \| R \|_{H^1(x_1 \in I_t^\kappa)} \right)^{p-1} +\min \left(1, \frac{3}{4} \sigma_0 \right) \right) \int \left( \vert \nabla u \vert^2 + u^2 \right) \vert \partial_1 \psi^\kappa \vert.
\end{align*}
By choosing $\alpha$ small enough depending on $\sigma_0$, and $T_2$ and $R_0$ large enough such that the constant on the right hand side is positive, we thus obtain:
\begin{align*}
\int \left( -\frac{2p}{p+1}u^{p+1} + \vert \nabla u \vert^2 +2 (\partial_1 u)^2 + 3 \frac{\sigma_0}{4} u^2 \right) \vert \partial_1 \psi^\kappa\vert  \geq -Ce^{-\frac{1}{L} \frac{1}{4}\sigma_0 t}.
\end{align*}
An integration from $t$ to $S_n$ concludes the first estimate on the mass.

Now, we compute the derivative of the sum of energy, and we remember that $-\partial_1 \psi^k >0$:
\begin{align}
\frac{d}{dt}\sum_{k=1}^\kappa E^k & = - \frac{1}{2} \int \left( \Delta u + u^p \right)^2 \partial_1 \psi^\kappa - \int \vert \partial_1 \nabla u \vert ^2 \partial_1 \psi^\kappa  + \frac{1}{2} \int \vert \partial_1 u \vert ^2 \partial_1^3 \psi^\kappa \nonumber  \\
		&  \quad \quad + p \int \vert \partial_1 u \vert ^2 u^{p-1} \partial_1 \psi^\kappa  + \int \left( \frac{1}{2} \vert \nabla u \vert^2 - \frac{1}{p+1} \vert u \vert^{p+1} \right) \frac{d}{dt}\psi^\kappa \label{derivative_eie} \\
	& \geq -\frac{1}{2L^2} \int \vert \partial_1 u \vert^2 \vert \partial_1 \psi^\kappa \vert - p \int \vert \partial_1 u \vert^2 \vert u \vert^{p-1} \vert \partial_1 \psi^\kappa \vert -\frac{1}{p+1} \frac{c^\kappa+ c^{\kappa-1}}{2}  \int \vert u \vert^{p+1} \vert \partial_1 \psi^\kappa \vert. \nonumber
\end{align}
We then use the same tools for $\widetilde{E^k}$ that we used for the mass. The first term is compensate by the ones of mass, while $L$ is large enough. The second is dealt again by considering separately $I_t^\kappa$ and ${I_t^\kappa}^C$:
\begin{align*}
\int_{{I_t^k}^C} \vert \partial_1 u(t) \vert^2 \vert u(t) \vert^{p-1} \vert \partial_1 \psi^\kappa(t) \vert \leq \| u (t)\|_{L^\infty}^{p-1} \left( \alpha^2 + \int \left( \partial_1 R(t) \right)^2 \right)\frac{1}{L}e^{-\frac{1}{L}\frac{\sigma_0}{4}t} \leq C \frac{1}{L}e^{-\frac{1}{L}\frac{\sigma_0}{4}t},
\end{align*}
and
\begin{align*}
\int_{I_t^k} \vert \partial_1 u \vert^2 \vert u \vert^{p-1} \vert \partial_1 \psi^\kappa \vert \leq C_d^{p-1} \| u \|_{H^2(I_t^k)}^{p-1} \int \vert \partial_1 u \vert^2 \vert \partial_1 \psi^\kappa \vert.
\end{align*}
Increasing $R_0$ if necessary to diminish $\| u \|_{H^2(I_t^k)}$, this term will also be compensated by the mass. The last one has already been done and uses exactly the same tools. This concludes the estimate for $\widetilde{E^k}$.
\end{proof}

\subsection{Proof of the exponential decay (\ref{decay})}

Recall the bound (\ref{hypo}) for $t \in [t^*,S_n]$:
\begin{align*}
\| u(t) - R(t) \|_{H^1} \leq \alpha,
\end{align*}
and recall the expression of the linearised operator localised around each soliton:
\begin{align*}
H_k(t) := \int \left( \vert \nabla w \vert^2 -p \widetilde{R^k}^{p-1} w^2 +c^kw^2 \right) \phi^k.
\end{align*}

Using the local evolutions of mass and energy (\ref{local}), we obtain for any sequence $(b_k)$ of scalar:
\begin{align}
\MoveEqLeft
\left\vert \sum_{k=1}^K b^k \left( E^k(t)+\frac{c^k}{2}M^k(t) \right) -\sum_{k=1}^K b^k \left( E^k(S_n)+ \frac{c^k}{2}M^k(S_n)\right) - \frac{1}{2}\sum_{k=1}^K b^k H_k(t) \right\vert \nonumber\\
	& \lesssim \sqrt{L} e^{-\frac{1}{L}\frac{\sigma_0}{4} t} + \left( \sqrt{L} e^{-\frac{1}{L}\frac{\sigma_0}{4} t} + \alpha \right) \| w \|_{H^1}^2.\label{global}
\end{align}

Let us motivate our choice of the coefficients $b^k:=\frac{1}{(c^k)^2}$. In order to use the mass monotonicity property, we develop the sum:
\begin{align*}
\sum_{k=1}^K \frac{b^kc^k}{2}M^k(t) = b^Kc^K \sum_{k=1}^K \frac{M^k(t)}{2} + \sum_{\kappa =2}^K \left( b^{\kappa-1} c^{\kappa-1} - b^{\kappa}c^{\kappa} \right) \sum_{k=1}^{\kappa-1} \frac{M^{k}(t)}{2},
\end{align*}
and for it to apply, each term has to come with a non negative coefficient: we see the constraint of $(b^kc^k)_k$ to be non increasing. We could choose, for example, $b^k=\frac{1}{c^k}$. Then, we obtain by the same formula a decomposition of the energy : $\sum\limits_k b^k\left(E^k(t)-E^k(S_n)\right)$. This is not suitable, because the energies $\sum \limits_{k=1}^\kappa E^k(t)$ do not satisfy the monotonicity property, as seen in Lemma \ref{lem:mono} above. To circumvent it, we wish to consider instead the modified energies $\widetilde{E^k}$ by adding some mass and we can now use the monotonicity of $\sum \limits_{k=1}^\kappa \widetilde{E^k}(t)$. It implies the constraint that the sequence $(b^kc^k)_k$ to be stricly decreasing, and our choice of $b^k:=\frac{1}{(c^k)^2}$ comes from there (recall that the $c^k$ are increasing order). We then obtain the following expansion:
\begin{align*}
\MoveEqLeft
\sum_{k=1}^K \frac{1}{(c^k)^2} \left( E^k + \frac{c^k}{2}M^k \right) \\
	&  =\frac{1}{(c^K)^2}\sum_{k=1}^K \widetilde{E^k} + \left( \frac{1}{2 c^K} -\frac{\sigma_0}{4}\frac{1}{(c^K)^2} \right)\sum_{k=1}^K M^k  \\
		& \quad + \sum_{\kappa=1}^K \left[ \left( \frac{1}{(c^\kappa)^2} -\frac{1}{(c^{\kappa+1})^2} \right) \sum_{k=1}^\kappa \widetilde{E^k} + \left( \frac{1}{2} \left( \frac{1}{c^\kappa} -\frac{1}{c^{\kappa+1}} \right) -\frac{\sigma_0}{4} \left( \frac{1}{(c^\kappa)^2}- \frac{1}{(c^{\kappa+1})^2} \right) \right) \sum_{k=1}^\kappa M^k \right].
\end{align*}

By the choice of $\frac{\sigma_0}{4}$ in $\widetilde{E^k}$, we observe that the coefficients in front of each sum of $\widetilde{E^k}$ and of $M^k$ are positive, we can thus use the monotonicity properties (\ref{mass}) and (\ref{energy}). (\ref{global}) gives us:
\begin{align}\label{grand}
\sum_{k=1}^K \frac{1}{(c^k)^2} H_k(t) \leq C \sqrt{L} e^{-\frac{1}{L}\frac{\sigma_0}{4} t} +C \left( \sqrt{L} e^{-\frac{1}{L}\frac{\sigma_0}{4} t} + \alpha \right) \| w \|_{H^1}^2
\end{align}

A lower bound of the left hand side uses the coercivity property, given in the following lemma, with the proof postponed to the appendix. It generalizes the classic coercivity of this quantity around one soliton.

\begin{lemm}\label{coercivity_lemm}
For subcritical and critical (\ref{ZK}), there exists $C_0 >0$, such that:
\begin{align}\label{coercivity}
C_0 \| w \|_{H^1}^2 - \frac{1}{C_0} \sum_{k=1}^K \left( \int \widetilde{R^k}w \right)^2 \leq \sum_{k=1}^K \frac{1}{(c^k)^2} H_k(t).
\end{align}
\end{lemm}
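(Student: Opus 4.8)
The strategy is to reduce the global coercivity statement to the single-soliton coercivity already recorded in Proposition \ref{propo_Z}(4), exploiting the localization provided by the cut-off functions $\phi^k$. The key point is that the weights $\phi^k$ form a partition of unity, $\sum_k \phi^k \equiv 1$, and that each $\phi^k$ is essentially supported near the $k$-th soliton while decaying exponentially away from it. First I would rewrite $\sum_k \frac{1}{(c^k)^2} H_k(t)$ by inserting, for each $k$, the rescaling that maps $Q_{c^k}$ to $Q=Q_1$. Since $Q_{c^k}(\bx) = (c^k)^{1/(p-1)} Q(\sqrt{c^k}\,\bx)$ and $H_k$ is built from $\mathcal L_{c^k}$ applied to $w$ weighted by $\phi^k$, a change of variables turns each $H_k$ into (a multiple of) $(\mathcal L v^k, v^k)$ for a rescaled remainder $v^k$ localized around the origin, up to errors controlled by the exponential decay of $\phi^k$ and of $Q_{c^k}$ away from the soliton center.

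Next I would introduce the localized pieces $w^k := w\sqrt{\phi^k}$ (or $w\phi^k$), so that $\sum_k \int |\nabla w^k|^2 \approx \int |\nabla w|^2$ up to commutator terms involving $\nabla\sqrt{\phi^k}$, which are of size $O(1/L)$ by the estimate $|\psi'| \le \frac1L \psi$ in \eqref{majoration_psi}. The nonlinear potential term $p\widetilde{R^k}^{p-1}w^2 \phi^k$ is genuinely localized near the $k$-th soliton because $\widetilde{R^k}$ decays exponentially, so cross terms between distinct solitons are negligible by \eqref{interaction}. With this localization, each $H_k$ is bounded below by the single-soliton quadratic form $(\mathcal L_{c^k} w^k, w^k)$ up to an error $O(1/L)\|w\|_{H^1}^2$ coming from the cut-off derivatives; crucially, choosing $L$ large absorbs these commutator errors into the coercivity constant.

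Then I would apply Proposition \ref{propo_Z}(4) to each localized piece. For the subcritical cases this requires $w^k \perp \partial_i Q_{c^k}$ and $w^k \perp Q_{c^k}$; the first orthogonalities follow (approximately) from the modulation conditions \eqref{ortho1}, while the $Q_{c^k}$-direction must be handled by subtracting the projection, which is exactly what produces the negative term $-\frac{1}{C_0}\sum_k(\int \widetilde{R^k}w)^2$ on the left-hand side. For the critical case \gls{ZK23} one needs in addition $w^k\perp Z_{c^k}$, supplied by \eqref{ortho2}. The coercivity on the orthogonal complement then gives $(\mathcal L_{c^k}w^k,w^k)\ge \lambda_{ess}\|w^k\|_{H^1}^2$, and summing over $k$ with the (positive) weights $\frac{1}{(c^k)^2}$ and reassembling $\sum_k\|w^k\|_{H^1}^2 \gtrsim \|w\|_{H^1}^2$ yields the claim. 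The main obstacle I anticipate is the bookkeeping of the localization errors: the cut-offs $\phi^k$ do not make $w^k$ exactly orthogonal to the soliton directions (the orthogonality \eqref{ortho1} is against $\partial_i\widetilde{R^k}$, not against $\partial_i(\widetilde{R^k}\phi^k)$), so one must show these defects are $O(e^{-\sigma t})$ or $O(1/L)$ and small enough to be absorbed. Controlling precisely how the parameter $L$ trades off against the coercivity constant $\lambda_{ess}$—ensuring the commutator terms stay below the spectral gap—is the delicate quantitative heart of the argument, and is presumably why the detailed proof is deferred to the appendix.
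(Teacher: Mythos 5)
Your proposal is correct and follows essentially the same route as the paper's appendix proof: localize via $w\sqrt{\phi^k}$, decompose along $\widetilde{R^k}$ and $\partial_i\widetilde{R^k}$ to invoke the single-soliton coercivity (with the extra $Z$-direction in the critical case), absorb the cut-off commutators by taking $L$ large, and recover the negative term from the unconstrained $Q_{c^k}$-projection while the $\partial_i\widetilde{R^k}$-defects are controlled using \eqref{ortho1} and the exponential smallness of $\widetilde{R^k}(1-\sqrt{\phi^k})$. The only cosmetic difference is that you phrase the single-soliton step via rescaling to $Q=Q_1$, whereas the paper works directly with the bilinear form $B_k$ at scale $c^k$.
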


We now turn our attention to the terms $\int \widetilde{R^k} w$. The key to estimate this term is the appearance of a different sign in the development of the mass and the energy in (\ref{estimeemasse}) and in (\ref{estimeeenergie}). We then combine with the monotonicity. A first estimate comes from mixing (\ref{estimeemasse}) and (\ref{mass}):
\begin{align*}
2 \int w(t) \widetilde{R^1}(t)\phi^1(t) & \leq M_1(t) -M_1(S_n) - \int w^2(t) \phi^k(t) + C \sqrt{L}e^{-\frac{1}{L}\frac{\sigma_0}{4}t} \\
	& \leq \| w(t) \|_{H^1}^2 + CLe^{-\frac{1}{\sqrt{L}}\frac{\sigma_0}{4}t}.
\end{align*}

Because of the lack  of monotonicity for $E^1$, we use again the mass $M^1$ to find back $\widetilde{E^1}$, and we obtain the other inequality mixing (\ref{estimeeenergie}) and (\ref{energy}):
\begin{align*}
\left( c^1- \frac{\sigma_0}{8} \right) \int w(t) \widetilde{R^1}(t) \phi^1 (t) & \geq -\widetilde{E^1}(t) + \widetilde{E^1}(S_n) - C\| w(t) \|_{H^1}^2 \left( 1+ \sqrt{L}e^{-\frac{1}{L}\frac{\sigma_0}{4}t} \right) -C\sqrt{L} e^{-\frac{1}{L}\frac{\sigma_0}{4}t} \\
	& \geq- C\| w(t) \|_{H^1}^2 \left( 1+ \sqrt{L}e^{-\frac{1}{L}\frac{\sigma_0}{4}t} \right) -C\sqrt{L} e^{-\frac{1}{L}\frac{\sigma_0}{4}t}.
\end{align*}
For the other $\int w \widetilde{R^k}\phi^k$, we proceed by induction using again the monotonicity. We conclude then the estimates:
\begin{align}\label{une_direction}
\left\vert \int w(t) \widetilde{R^k}(t)\phi^k(t) \right\vert \lesssim \sqrt{L}e^{-\frac{1}{L}\frac{\sigma_0}{4}t} + \| w(t) \|_{H^1}^2.
\end{align}
(\ref{grand}) can now bound the $H^1$-norm of the error into:
\begin{align*}
\lambda_0 \| w(t) \|_{H^1}^2 \lesssim \sqrt{L}e^{-\frac{1}{L}\frac{\sigma_0}{4}t} +\left( \sqrt{L}e^{-\frac{1}{L} \frac{\sigma_0}{4}t} + \alpha \right) \|w(t) \|_{H^1}^2.
\end{align*}
Taking $T_0\geq \max (T_1,T_2)$ large enough, and $\alpha$ small enough, we infer
\[ \| w \|_{H^1} \lesssim \sqrt{L} e^{-\frac{1}{L} \frac{\sigma_0}{8}t}. \]
Now we can improve the bound on the parameter of modulation, by integrating (\ref{proches}) from $t$ to $S_n$, and $ \vert \widetilde{\bx^k}(S_n) -c^k S_n \beone \vert = \widetilde{c^k}(S_n) -c^k S_n =0$ :
\begin{align*}
\sum_{k=1}^K \vert \widetilde{\bx^k}(t) -c^k t \beone \vert + \vert \widetilde{c^k}(t) -c^k t \vert \lesssim C L^\frac{3}{2} e^{-\frac{1}{L}\frac{\sigma_0}{8}t}.
\end{align*}

We then conclude using the bounds from the modulation:
\begin{align*}
\| (u-R)(t) \|_{H^1} \leq \|w(t) \|_{H^1} + \| R(t)-\widetilde{R}(t) \|_{H^1} \leq C L^\frac{3}{2}e^{-\frac{1}{L}\frac{\sigma_0}{8} t}.
\end{align*}
By taking $A_1$ depending on $L$, and finally $T_0$ large enough, we have proved (\ref{decay}). This concludes the proof of Proposition \ref{propo5}.

\bigskip

A straightforward bootstrap argument shows that the minimal time $t^*$ down to which (\ref{decay}) holds is actually $t^*=\max(T_0,T_n^*)$, that is:

\begin{coro}
For all $t \in [(\max(T_0,T_n^*),S_n]$, there hold
\[ \| u(t) - R(t) \|_{H^1} \leq A_1e^{-\frac{1}{L}\frac{\sigma_0}{8} t}. \]
\end{coro}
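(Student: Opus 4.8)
The plan is to run a standard continuity (bootstrap) argument, using Proposition \ref{propo5} as the improving mechanism and the strict gain from the threshold $\alpha$ down to $\alpha/2$ to open up the interval on which the smallness hypothesis (\ref{hypo}) propagates. First I would record that $t \mapsto \|u_n(t) - R(t)\|_{H^1}$ is continuous on $(T_n^*, S_n]$, since $u_n \in \mathcal C((T_n^*, S_n], H^1)$ and $R \in \mathcal C(\mathbb R, H^1)$. At the endpoint $u_n(S_n) = R(S_n)$, so $\|u_n(S_n) - R(S_n)\|_{H^1} = 0 < \alpha$, and by continuity (\ref{hypo}) holds on a nontrivial subinterval $[t^*, S_n]$. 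Hence the set
\[ \mathcal I := \{ t^* \in (\max(T_0, T_n^*), S_n] : \sup_{t \in [t^*, S_n]} \|u_n(t) - R(t)\|_{H^1} \leq \alpha \} \]
is nonempty, and I would set $t^\dagger := \inf \mathcal I \geq \max(T_0, T_n^*)$. Since any $t^* \in \mathcal I$ forces every larger time into $\mathcal I$, the set $\mathcal I$ is an interval with left endpoint $t^\dagger$.

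Next I would argue by contradiction that $t^\dagger = \max(T_0, T_n^*)$. Suppose instead $t^\dagger > \max(T_0, T_n^*)$. By continuity and the definition of the infimum, (\ref{hypo}) still holds on the closed interval $[t^\dagger, S_n]$, and since $t^\dagger \in (\max(T_0, T_n^*), S_n)$ I may apply Proposition \ref{propo5} with $t^* = t^\dagger$. This yields the improved decay (\ref{decay}) on $[t^\dagger, S_n]$; evaluating at $t = t^\dagger$ and using $t^\dagger \geq T_0$ together with the defining constant relation $A_1 e^{-\frac{1}{L}\frac{\sigma_0}{8} T_0} \leq \frac{\alpha}{2}$ gives
\[ \|u_n(t^\dagger) - R(t^\dagger)\|_{H^1} \leq A_1 e^{-\frac{1}{L}\frac{\sigma_0}{8} t^\dagger} \leq A_1 e^{-\frac{1}{L}\frac{\sigma_0}{8} T_0} \leq \frac{\alpha}{2}. \]
By continuity of the error at $t^\dagger$, there is $\varepsilon > 0$ with $\|u_n(t) - R(t)\|_{H^1} \leq \alpha$ for all $t \in [t^\dagger - \varepsilon, t^\dagger]$; combined with the bound already valid on $[t^\dagger, S_n]$ this shows $t^\dagger - \varepsilon \in \mathcal I$, contradicting $t^\dagger = \inf \mathcal I$. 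Hence $t^\dagger = \max(T_0, T_n^*)$, so (\ref{hypo}) holds on all of $(\max(T_0, T_n^*), S_n]$. For each fixed $t$ in this interval I then pick any $t^* \in (\max(T_0, T_n^*), t]$ and apply Proposition \ref{propo5} once more to conclude (\ref{decay}) at $t$, which establishes the corollary.

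The argument is essentially routine, and I do not expect a genuine obstacle; the only delicate points are bookkeeping ones. I must ensure the starting interval near $S_n$ is nonempty (guaranteed by $u_n(S_n) = R(S_n)$), I must stay inside the open interval $(\max(T_0, T_n^*), S_n)$ demanded by the hypothesis of Proposition \ref{propo5} (guaranteed because $t^\dagger < S_n$, a consequence of the vanishing error at $S_n$), and — the one genuinely structural feature — I must use the strict improvement from the threshold $\alpha$ to the evaluated bound $\alpha/2$ at $t^\dagger$. This is exactly the gap furnished by the constant condition $A_1 e^{-\frac{1}{L}\frac{\sigma_0}{8} T_0} \leq \frac{\alpha}{2}$; it is what allows the good interval to be opened strictly to the left and makes the bootstrap close.
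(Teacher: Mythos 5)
Your proposal is correct and is exactly the argument the paper intends: its proof of this corollary is a one-line appeal to Proposition \ref{propo5}, the final condition $u_n(S_n)=R(S_n)$, and a continuity (bootstrap) argument, of which your write-up is the detailed version. The key point you isolate — the strict gain from $\alpha$ to $\alpha/2$ via the constant condition $A_1 e^{-\frac{1}{L}\frac{\sigma_0}{8}T_0}\leq \frac{\alpha}{2}$ — is precisely what makes the paper's sketch close.
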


\begin{proof}
This is a consequence of Proposition  \ref{propo5}, the final condition $u(S_n) - R(S_n)=0$, and a continuity argument ($u \in \mathcal C([t^*,S_n],H^1)$).
\end{proof}

We also obtain that the minimal time of existence $T_n^*$ is in fact lower than $T_0$:
\begin{coro}
The interval of existence of $u_n$ contains $[T_0,S_n]$, and $T_n^* <T_0$. Furthermore, $u_n$ is in $\mathcal{C}([T_0,S_n],H^s)$ for any $s\geq 1$.
\end{coro}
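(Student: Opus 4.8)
The plan is to argue by contradiction, combining the continuation criterion (blow-up alternative) coming from the local well-posedness theory with the uniform $H^1$-bound just obtained in the previous corollary. In the subcritical cases (\gls{ZK22}) and (\gls{ZK32}) the Cauchy problem is globally well-posed in $H^1$, so $T_n^* = -\infty$ and the inclusion $[T_0,S_n]\subset (T_n^*,S_n]$ is automatic; the content of the statement lies in the critical case (\gls{ZK23}), where global well-posedness in $H^1$ is not known, but I would phrase the argument uniformly for all three equations. The blow-up alternative I would invoke is: on the maximal backward interval of existence $(T_n^*,S_n]$, if $T_n^*>-\infty$ then necessarily $\lim_{t\downarrow T_n^*}\|u_n(t)\|_{H^1}=+\infty$.

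Then I would suppose, for contradiction, that $T_n^*\geq T_0$. In that case $\max(T_0,T_n^*)=T_n^*$, so the previous corollary applies on the whole of $(T_n^*,S_n]$ and gives
\[ \|u_n(t)-R(t)\|_{H^1}\leq A_1 e^{-\frac{1}{L}\frac{\sigma_0}{8}t}\leq A_1 e^{-\frac{1}{L}\frac{\sigma_0}{8}T_n^*} \]
for all such $t$, where the second inequality uses $t>T_n^*$. Since $\sup_t\|R(t)\|_{H^1}\leq \sum_k\|Q_{c^k}\|_{H^1}$ is finite, this yields $\sup_{t\in(T_n^*,S_n]}\|u_n(t)\|_{H^1}<\infty$, in direct contradiction with the blow-up alternative. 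Hence $T_n^*<T_0$, and the interval of existence $(T_n^*,S_n]$ contains $[T_0,S_n]$.

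For the $H^s$-regularity I would invoke persistence of regularity. The final data $R(S_n)$ lies in $H^\infty$, each soliton profile being smooth with exponential decay by \eqref{decroitexpo}, so the maximal $H^s$-existence time coincides with the $H^1$-one and, as already recorded at the start of the section, $u_n\in\mathcal{C}((T_n^*,S_n],H^s)$ for every $s\geq 0$. Restricting to the subinterval $[T_0,S_n]\subset(T_n^*,S_n]$, which we have just shown to lie inside the domain of existence, gives $u_n\in\mathcal{C}([T_0,S_n],H^s)$ for all $s\geq 1$.

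The hard part will be justifying the continuation criterion in $H^1$ for the critical equation (\gls{ZK23}). There one must check that the local theory (well-posedness for the index $s>1/4$) provides a blow-up alternative that a uniform $H^1$-bound genuinely rules out: concretely, that control of $\|u_n(t)\|_{H^1}$, which dominates $\|u_n(t)\|_{H^s}$ for the well-posedness indices $s\in(1/4,1)$, is enough both to extend the solution past any candidate finite $T_n^*\geq T_0$ and to propagate the $H^1$ and higher regularity along the extension. Once this is granted, everything else is the short contradiction argument above.
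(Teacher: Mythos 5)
Your argument is correct and is essentially the paper's: a uniform $H^1$ bound on $(\max(T_0,T_n^*),S_n]$ from the previous corollary, combined with the local well-posedness theory of Appendix B, forces $T_n^*<T_0$, and persistence of regularity from the $H^\infty$ final data $R(S_n)$ gives the $H^s$ statement. The only cosmetic difference is that the paper phrases the continuation step via the uniform local existence time $T(C)$ rather than the blow-up alternative; both are supplied by Theorem \ref{th:lwp}, so your concern about the critical case is already covered.
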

\begin{proof}
It is a consequence of the previous corollary and of the local well-posedness theory recalled in Appendix B. In fact, we get that:
\begin{align*}
\forall t \in [\max(T_0,T_n^*),S_n], \quad \|u_n(t) \|_{H^1} \leq A_1 e^{-\frac{1}{L} \frac{\sigma_0}{8}T_0} + \max_{T_0 \leq t \leq +\infty} \|R(t)\|_{H^1} =:C,
\end{align*}
and the theory of local well-posedness can apply at any point of the interval $[T_0,S_n]$, with a time of existence $T(C)$.
The regularity of $u_n$ comes from the initial data in $H^s$ for any $s\geq 1$, and the (local) continuity of the flow on the interval of existence.

\end{proof}

\section{$H^s$-estimate}\label{controlHs}

In the previous section, we proved that there exist a time $T_0$, constants $\delta_1 := \frac{1}{L}\frac{\sigma_0}{8}\geq \sqrt{\sigma_0}\frac{\sigma_0}{8}$ and $A_1$ such that :
\begin{align*}
\forall t \in [T_0, S_n], \quad \| u_n(t) -R(t) \|_{H^1} \leq A_1 e^{-\delta_1 t}.
\end{align*}

In this section, we want first to prove that the $H^s$-norm of the error is exponentially decreasing. All the $\dot{H}^s$-norm decay exponentially with the same rate $\delta_4$, but the constants involved highly depend on $s$.

\begin{prop}\label{propo_decroit_Hs}
Let $s\geq 4$. There exist constants $A_s$, $\delta_4:=\frac{\delta_1}{2}$ and a time $T_4\geq T_0$ such that the following bound holds : 
\begin{align}\label{borneHs}
\forall t \in [T_4;  S_n], \quad 
\| u_n(t)-R(t) \|_{\dot{H}^s} \leq A_s e^{-\delta_4 t}.
\end{align} 
\end{prop}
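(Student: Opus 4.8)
The plan is to prove the $\dot H^s$ bound by a bootstrap/induction on the regularity level, combined with an energy-type estimate for the error $w_n := u_n - R$ (or directly for $u_n-R$) at each derivative order, exactly in the spirit of the $H^1$ section but now at the level of high derivatives. Since the $H^1$ bound $\|u_n(t)-R(t)\|_{H^1}\leq A_1 e^{-\delta_1 t}$ is already available on $[T_0,S_n]$, and since $u_n(S_n)=R(S_n)$ gives zero error at the final time, the natural strategy is again \emph{backward integration from $S_n$}: set up a differential inequality for a suitable energy functional $\mathcal N_s(t)$ equivalent to $\|u_n(t)-R(t)\|_{\dot H^s}^2$, show that its time derivative is controlled by itself plus an exponentially small forcing term, and integrate from $S_n$ down to $t$ using the vanishing terminal data. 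Because one expects decay at rate $\delta_4=\delta_1/2$, the induction should be arranged so that the source terms at level $s$ involve only \emph{lower} Sobolev norms of $w$ (already known to decay at rate $\delta_1$, hence faster than the target $\delta_4$) plus the interaction/soliton terms (which decay at the fast rate $e^{-c\sigma_0^{3/2}t}$).

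First I would write the equation satisfied by the error. Since $u_n$ solves \eqref{ZK} and $R=\sum_k R^k$ is a sum of exact solitons, $w=u_n-R$ satisfies
\begin{align*}
\partial_t w + \partial_1\left(\Delta w\right) = -\partial_1\left( (R+w)^p - \sum_{k=1}^K (R^k)^p \right),
\end{align*}
where the right-hand side splits into a genuinely nonlinear part (terms containing at least one factor $w$) and a pure soliton-interaction part $\partial_1\big(R^p-\sum_k (R^k)^p\big)$, which is exponentially small pointwise and in every Sobolev norm thanks to \eqref{decroitexpo} and the decoupling estimate \eqref{interaction}. Next, for a multi-index $\beta$ with $|\beta|=s$, I would apply $\partial^\beta$, multiply by $\partial^\beta w$, and integrate. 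The leading dispersive term $\int \partial^\beta \partial_1\Delta w\,\partial^\beta w$ vanishes by antisymmetry (it is a perfect $\partial_1$-derivative), so no derivative loss comes from the linear part. The semilinear terms, after Leibniz expansion, produce either (i) terms where all $s$ derivatives fall on $w$ — these give, after integration by parts, a contribution bounded by $\|u_n\|_{L^\infty}^{p-1}\|w\|_{\dot H^s}^2$ times a harmless lower-order factor, and can be absorbed or treated by Gr\"onwall — or (ii) terms where some derivatives fall on the smooth, exponentially localized soliton profiles $R^k$, which are bounded by $C e^{-\delta_1 t}\|w\|_{\dot H^s}$ together with lower-order $\dot H^{s'}$-norms of $w$ with $s'<s$.

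The main obstacle, flagged by the authors themselves in the introduction (see the remark around \eqref{example}), is precisely case where the Leibniz distribution of derivatives produces terms that cannot be closed by a pure Gr\"onwall/algebraic argument in dimension $d\geq 2$: unlike gKdV, the multiplicity of spatial directions means certain cross-derivative terms are not total $\partial_1$-derivatives and do not integrate to zero, so the naive energy identity leaves residual terms of the wrong sign or insufficient smallness. The key new ingredient is to handle the variation of exactly those stubborn terms by a \emph{monotonicity-type} argument rather than an algebraic cancellation: one introduces a localized, weighted energy functional (using the same exponentially decaying cut-offs $\psi$, $\phi^k$ as in Section \ref{controlH1}) so that the problematic terms appear with a favorable sign coming from the monotonicity of the weight along the soliton transport, and the leftover is integrated in time against the fast-decaying interaction bounds. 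Concretely, I would (a) prove the base case $s=4$ by this localized energy method, establishing the differential inequality $\frac{d}{dt}\mathcal N_4 \geq -C\delta_4 \mathcal N_4 - C e^{-2\delta_1 t}$ (note the sign, since we integrate backward), (b) integrate from $S_n$ using $\mathcal N_4(S_n)=0$ to get $\mathcal N_4(t)\leq C e^{-2\delta_4 t}$ on $[T_4,S_n]$, and (c) run an induction upward in $s$, at each step feeding the already-proven decay of all lower $\dot H^{s'}$-norms into the source terms so that only the top-order term $\|w\|_{\dot H^s}^2$ must be closed self-consistently. The constant $A_s$ degrades with $s$ (through the number of Leibniz terms and the $L^\infty$-bounds on high derivatives of the solitons), but the rate $\delta_4=\delta_1/2$ is uniform, exactly as stated; one takes $T_4\geq T_0$ large enough that the coefficient of $\mathcal N_s$ in the differential inequality is dominated by $\delta_4$ and that the $\|u_n\|_{L^\infty}$-type prefactors (controlled via the $H^1$ bound and Sobolev embedding) are small.
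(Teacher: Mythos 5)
You have correctly located the obstruction (the quadratic terms with $s$ derivatives on each factor of $v$, weighted by $\partial_1 R$, which defeat a naive Gr\"onwall in $d\geq 2$) and you correctly guess that the cure is a monotonicity-type space-time estimate. But the mechanism you describe for that cure does not work, in two respects. First, you propose to localize with \emph{the same cut-offs $\psi$, $\phi^k$ as in Section \ref{controlH1}}. Those functions have their variation concentrated at the midpoints $m^k(t)$ \emph{between} consecutive solitons, precisely where $\partial_1 R$ and $R^{p-1}$ are exponentially small; they therefore give no control over the tricky terms $\int v_{\bi}^2\, R^{p-2}\partial_1 R$, which live \emph{at the soliton centers}. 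The paper has to introduce a new weight for exactly this reason: a threshold function $\eta$ whose derivative $\eta_1$ dominates $|R|^{p-1}+|\nabla R|^{p-1}$ pointwise, i.e.\ whose variation is centered \emph{on} each soliton (this is admissible only because the $H^1$ decay is already known, so one no longer needs the weight to avoid the solitons). Second, and more seriously, your target differential inequality $\frac{d}{dt}\mathcal N_4 \geq -C\delta_4\,\mathcal N_4 - Ce^{-2\delta_1 t}$, with the coefficient of $\mathcal N_4$ made small ``by taking $T_4$ large,'' cannot be achieved: the offending terms are of size $\|\partial_1 R\|_{L^\infty}\|v\|_{\dot H^s}^2$, and $\|\partial_1 R\|_{L^\infty}$ is a fixed $O(1)$ constant independent of time, so no choice of $T_4$ shrinks it. Nor do these terms acquire a favorable sign inside any algebraically modified energy functional — the paper explicitly checks that the one-dimensional gKdV trick (adding $\int \partial_x u\,\partial_x u\, u$-type correctors) has no analogue here. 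The actual resolution is structurally different: one performs a \emph{separate} weighted identity at the level $\dot H^{s-1}$ (multiplying the equation for $v_{\bi}$, $|\bi|=s-1$, by $v_{\bi}\eta$), whose left-hand side, after integration in time from $t$ to $S_n$, yields $\int_t^{S_n}\!\!\int |\nabla^s v|^2\,\eta_1$ bounded by lower-order and super-quadratic quantities (Lemma \ref{lem:mono2}); this space-time bound is then injected into the \emph{time-integrated} $\dot H^s$ energy identity to control the integral of the tricky terms in absolute value. No Gr\"onwall with a linear term is ever used.

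There is a second, smaller gap: the uniformity in $s$ of the starting time. Your upward induction keeps the rate $\delta_4$ but does not explain why the time from which the decay holds can be taken independent of $s$, which the proposition asserts (a single $T_4$). The trilinear contributions in $G_s$ naively force a starting time $T_s$ growing with $s$ (through the constant $B_s$); the paper handles this by splitting $[T_4,S_n]$ into $[T_4,T_s]\cup[T_s,S_n]$, running the bootstrap on $[T_s,S_n]$, and separately proving mere boundedness of $\|v\|_{\dot H^s}$ on $[T_4,T_s]$ by estimating the trilinear term with the already-established $H^4$ decay ($\|v\|_{H^3}\leq A_4 e^{-\delta_4 T_4}$), before gluing the two. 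Your proposal, as written, would only deliver the estimate on an $s$-dependent interval.
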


 The constants above are independant of $n$, and more importantly, the time on which the exponential decrease holds is independant of $s$. 
 
 Recall that the $u_n$ satisfy $u_n(S_n)=R(S_n)$, and $u_n$ is  defined on $[T_0,S_n]$. As above, we drop the index $n$ and denote $u_n$ by $u$; in the computations below, the constants involved will not depend on $n$ either. We denote $\v$ the difference of the solution $u$ with the sum of $K$ decoupled solitons by $\v$:
\begin{align}
\v(t,\bx):= u(t,\bx)- R(t,\bx).
\end{align}

Estimate \eqref{borneHs} was obtained by Martel \cite{Mar} for subcritical and critical (gKdV). Let us explain why this proposition requires a new argument in the context of \eqref{ZK}. 
The goal of this part is to obtain an inequality of the type:
\begin{align}\label{example}
\left\vert \frac{d}{dt}\left( \| v \|_{\dot{H}^s}^2 \right)+O(l.o.t.)  \right\vert \lesssim \| \v\|_{H^1} + \| \v \|_{\dot{H}^s}^{2-\epsilon} \| \v \|_{H^1}^{\epsilon} + \| \v \|_{\dot{H}^s}^3,
\end{align}
for some fixed $\epsilon>0$ (possibly small). The idea here is that we already know the exponential decay of the $H^1$-norm of the error $\v$, and we need by any means a bound better than $\| \v \|_{\dot{H}^s}^2$, like a power equal to $3$. In other words, an inequality which fails the estimate would be of the kind:
\begin{align}\label{problematic_situation}
\left\vert \frac{d}{dt} \left( \| u \|_{\dot{H}^s}^2\right) + O(\text{l.o.t.}) \right\vert \lesssim \| \v\|_{\dot{H}^s}^2 +O(\text{l.o.t.}),
\end{align}

 Let recall the strategy for $u$ a solution to the (KdV) equation in \cite{Mar}. A direct derivative of the $\dot{H}^2$-norm of the solution gives:
\begin{align*}
\frac{d}{dt} \left( \int \vert \partial_x^2 u \vert^2 \right) = -2 \int \partial_x^2 u \partial_x^3(u^2).
\end{align*}
This term is trilinear in $u$. We next replace $u$ by $\v+R$, and by developing those terms, we see some tricky terms appear:
\begin{align}\label{tricky_terms}
\left\vert \int \partial_x^2 \v \partial_x^2 \v \partial_x(R) \right\vert \lesssim \| \v \|_{\dot{H}^2}^2,
\end{align}
and prevent from achieving an inequality of the sense of (\ref{example}). These quadratic terms with maximal number of derivatives on $\v$ are precisely the ones that also prevent to construct a multi-solitons via fixed point argument using dispersive estimate on the flat space.

To get rid of those terms, one solution is to modify the functional and consider
\begin{align*}
\frac{d}{dt} \left( \int \vert \partial_x^2 u \vert^2 -c \int \partial_x u \partial_x u u \right) \lesssim ``n\text{-linear terms with lower derivatives and }n>p+1 \text{''}.
\end{align*}

This strategy works in dimension $d=1$. However, this technique does not apply anymore in dimension $d\geq 2$, let us see why for (\gls{ZK22}). Following the same scheme, the derivative of the $\dot{H}^2$-norm is:
\begin{align*}
\frac{d}{dt} \left( \sum_{i_1,i_2 =1}^d \int \vert \partial_{i_1}\partial_{i_2} u \vert^2 \right) = -2 \sum_{i_1,i_2 =1}^d \int \partial_{i_1,i_2} u \partial_{1,i_1,i_2} (u^2).
\end{align*}
By replacing all the $u$ by $\v+R$, we see that we need to compensate for the tricky terms  $\int (\partial_{i_1 i_2}\v)^2 \partial_1 R$ to avoid the problematic situation illustrated in (\ref{problematic_situation}). Trying to mimic the previous strategy, let us identify the different terms to add to modify the derived quantity :
\begin{itemize}
\item The crossed-derivatives terms: $\int \partial_1 u \partial_2 u u$. They are not useful for those combinations. In fact, the time derivative of this quantity will give terms with an odd number of derivative $\partial_2$, which does not correspond to our situation.
\item The terms with same derivatives: $\int \partial_1 u \partial_1 u u$ and $\int \partial_2 u \partial_2 u u$. However, no combination of those terms can cancel the trilinear term.
\end{itemize}

We thus need another method to deal with the trilinear terms, which is the purpose of our next result. The idea is the following. Observe that the tricky terms are localised around the center of each soliton, which recalls the situation of the derivative of the energies in (\ref{derivative_eie}), where the second derivative of $u(t)$ is localised around the main variation of $\psi^k$, between two successive solitons:
\begin{align*}
\frac{d}{dt}\sum_{k=1}^\kappa E^k(u)= -\frac{1}{2}\int \vert \Delta u \vert^2 \partial_1 \psi ^k + \text{``better behaved terms''}.
\end{align*}
In other words, the time derivative of a localised $H^{s-1}$ norm bounds a localised $H^s$-norm. After integration in time, we get
\[ \int_t^{S_n} \int \vert \Delta u \vert^2 \partial_1 \psi^k \lesssim E^k(u)(S_n) - E^k(u)(t) \lesssim \| v(t) \|_{H^1}^2. \]
Now, a similar bound can be obtained with $v$ instead of $u$, and from the previous section, we already know that $\| v(t) \|_{H^1}$ has an exponential decay rate. It remains to do one further observation: after performing the above computations, we now don't really need that $\psi^k$ has its variation localized away from the solitons (because the $H^1$ bound has already been obtained), and so we can center it around $R^k$. Summing up in $k$, we obtain exponential decay for
 \[ \int_t^{S_n} \int \vert \nabla^2 v \vert^2 \partial_1 (R^{p-1}), \] 
and this is exactly what is needed to adapt the strategy for (gKdV) to \eqref{ZK}.

The remainder of this section is dedicated to make the above outline rigourous, so as to complete the proof of Proposition \ref{propo_decroit_Hs}.

\bigskip

In dimension $d$, for $i\in \llbracket 1,d\rrbracket$, we denote the derivative of a function $f$ with respect to the $i^\text{th}$ variable by $\partial_i f$ or $f_i$. A multi-index is denoted by a bold letter $\bi$, with length $\vert \bi \vert$ and coefficients $\bi=(i_1, \cdots, i_{\vert \bi \vert})$. Naturally, if $\vert \bi \vert =k$ with $k\in \mathbb{N}$, then $f_\bi$ denotes the derivative in the following directions: $\partial_{i_1} \cdots \partial_{i_k} f$.

Let $A$ be a large enough constant to be chosen later, and which satisfies:
\begin{align*}
\forall k=1, \dots, K, \ \forall \bx \in \mathbb R^d, \quad \frac{A}{1+x_1^2} \geq \vert Q_{c^k} (\bx) \vert^{p-1}+\vert \nabla Q_{c^k} (\bx) \vert^{p-1}.
\end{align*}
We introduce now an adequate monotone function:
\begin{align}
\eta (t,\bx):= 1 + \sum_{k=1}^K \left( A\arctan( x_1 -c^k t -y_1^k)+\frac{\pi}{2} \right).
\end{align}
We call this function a threshold function along the first axis. It increases along $x_1$, and satisfy the fundamental pointwise estimate, at each $t$ and $i \in \llbracket 1, d \rrbracket$:
\begin{align*}
\eta_1(t) \geq \vert R(t) \vert^{p-1}+\vert \nabla R(t) \vert^{p-1}.
\end{align*} 

Due to the previous discussion, it is natural to define the function:
\begin{align}\label{G_s}
G_s(t):= e^{-\delta_1 t} + \| v(t) \|_{\dot{H}^s}^{2-\frac{1}{s}} \| v(t) \|_{H^1}^{\frac{1}{s}} + \|v(t) \|_{\dot{H}^s}^2 \|v(t)\|_{H^3}(1+\|v(t) \|_{H^1})^{p-2} .
\end{align}
This functional controls the interaction of different solitons, and the terms with the $H^1$-norm of the error:
\begin{align*}
e^{- \sqrt{\sigma_0}\frac{\sigma_0}{2}t} & \leq G_s(t), \\
\| v(t) \|_{H^1} & \leq A_1 G_s(t), \\
\| v(t) \|_{H^1}^{p+1} & \leq A_1^{p+1} G_s(t).
\end{align*}

We recall the useful Gagliardo-Niremberg interpolation, with $s' \leq n \leq s$, and $\theta= \frac{n-s'}{s-s'}$ :
\begin{align}\label{GN1}
\| f \|_{\dot{H}^n} \lesssim \| f \|_{\dot{H}^s}^\theta \| f \|_{\dot{H}^{s'}}^{1-\theta}.
\end{align}

Observe that the norms $\dot{H}^s$ with small $s$ are somehow more difficult to deal with: we deal with the nonlinearity using the Sobolev embedding $\dot{H}^{s}(\mathbb{R}^d) \hookrightarrow L^\infty(\mathbb{R}^d)$ which requires a high regularity index $s > d/2$. This is not difficult to overcome: we will prove (\ref{borneHs}) directly for $s\geq s_0$ large enough, and conclude by interpolation. For (\ref{ZK}), we can choose $s_0=4$.

\subsection{Control of a localised $\dot{H}^s$-norm of the error.}

In this section we establish a lemma to control the $\dot{H}^s$-norm of the error as (\ref{tricky_terms}), with a weight equal to the threshold function.

\begin{lemm} \label{lem:mono2}
Let $s \in \mathbb{N}_{\backslash \{0\}}$, a multi-index $\bi= (i_1, \cdots,i_{s-1})$, with $i_j \in \llbracket 1, d \rrbracket$. The following estimate holds, for $t\in [T_0,S_n]$:
\begin{align}
\int_t^{S_n} \int \sum_{l=1}^d \vert \v_{\bi l}(t',\bx) \vert^2 \eta_1(t',\bx) d\bx dt' \lesssim 
	& \int_t^{S_n} G_s(t') dt' + \|v(t)\|_{\dot{H}^{s-1}}^2. \label{equation10}
\end{align}
\end{lemm}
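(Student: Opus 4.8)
The plan is to differentiate in time the localised quantity
\[
\mathcal{I}(t') := \int \sum_{l=1}^d |\v_{\bi l}(t',\bx)|^2\,\eta(t',\bx)\,d\bx,
\]
which is a weighted $\dot H^s$-type norm with the threshold weight $\eta$, and to show that $\frac{d}{dt'}\mathcal{I}$ produces, as its \emph{good} term, exactly $+\int \sum_l |\v_{\bi l}|^2 \eta_1$ (up to a positive constant), while all remaining contributions are bounded by $G_s(t')$. Integrating from $t$ to $S_n$ then gives the claim, because the boundary terms are controlled: $\mathcal{I}(S_n)=0$ since $\v(S_n)=u(S_n)-R(S_n)=0$, and $\mathcal{I}(t)\lesssim \|\v(t)\|_{\dot H^{s-1}}^2$ since $\eta$ is bounded and the integrand carries $s-1$ derivatives $\v_{\bi l}$ with $|\bi l|=s$ — wait, one must be careful: $\v_{\bi l}$ has $s$ derivatives, so $\mathcal{I}(t)\lesssim \|\v(t)\|_{\dot H^{s}}^2$, not $\dot H^{s-1}$; the $\dot H^{s-1}$ on the right of \eqref{equation10} must instead come from integrating by parts one derivative off the highest-order term inside the time derivative, which is the structural point I address below.

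The key computation is to write the equation satisfied by $\v$. Since $u$ solves \eqref{ZK} and $R$ solves it up to the soliton-coupling error, $\v$ satisfies $\partial_{t'}\v + \partial_1\Delta \v = -\partial_1\big((\v+R)^p - R^p\big) + (\text{interaction terms})$. Applying $\partial_{\bi l}$, multiplying by $2\v_{\bi l}\eta$, summing over $l$ and integrating, I would isolate three groups of terms. \textbf{First}, the linear dispersive part $\partial_1\Delta \v$: integrating by parts against $\eta$ (using that $\eta$ depends on $x_1$ only, and the pointwise bounds on $\eta_1,\eta_{111}$ analogous to \eqref{majoration_psi}) yields a positive leading term proportional to $\int\sum_{l,m}|\v_{\bi l m}|^2\eta_1$, plus lower-order terms controlled by $\eta_1$ times lower derivatives; this is the standard Kato-smoothing/monotonicity gain. \textbf{Second}, the nonlinear part: the most dangerous piece is the quadratic-in-$\v$ term with the top number of derivatives, of the form $\int |\v_{\bi l}|^2 \,\partial_1(\text{something like } R^{p-1})\,\eta$; the decisive point is that $|\partial_1(R^{p-1})|$ and similar soliton factors are dominated pointwise by $\eta_1$ (by the construction of $A$ and $\eta$, since $\eta_1\ge |R|^{p-1}+|\nabla R|^{p-1}$), so this term is absorbed by the good term up to a constant, rather than producing the fatal $\|\v\|_{\dot H^s}^2$ of \eqref{problematic_situation}. \textbf{Third}, the genuinely higher-order nonlinear contributions and interaction terms are estimated by $G_s(t')$ via Gagliardo–Nirenberg \eqref{GN1} and the Sobolev embedding $\dot H^{s}\hookrightarrow L^\infty$ (valid for $s\ge 4 > d/2$), matching the three summands in the definition \eqref{G_s}.

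To obtain $\|\v(t)\|_{\dot H^{s-1}}^2$ rather than $\|\v(t)\|_{\dot H^{s}}^2$ on the right-hand side, the functional I actually differentiate should carry $s-1$ derivatives, namely $\int |\v_{\bi}|^2\eta$ with $|\bi|=s-1$, whose time derivative via the dispersive term \emph{gains} one derivative and produces precisely $\int\sum_l|\v_{\bi l}|^2\eta_1$ as the good term; then $\mathcal{I}(t)=\int|\v_{\bi}(t)|^2\eta\lesssim\|\v(t)\|_{\dot H^{s-1}}^2$ is exactly the boundary contribution claimed, and $\mathcal{I}(S_n)=0$. I expect the main obstacle to be the careful bookkeeping of the nonlinear commutators: after applying $\partial_{\bi}$ to $\partial_1\big((\v+R)^p-R^p\big)$ and distributing derivatives by Leibniz, one must check that \emph{every} term either (i) has all $s$ derivatives landing on a single factor of $\v$, in which case it is the good term or is absorbed by $\eta_1$ through the pointwise domination, or (ii) spreads derivatives so that at least two factors carry fewer than $s$ derivatives, in which case interpolation and the $L^\infty$ bound on the remaining low-derivative factors place it under $G_s$. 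Verifying that the integration-by-parts against the $x_1$-dependent weight $\eta$ does not generate an uncontrolled sign in the leading term — i.e. that the coefficient of $\int\sum_l|\v_{\bi l}|^2\eta_1$ is genuinely positive — is the crux, and it is exactly where the one-dimensional structure of $\eta$ and the estimates $|\eta_{111}|\lesssim \eta_1/L^2$ are used.
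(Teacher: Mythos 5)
Your strategy is the paper's: differentiate the weighted functional $\int v_{\bi}^2\,\eta$ with $\vert\bi\vert=s-1$, extract the positive term $\sum_l\int v_{\bi l}^2\eta_1+2\int v_{\bi 1}^2\eta_1$ from the dispersive part by integration by parts against the $x_1$-dependent weight, use $v(S_n)=0$ and the boundedness of $\eta$ for the boundary contribution $\|v(t)\|_{\dot H^{s-1}}^2$, and push the remaining nonlinear and source contributions into $G_s$ via Leibniz, Sobolev embedding and Gagliardo--Nirenberg. Your self-correction about which functional to differentiate lands exactly on the paper's choice.

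One step of your outline deserves correction, however. You present the ``most dangerous'' quadratic term as $\int\vert v_{\bi l}\vert^2\,\partial_1(R^{p-1})\,\eta$, i.e.\ with $s$ derivatives on each factor of $v$, and propose to absorb it into the good term using $\eta_1\ge\vert R\vert^{p-1}+\vert\nabla R\vert^{p-1}$. At this stage no such term arises: since the functional carries only $s-1$ derivatives, the worst quadratic contribution is $\int v_{\bi}\,\partial_1 v_{\bi}\,R^{p-1}\eta$, and a single integration by parts in $x_1$ turns it into $-\tfrac12\int v_{\bi}^2\,\partial_1(R^{p-1}\eta)$, a quantity with only $s-1$ derivatives per factor, hence bounded by $\|v\|_{\dot H^{s-1}}^2\lesssim G_s$ by interpolation --- this is exactly \eqref{majoration1}. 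Moreover, the absorption you describe would not close quantitatively as stated: enlarging $A$ makes $\eta_1$ dominate $\vert R\vert^{p-1}$ by a factor $1/A$, but $\|\eta\|_{L^\infty}$ grows linearly in $A$, so the product $\vert\partial_1(R^{p-1})\vert\,\eta$ is comparable to $\eta_1$ with a constant that does not become small; there is no free parameter to force the absorbed coefficient below $1$. The pointwise domination $\eta_1\gtrsim\vert R\vert^{p-1}+\vert\nabla R\vert^{p-1}$ is really used not inside this lemma but when \emph{applying} \eqref{equation10} to control the genuinely $s$-derivative terms $\int v_{\bi}^2R^{p-2}\partial_1R$ appearing in \eqref{estimeederivee}. (A minor point: the bound $\vert\psi^{(3)}\vert\le\psi'/L^2$ belongs to the weight $\psi$ of Section 2; for $\eta$ one only needs $\vert\eta_{111}\vert\lesssim\eta_1$ with $\eta_1$ bounded, so $\int v_{\bi}^2\eta_{111}$ is again an $(s-1)$-level term rather than something to be absorbed.)
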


\begin{rem}
By summing over $\bi$, we obtain the $\dot{H}^s$-norm with the weight $\eta_1$.  Furthermore, we can notice that this bound is acceptable in view of \eqref{example}.
\end{rem}

\begin{proof}
The error satisfies the following equation:
\begin{align*}
\frac{d \v}{dt}=- \frac{d}{dt}R + \partial_1 \left( \Delta R +R^p \right) - \partial_1 \left( \Delta \v +(R+\v)^p- R^p \right).
\end{align*}
We differentiate $\bi$ times, multiply by $\v_{\bi}\eta$ and integrate in space to get:
\begin{align}
\MoveEqLeft
\sum_{l=1}^d \int \v_{\bi l}^2 \eta_1 + 2 \int (\v_{1\bi})^2 \eta_1 \nonumber \\
	& = \int \v_{\bi}^2 \eta_{111} - \frac{d}{dt}\left( \int \v_\bi^2 \eta \right) + \int \v_\bi^2 \frac{d\eta}{dt}+ 2(-1)^{\vert \bi \vert+1} \int \v\partial_\bi \left( \partial_\bi \left( \frac{dR}{dt} + \partial_1 \left( \Delta R +R^p \right) \right) \eta \right) \label{passe_encore}\\
	& \quad - 2 \int \v_\bi \partial_{1\bi} \left( pR^{p-1}\v \right) \eta - 2 \int \v_\bi \partial_{1i} \left((R+\v)^p- R^p -pR^{p-1}\v\right) \eta. \label{embetant}
\end{align}

The left side of this equation is the left side of (\ref{equation10}) up to a constant. Consider now the terms of (\ref{passe_encore}). We want to integrate them in time. We accept a bound by the $\dot{H}^{s-1}$-norm, because the function $\eta$ and its derivative are bounded. The last term of (\ref{passe_encore}) uses that for each $k$, $\partial_t R^k + \partial_1 (\Delta R^k + (R^k)^2)=0$, and the interactions between the different solitons are weak and exponentially decreasing in (\ref{interaction}). We obtain:
\begin{align*}
\MoveEqLeft
\int_t^{S_n} (\ref{passe_encore}) dt' \lesssim \| \v(t) \|_{\dot{H}^{s-1}}^2 + \int_t^{S_n} \| \v(t') \|_{\dot{H}^{s-1}}^2 + \| \v(t') \|_{L^2} e^{-\frac{\sigma_0}{4}\sqrt{\sigma_0}t'}dt'.
\end{align*}

The tricky term is (\ref{embetant}). The bilinear term in $\v$ gives us, by the embedding $H^2(\mathbb{R}^d) \hookrightarrow L^\infty(\mathbb{R}^d)$: 
\begin{align}
\left\vert \int \v_\bi \partial_{1\bi} (R^{p-1}\v)\eta \right\vert &= \left\vert \int v_\bi \partial_{1\bi} v R^{p-1}\eta + \int v_i \left( \partial_{1\bi} (R^{p-1}v)-\partial_{1\bi} (v) R^{p-1} \right) \eta \right\vert \nonumber \\
	& \lesssim \left\vert \frac{1}{2} \int \v_\bi ^2 \partial_1(R^{p-1}\eta) \right\vert + \| \v\|_{\dot{H}^{s-1}} \| \v\|_{H^{s-1}} \lesssim \| \v \|_{\dot{H}^{s-1}}\left( \| \v\|_{\dot{H}^{s-1}} + \|\v\|_{L^2} \right) \nonumber\\
	& \lesssim G_s(t).\label{majoration1}
\end{align}

Now the trilinear (or quadrilinear) term of (\ref{embetant}). We use the Cauchy-Schwarz inequality, the distribution of the derivatives on the different terms and the embedding $H^2(\mathbb{R}^d) \hookrightarrow L^\infty(\mathbb{R})$:
\begin{align*}
\left\vert \int \v_\bi(t) \partial_{1\bi}(\v^2(t)) \eta(t) \right\vert 
	& \lesssim \| \v(t) \|_{\dot{H}^{s-1}} \sum_{\bj \in (l,\bi), \vert \bj \vert \leq \lfloor \frac{s}{2} \rfloor} \| v(t) \|_{\dot{H}^{s- \vert \bj \vert}} \| v_\bj(t) \|_{L^\infty}  \\
	& \lesssim  \|v(t)\|_{\dot{H}^{s-1}} \sum_{\bj \in (l,\bi), \vert \bj \vert \leq \lfloor \frac{s}{2} \rfloor} \| v(t) \|_{\dot{H}^{s-\vert \bj \vert}} \| v(t) \|_{H^{\vert \bj \vert +2}}. 
\end{align*}
Using the interpolation (\ref{GN1}) with $s'=1$:
\begin{align}
\left\vert \int \v_\bi(t) \partial_{1\bi}(\v^2(t)) \eta(t) \right\vert 
	& \lesssim  \| v(t) \|_{\dot{H}^s}^{1-\frac{1}{s-1}} \| v(t) \|_{H^1}^{\frac{1}{s-1}} \sum_{\bj \in (l,\bi), \vert \bj \vert \leq \lfloor \frac{s}{2} \rfloor} \| v(t) \|_{\dot{H}^s}^{1-\frac{\vert \bj \vert}{s-1}} \| v(t) \|_{H^1}^{\frac{\vert \bj \vert}{s-1}} \left(\| v(t) \|_{H^1} +  \| v(t) \|_{\dot{H}^s}^{\frac{\vert \bj \vert+1}{s-1}} \| v(t) \|_{H^1}^{\frac{s-\vert \bj \vert-2}{s-1}} \right) \nonumber \\
	& \lesssim \| v(t) \|_{H^1}^3 + \|v(t) \|_{\dot{H}^s}^2 \| v(t) \|_{H^1} \lesssim G_s(t), \label{majoration2} \\
\left\vert \int \v_\bi (t) \partial_{1\bi}(\v^3(t)) \eta(t) \right\vert 
	& \lesssim \| \v(t) \|_{H^1}^4+ \| \v(t) \|_{\dot{H}^s}^{2} \|\v(t) \|_{H^1} \| \v(t) \|_{H^2} \lesssim G_s(t). \nonumber
\end{align}

By integrating (\ref{majoration1}) and (\ref{majoration2}) from $t$ to $S_n$, and the Gagliardo-Nirenberg interpolation (\ref{GN1}), we obtain the bounds (\ref{equation10}).
\end{proof}

\subsection{Control of the variation of the $\dot{H}^s$-norm of the solution.}

Before giving the next lemma, let denote the different combinations of derivatives useful for the following. Let $\bi$ a multi-index of length $s$. In the next computations, we will deal with $(p+1)$-linear forms, with different combinations of derivatives:
\begin{align*}
L(f_0, \cdots, f_p)= \int \partial_\bi f_0 \partial_{1 \bi} (f_1 \cdots f_p). 
\end{align*}

We need now to distribute the derivatives of $\partial_{1 \bi}(f_1\cdots f_p)$, in the following way: the function $f_1$ receives the set $\bj_1$ of derivatives among $i_1, \cdots, i_s,1$, \dots, the function $f_p$ which receives the set $\bj_p$ of derivatives among the same set. One particular case, which we want to consider separately, is when all the derivatives fall on only one function: for this, we introduce the set $I_\bi$ by:
\begin{align*}
I_\bi := \left\{ (\bj_1, \cdots, \bj_p)\in \left( \mathcal{P} ( i_1, \cdots,i_s, 1 ) \right) ^p ; \bj_1 \cup \cdots \cup \bj_p=\bi \cup \{1\} =(i_1, \cdots, i_s, 1), \text{ with at least two } \bj_k \text{ not empty} \right\},
\end{align*}
and a special subset:
\begin{align*}
\widetilde{I_\bi}:= \left\{ (\bj_1, \cdots, \bj_p) \in I_\bi ; \text{ with at least one } \bj_k \text{ satisfies } \vert \bj_k \vert=s \right\}.
\end{align*}
In other words, the set $\widetilde{I_\bi}$ characterized the terms with all the derivatives on one term, except one on another term. Finally, we want to find out the term with those $s$ derivatives:
\begin{align*}
\widetilde{I_\bi}(f_k)= \left\{ (\bj_1, \cdots, \bj_p) \in \widetilde{I_\bi} ; \vert \bj_k \vert=s \right\}.
\end{align*}

The following lemma establishes a bound on the time evolution of $s$-derivatives of the solution, up to some terms with $s$ derivatives on each error $\v$. Those terms correspond to those in (\ref{equation10}), which we will deal with Lemma \ref{lem:mono2}.

\begin{lemm}
Let $\bi$ be a multi-index of length $s$. The following pointwise estimate holds, at each time $t$:
\begin{align}\label{estimeederivee}
\MoveEqLeft
\left\vert \frac{d}{dt} \int u_\bi - p(p-1) \int \v_\bi^2 R^{p-2} \partial_1 R+ 2 \sum_{(j_1,\cdots,j_p) \in \widetilde{I_\bi}(\v)} \int \v_\bi \v_{j_k} \prod_{k'\neq k} R_{j_{k'}} \right\vert \lesssim G_s(t).
\end{align}
\end{lemm}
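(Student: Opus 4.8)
The plan is to compute the time derivative of $\int u_\bi^2$ directly from the equation, substitute $u = R + \v$, and then carefully track which combinations of derivatives produce terms that cannot be controlled by $G_s(t)$ alone. First I would use the equation satisfied by $u$, namely $\partial_t u = -\partial_1(\Delta u + u^p)$, differentiate it $\bi$ times, multiply by $u_\bi$ and integrate. The purely linear dispersive part $\partial_1 \Delta u$ contributes a perfect derivative that integrates to zero (after using that $\eta$ is absent here, unlike in Lemma \ref{lem:mono2}), so the only surviving contribution comes from the nonlinearity:
\begin{align*}
\frac{d}{dt} \int u_\bi^2 = -2 \int u_\bi \, \partial_{1\bi}(u^p).
\end{align*}
This is a $(p+1)$-linear form $L(u, u, \dots, u)$ in the notation introduced before the lemma.

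The heart of the argument is to expand $u = R + \v$ inside $L(u,\dots,u)$ and sort the resulting terms by how the derivatives in $\partial_{1\bi}$ are distributed across the $p$ factors, using the index sets $I_\bi$, $\widetilde{I_\bi}$ and $\widetilde{I_\bi}(\v)$. I would organize the expansion into three groups. The first group consists of terms where the full block of $s$ derivatives lands on a single factor and that factor is an error $\v$, with at least one further derivative on another factor: the case where the remaining factor is also $\v$ reconstitutes, after integrating by parts in $x_1$, precisely the quantity $p(p-1)\int \v_\bi^2 R^{p-2}\partial_1 R$ (when the other $p-2$ factors are all $R$ and carry no derivative) and the genuinely problematic terms $2\sum_{\widetilde{I_\bi}(\v)}\int \v_\bi \v_{j_k}\prod_{k'\neq k}R_{j_{k'}}$; these are exactly the quantities subtracted off on the left-hand side of \eqref{estimeederivee}. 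The second group is terms belonging to $I_\bi \setminus \widetilde{I_\bi}$, where the derivatives are split so that no single factor carries all $s$ of them; here every factor has strictly fewer than $s$ derivatives, so after placing the $\dot H^{s}$ weight on $u_\bi$ and estimating the remaining factors by the Sobolev embedding $H^2 \hookrightarrow L^\infty$ together with the Gagliardo-Nirenberg interpolation \eqref{GN1}, each term is bounded by $G_s(t)$ exactly as in the estimates \eqref{majoration1} and \eqref{majoration2}. The third group gathers all terms containing at least one factor of $R$ with the full derivative block, or the fully-$R$ term $L(R,\dots,R)$: these are either controlled by the weak soliton interaction \eqref{interaction} (exponentially small, hence $\lesssim e^{-\delta_1 t} \le G_s(t)$) or reduce to lower-order expressions handled by interpolation.

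The main obstacle, and the reason the two subtracted terms must be isolated rather than estimated, is the subfamily $\widetilde{I_\bi}(\v)$: in those terms the error $\v$ carries all $s$ derivatives, producing a factor $\v_\bi$ of top order multiplied by $\v_{j_k}$ of top order (in the reconstituted $\int \v_\bi^2 R^{p-2}\partial_1 R$ case both factors are order $s$), and the companion factors of $R$ are bounded but do not decay pointwise fast enough to absorb them. Any naive $L^\infty$ estimate would leave a bound of the form $\|\v\|_{\dot H^s}^2$ times a bounded weight, which is exactly the forbidden situation \eqref{problematic_situation} that makes the subsequent Grönwall argument fail. This is precisely why these terms are not estimated here but carried over to the left-hand side of \eqref{estimeederivee}, to be absorbed later via the localized bound of Lemma \ref{lem:mono2} (after choosing the weight $\eta_1$ to dominate $R^{p-1}+|\nabla R|^{p-1}$). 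Thus the real work in this lemma is bookkeeping: showing that once the distinguished family $\widetilde{I_\bi}(\v)$ and the reconstituted quadratic term are removed, every remaining term either has a spare factor of $\v$ at order strictly below $s$ (yielding a power of $\|\v\|_{H^1}$ through interpolation and hence a bound by $G_s$) or is exponentially small by soliton decoupling. I would carry out the integration by parts in $x_1$ to symmetrize the two top-order factors, verify the combinatorial identification of the $\widetilde{I_\bi}(\v)$ terms, and then bound the residual families termwise by $G_s(t)$ using \eqref{GN1} and the Sobolev embeddings, exactly mirroring the estimates already established in Lemma \ref{lem:mono2}.
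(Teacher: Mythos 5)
Your overall strategy is the paper's: expand $\frac{d}{dt}\int u_\bi^2=-2\int u_\bi\partial_{1\bi}(u^p)$, substitute $u=R+\v$, isolate the quadratic-in-$\v$ terms in which both copies of $\v$ carry $s$ derivatives (these are exactly the two quantities subtracted on the left-hand side of \eqref{estimeederivee}, to be absorbed later through Lemma \ref{lem:mono2}), and bound everything else by $G_s(t)$ via Cauchy--Schwarz, the embedding $H^2\hookrightarrow L^\infty$ and the interpolation \eqref{GN1}. The paper organizes the remainder by the number $l$ of factors equal to $\v$ rather than by the derivative distribution, but for the genuinely multilinear-in-$\v$ terms ($l\ge 2$) your treatment coincides with the paper's.

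However, your closing dichotomy for the residual terms --- ``either a spare factor of $\v$ at order strictly below $s$, or exponentially small by soliton decoupling'' --- misses the two low-order cases, and as written both would fail. First, the purely-$R$ contribution is \emph{not} controlled by the weak interaction \eqref{interaction} alone: its diagonal part $\int R^k_\bi\,\partial_{1\bi}\bigl((R^k)^p\bigr)$ is a time-independent constant on which \eqref{interaction} says nothing. It vanishes because the integrand carries an odd total number $2s+1$ of derivatives of the radially symmetric profile $Q_{c^k}$ (equivalently, because $\int (R^k_\bi)^2$ is conserved under the exact one-soliton flow); without this cancellation the right-hand side would contain an $O(1)$ constant, which is not $\lesssim G_s(t)\to 0$. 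Second, the terms linear in $\v$ (one factor $\v$, all others $R$) have no spare low-order copy of $\v$: placing the $\dot H^s$ weight on $\v_\bi$, as you prescribe for your second group, only yields $C\|\v\|_{\dot H^s}$, a bound linear in the top norm with an $O(1)$ coefficient, which is not dominated by $G_s(t)$ and would break the subsequent bootstrap. These terms must instead be integrated by parts completely, moving all derivatives onto the smooth factors $R$, to obtain $C\|\v\|_{L^2}\lesssim G_s(t)$, as the paper does in its case $l=1$. Both repairs are short and standard, but they are needed for the stated estimate to hold.
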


\begin{proof}
Let us compute the time derivative, with $u$ a solution of (\ref{ZK}):
\begin{align*}
\frac{d}{dt} \int u_\bi^2 = -2 \int u_\bi \partial_{1 \bi} (u^p).
\end{align*}

Let us now distribute the $s+1$ derivatives. Two cases occur:
\begin{itemize}
\item either the $s+1$ are all on the same term, and we obtain:
\begin{align*}
-2p \int u_\bi (\partial_1 u_\bi) u^{p-1} = p (p-1) \int (u_\bi)^2 u^{p-2}\partial_1 u.
\end{align*}
\item or the derivatives are not all on the same term, we find back the definition of $I_\bi$.
\end{itemize}

We thus develop the time derivative into:
\begin{align}\label{petit_dvt}
\frac{d}{dt} \left( \int u_\bi^2 \right) = p(p-1) \int (u_\bi)^2 u^{p-2} \partial_1 u - 2 \sum_{(\bj_1, \cdots,\bj_p) \in I_\bi} \int u_\bi \prod_{k=1}^p u_{\bj_k}.
\end{align}

To obtain the estimate (\ref{estimeederivee}), we replace in (\ref{petit_dvt}) $u$ by $\v+R$, we develop, and then estimate each term. By developing, the $(p+1)$-linear forms are applied $l$ times on $\v$ and $p+1-l$ times on $R$. We decompose the different cases.

\vspace{0.25cm}\textbf{First case : $l=0$.} We obtain the terms:
\begin{align*}
p(p-1) \int (R_\bi)^2 R^{p-2} \partial_1 R - 2 \sum_{(\bj_1, \cdots,\bj_p) \in I_\bi} \int R_\bi \prod_{k=1}^p R_{\bj_k}.
\end{align*}
The number of derivatives is odd, then by parity of each soliton, this $(p+1)$-linear form applied to the same soliton $(R^k, \cdots, R^k)$ is null. Only remains the interaction between the different solitons. We thus get by (\ref{interaction}):
\begin{align*}
\left\vert p(p-1) \int (R_\bi(t))^2 R^{p-2}(t) \partial_1 R(t) - 2 \sum_{(\bj_1, \cdots,\bj_p) \in I_\bi} \int R_\bi(t) \prod_{k=1}^p R_{\bj_k}(t) \right\vert \leq Ce^{-\frac{1}{2}\sigma_0 \sqrt{\sigma_0}t}.
\end{align*}

\vspace{0.25cm}\textbf{Second case : $l=1$.} In that case, it means that $\v$ appears only once in the expression (\ref{petit_dvt}), with a certain number of derivatives. By integration by parts, all the derivatives can be put on the other terms, and those terms are finally bounded by a Cauchy-Schwarz inequality by $C\| \v(t) \|_{L^2}$.

\vspace{0.25cm}\textbf{Third case : $l=2$.} We need to identify how many derivatives the terms $\v$ possesses. In both $\v$ have exactly $s$ derivatives, we find the terms in (\ref{estimeederivee}) bilinear in $\v$. Observe that the sum is made on $\widetilde{I_\bi}$. Those terms will need the monotonicity argument, developed in the previous subsection, and applied in the next.

The second possibility is that the two errors $\v$ do not possess $s$ derivatives. By a Cauchy-Schwarz inequality, it implies that there are bounded by $C \| \v \|_{H^s} \| \v \|_{H^{s-1}}$, or more generally by:
\begin{align*}
C \| \v \|_{\dot{H}^s}^{2-\frac{1}{s}} \| \v \|_{H^1}^\frac{1}{s} + C \| \v \|_{H^1}^2.
\end{align*} 

\vspace{0.25cm}\textbf{Fourth case : $l=3$ or $l=4$.} First for $l=3$, let focus on the number of derivatives on the three $\v$. In the worst case, one has $s$ derivatives, and the two others have $s$ and $1$, or $s-1$ and $2$ derivatives. We obtain, with $\vert \bj_1 \vert=s$, and $\vert \bj_2\vert =1$, and by the embedding $H^2(\mathbb{R}^d) \hookrightarrow L^\infty(\mathbb{R}^d)$:
\begin{align*}
\left\vert \int \v_\bi \v_{\bj_1} \v_{\bj_2} R^{p-2} \right\vert 
	& \leq C \| \v \|_{\dot{H}^s}^2 \| \v_{\bj_1} \|_{L^\infty} \leq C \| \v \|_{\dot{H}^s}^2 \| \v \|_{H^3}.
\end{align*}

If we consider $\vert \bj_1 \vert=s-1$, and $\vert \bj_2\vert =2$:
\begin{align*}
\left\vert \int \v_\bi \v_{\bj_1} \v_{\bj_2} R^{p-2} \right\vert 
	& \leq C \| \v \|_{\dot{H}^s} \| \v \|_{\dot{H}^{s-1}} \| \v_{\bj_2} \|_{L^\infty} \leq C \| \v \|_{\dot{H}^s} \| \v \|_{\dot{H}^{s-1}} \| \v \|_{H^4} \\
	& C \leq \| \v \|_{\dot{H}^s} \left( \| \v \|_{\dot{H}^s} \| v\|_{H^3} + \| \v \|_{H^1}^2 \right).
\end{align*}

The other terms with $l=3$ have a lower number of derivatives, and thus are easier to deal with. 

Now for $l=4$, this situation is possible if $p=3$, and the bound is similar to $l=3$ with $\dot{H}^2(\mathbb{R}^d) \hookrightarrow L^\infty(\mathbb{R}^d)$. 

Since $p\leq 3$, there is no other case for $l$.
\end{proof}

\begin{rem}
Notice that in the case $l=3$, with $\bj_1=s-1$ and $\bj_2=2$, the bound on $s\geq s_0=4$ is necessary in order that the argument works.
\end{rem}

\subsection{$H^4$ and $H^s$ bounds of the errors}

Now we establish a bound of the $\dot{H}^s$-norm for $s\geq 4$. Recall that the error $\v(t)=(u-R)(t)$ (equal to $0$ at time $S_n$), has a nice $H^1$-norm on $[T_0,S_n]$ with a bound given in the previous section:
\begin{align} \label{borneH1}
\forall t \in [T_0,S_n], \quad \| \v(t) \|_{H^1} \leq A_1 e^{-\delta_1 t}.
\end{align}
Let us continue the discussion of the beginning of this section, before a rigorous explanation. By the previous computation, we obtain that, for time $t$ when the solution $u$ exists:
\begin{align*}
\left\vert \| v(t) \|_{\dot{H}^s}^2 - \|v(S_n) \|_{\dot{H}^s}^2\right\vert \leq C_s \left( \|v(t) \|_{\dot{H}^{s-1}}^2 + \int_t^{S_n} G_s(t') dt' + O(l.o.t) \right).
\end{align*}

$G_s(t')$ contains two main terms which lead the exponential decay.

First, $G_s(t')$ contains bilinear terms in $v$ of the form $\|v(t) \|_{\dot{H}^s}^{2-\frac{1}{s}} \|v(t)\|_{H^1}^\frac{1}{s}$. If we consider only the bilinear terms, the adequate power closes the bootstrap argument, and we obtain a bound $\| v(t) \|_{H^s}^2 \leq A_s e^{-\delta_s t}$, on the time interval $[T_0,S_n]$, with a constant $A_s$ growing exponentially with $s$.

Second, $G_s(t')$ contains trilinear terms in $v$, on the form $\|v(t)\|_{\dot{H}^s}^2 \|v(t) \|_{H^3}$. If we consider the bound as $\| v(t) \|_{H^s}^3$, the previous computation with the bilinear terms still holds, but on a time interval $[T_s,S_n]$, with no way to complete before $T_s$ to a time independant of $s$ : a bootstrap type argument gives the bound $\| v(t) \|_{H^s}^2 \leq A_s e^{-\delta_s t}$, on a time interval $[T_s,S_n]$. The trilinear terms are the source of the dependance of $T$ on $s$. We require a uniform time $T$ on which we obtain the exponential decay of all $H^s$-norms. The procedure needs to be accurate : we first deal with the $H^4$-norm, and obtain the adequate decay on $[T_4, S_n]$. We thus bound the trilinear term by $\|v(t)\|_{\dot{H}^s}^2 A_4 e^{-\delta_4 t}$. We define for each $s$ larger than $5$, a time $T_s$ on which the bound of the trilinear terms is lower than the bound of the bilinear terms. We use a bootstrap type argument on $[T_s,S_n]$ with an estimate on the time $T_s \geq T_4$. On the interval $[T_4,T_s]$, we bound the trilinear term by $\|v(t)\|_{\dot{H}^s}^2 A_4 e^{-\delta_4 T_4}$, and then complete by a bootstrap type argument and compactness. This procedure influences the constant $A_s$, but the exponential decay with the same coefficient $\delta_4$ holds on $[T_4,S_n]$.

\medskip

Let us now detail the computation to prove the proposition \ref{propo_decroit_Hs}. 

\subsubsection*{$H^4$-norm}

Let $s= 4$. We prove the following bootstrap. There exist constants $A_4$, $\delta_4:=\frac{\delta_1}{2}$ and a time $T_4\geq T_0$ such that the following bootstrap holds. Let $t^* \in [T_4,S_n]$. If
\begin{align}\label{bootstrap_assumption_H4}
\forall t \in [t^*,S_n], \quad \| v(t) \|_{\dot{H}^4} \leq A_4 e^{-\delta_4 t},
\end{align}
then 
\begin{align}\label{bootstrap_conclusion_H4}
\forall t \in [t^*,S_n], \quad \| v(t) \|_{\dot{H}^4} \leq \frac{1}{2} A_4 e^{-\delta_4 t}.
\end{align}

If this bootstrap is true, because $\|v(S_n) \|_{\dot{H}^4}=0$, it immediately proves the proposition \ref{propo_decroit_Hs} for $s=4$.

Let assume the bootstrap assumption (\ref{bootstrap_assumption_H4}) with constants $A_4$, $T_4$ to define later. With $\vert \bi \vert=4$, the estimate (\ref{estimeederivee}) with the help of (\ref{equation10}) can now be integrated into:
\begin{align*}
\MoveEqLeft
\left\vert \int u_\bi^2(S_n)-\int u_\bi^2(t) \right\vert \leq C_4 \left( \|v(t)\|_{\dot{H}^3}^2 + \int_t^{S_n} G_4(t') dt' \right).
\end{align*}

We obtain a bound on the error:
\begin{align}
\MoveEqLeft
\sum_{\vert \bi \vert =4} \int \v_\bi^2(t) = - \sum_{\vert \bi \vert =4} \int_{t}^{S_n} \frac{d}{dt}\left( \int (u-R)_\bi^2(t') \right) dt' \nonumber\\
	& = \sum_{\vert \bi \vert =4} \left( \int u_\bi^2(t) -\int u_\bi^2(S_n) \right) +2(-1)^{1+4}\sum_{\vert \bi \vert=4} \left( \int u(t)R_{\bi\bi}(t) -\int u(S_n)R_{\bi\bi}(S_n) \right) \nonumber\\
		& \quad + \sum_{\vert \bi \vert=4} \left( \int R_\bi (t) R_\bi(t) - \int R_\bi (S_n) R_\bi(S_n) \right) \nonumber\\
	& \lesssim \sum_{\vert \bi \vert =4} \left\vert \int u_\bi^2(t) -u_\bi(S_n) \right\vert + \| u(t)-R(t) \|_{L^2} + \| u(S_n)-R(S_n) \|_{L^2} + e^{- \frac{\sigma_0}{2} \sqrt{\sigma_0}t} \nonumber\\
	& \lesssim \| v(t) \|_{\dot{H}^3}^2 + \int_t^{S_n} G_4(t') dt' + \| v(t) \|_{L^2} + e^{- \frac{\sigma_0}{2} \sqrt{\sigma_0}t} \label{computations}\\
	& \leq C_{\sigma_0,A_1, \delta_1, \delta_4} \left( e^{- \delta_1 t} +  \left( A_4 e^{-\delta_4 t} \right)^{2-\frac{1}{4}}\left( A_1 e^{-\delta_1 t} \right)^{\frac{1}{4}}  + \left( A_4 e^{-\delta_4 t}\right)^3 \left( 1+ A_1 e^{-\delta_1 t} \right)^{p-2} \right). \nonumber
\end{align}
This estimate is sufficient to conclude. In fact, by taking $\delta_4:=\frac{\delta_1}{2}$, we choose $A_4$ and $T_4$ large enough such that:
\begin{align*}\MoveEqLeft
C_{\sigma_0,A_1, \delta_1} \left( e^{- \delta_1 t} +  \left( A_4 e^{-\delta_4 t} \right)^{2-\frac{1}{4}}\left( A_1 e^{-\delta_1 t} \right)^{\frac{1}{4}}  +  \left( A_4 e^{-\delta_4 t}\right)^3 \left( 1+ A_1 e^{-\delta_1 t} \right)^{p-2} \right) \leq \frac{A_4^2}{2^2} e^{- 2\delta_4 T_4}.
\end{align*}
This gives the bootstrap conclusion (\ref{bootstrap_conclusion_H4}).

We now fixed $T_4$, and $\delta_4 =\frac{\delta_1}{2}$.

\subsubsection*{$H^s$-norm for $s \geq 5$}

As pointed out before, the arguments for $s=4$ almost hold to prove (\ref{borneHs}) for $s\geq 5$. However, the bound holds until a time $T_s$, and the exponential decay is not uniform on a time interval. We thus separate the interval into two pieces, and prove the following bootstrap. There exists a constant $B_s$, such that the following holds. Let the time $T_s := \max\left(T_4, \frac{1}{\delta_4} \ln (A_4 B_s^\frac{1}{s})\right)$, and $t^*\in [T_s,S_n]$. If 
\begin{align}\label{bootstrap_assumption_Hs}
\forall t \in [t^*,S_n], \quad & \| v(t)\|_{\dot{H}^s} \leq B_s A_4 e^{- \delta_4 t},
\end{align}
then 
\begin{align}\label{bootstrap_conclusion_Hs}
\forall t \in [t^*,S_n], \quad & \| v(t)\|_{\dot{H}^s} \leq \frac{1}{2} B_s A_4 e^{- \delta_4 t}.
\end{align}

\medskip

Let us suppose that this bootstrap holds on $[T_s,S_n]$. We obtain the exponential decay on $[T_s, S_n]$. A bootstrap type argument with $\|v(t)\|_{\dot{H}^s}^2 \|v(t) \|_{H^3} \leq \|v(t)\|_{\dot{H}^s}^2 A_4 e^{-\delta_4 T_4}$, shows that $\| v(t) \|_{H^s}^2$ is bounded on $[T_4,T_s]$ by a certain constant $\widetilde{B}_s$. Gathering those tow results, we obtain the exponential decay of $\|v(t)\|_{H^s}$ on $[T_4, S_n]$, and concludes the proof of proposition \ref{propo_decroit_Hs}.

\medskip

It remains, assuming (\ref{bootstrap_assumption_Hs}), to deduce (\ref{bootstrap_conclusion_Hs}). The proof focuses on the case $p=2$, the same holds for $p=3$. By the same computations made in (\ref{computations}), the expression (\ref{G_s}) of $G_s$, the bound (\ref{bootstrap_assumption_Hs}) we obtain:
\begin{align}
\MoveEqLeft
\sum_{\vert \bi \vert=s} \int v_\bi^2(t) \leq C_s \left(e^{-\frac{\sigma_0}{2}\sqrt{\sigma_0}t} + \|v(t) \|_{L^2} + \| v(t) \|_{\dot{H}^{s-1}}^2 + \int_t^{S_n} G_s(t') dt' \right) \nonumber\\
	& \leq C_s \left(e^{-\frac{\sigma_0}{2}\sqrt{\sigma_0}t} + \|v(t) \|_{L^2} + \| v(t) \|_{\dot{H}^s}^{2-\frac{2}{s}} \| v(t) \|_{\dot{H}^4}^\frac{2}{s} + \int_t^{S_n} e^{-\delta_1 t'} + \| v(t) \|_{\dot{H}^s}^{2-\frac{1}{s}} \| v(t') \|_{H^1}^{\frac{1}{s}} + \|v(t') \|_{\dot{H}^s}^2 \|v(t')\|_{H^4} dt' \right). \label{certains_calculs}
\end{align}
By the special choice of $T_s$, the bound on the last trilinear term in the integral is bounded by the bound of the bilinear term:
\begin{align*}
\left( B_s A_4 e^{-\delta t} \right)^2 A_4 e^{-\delta_4 t} \leq \left( B_s A_4 e^{-\delta t} \right)^{2-\frac{1}{s}} (A_4 e^{-\delta_4 t})^\frac{1}{s}
	& \Leftrightarrow  \left( B_s \right)^\frac{1}{s} A_4 e^{-\delta_4 t} \leq 1 \\
	& \Leftrightarrow t \geq T_s \geq \frac{1}{\delta_4}\ln(A_4 B_s^\frac{1}{s}).
\end{align*}
We thus obtain:
\begin{align*}
\sum_{\vert \bi \vert=s} \int v_\bi^2(t) 
	& \leq C_s \left( e^{-\frac{\sigma_0}{2}\sqrt{\sigma_0} t}+ A_1e^{-\delta_1 t} + B_s^{2-\frac{2}{s}} A_4^2 e^{-2\delta_4 t} + \int_t^{S_n} e^{-\delta_1 t'} + B_s^{2-\frac{1}{s}} A_4^2e^{-2 \delta_4 t'} dt' \right) \\
	& \leq C_s \left( \left( 1+ \frac{2}{\sigma_0 \sqrt{\sigma_0}} \right) e^{-\frac{\sigma_0}{2}\sqrt{\sigma_0} t} + \left( 1 + \frac{1}{\delta_1} \right) e^{-\delta_1 t} + \left( 1+ \frac{1}{2\delta_4}\right) B_s^{2- \frac{1}{s}} A_4^2 e^{-2 \delta_4 t} \right).
\end{align*}
By choosing any $B_s$ satisfying:
\begin{align*}
C_s \left( \frac{1}{A_4^2} \left( 1 + \frac{2}{\sigma_0 \sqrt{\sigma_0}}+ \frac{1}{\delta_1}  \right)+ \left( 1 + \frac{1}{2\delta_4} \right)B_s^{2-\frac{1}{s}}\right) \leq \frac{1}{2}B_s^2,
\end{align*}
the conclusion of the bootstrap estimate (\ref{bootstrap_conclusion_Hs}) is then proved.

\begin{rem} A dependency to underline is that $B_s \geq \left( \frac{2C_s}{\delta_4} \right)^s$, with the constant $C_s$ depending on the number of combination of $s$ derivatives: $C_s \sim d^s$. In fact, the optimal constant is expected to be $K^s$, with $K$ large enough : it is not reach here.
\end{rem}

\section{End of the construction of a smooth multi-soliton}\label{constructionfinale}

We now complete the existence part of Theorem \ref{th1}. At this point, we dispose of a sequence of solutions $(u_n)_n$ of \eqref{ZK} defined on $[T_0,S_n]$ for some fixed $T_0 \in \mathbb R$ and $S_n \to +\infty$, and such that for all $s \ge 0$, there exist $A_s>0$ such that for all $n \in \mathbb N$
\begin{equation}\label{est:Hs_un_R}
\forall t \in [T_0,S_n], \quad \| u_n(t) - R(t) \|_{H^s} \le A_s e^{- \delta_1t/2}.
\end{equation}

Consider the sequence $(u_n(T_0))_n$: it is bounded in each $H^s$, and so, up to a subsequence that we still denote $(u_n(T_0))_n$, it admits a weak limit $U_0$ which belongs to all $H^s$ for $s \ge 0$. 

Let us show that the convergence is actually strong.

\begin{lemm}
$(u_n(T_0))_n \to U_0$ strongly in $L^2$, and more generally, $H^s$, for all $s\geq 0$.
\end{lemm}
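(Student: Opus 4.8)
The plan is to avoid establishing tightness of $(u_n(T_0))_n$ directly: the monotonicity of Section~\ref{controlH1} controls the mass of the error to the left of the solitons, but the dispersive tail to the \emph{right} of the fastest soliton is not controlled when propagating the smallness from the final time $S_n$ backward to $T_0$. Instead I would deduce strong $L^2$ convergence from convergence of the $L^2$-norms, using mass conservation. Since each $u_n$ is smooth and the mass $M$ is conserved, $\|u_n(T_0)\|_{L^2}=\|u_n(S_n)\|_{L^2}=\|R(S_n)\|_{L^2}$. Expanding $R(S_n)=\sum_k R^k(S_n)$ and using the decay of interactions \eqref{interaction}, one gets $\|R(S_n)\|_{L^2}^2=\sum_{k=1}^K\|Q_{c^k}\|_{L^2}^2+O(e^{-cS_n})$, hence
\[ \lim_{n\to\infty}\|u_n(T_0)\|_{L^2}^2=\sum_{k=1}^K\|Q_{c^k}\|_{L^2}^2=:M_\infty . \]
By weak lower semicontinuity of the $L^2$-norm, $\|U_0\|_{L^2}^2\le M_\infty$, and the whole point is the reverse inequality.

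To obtain it, I would let $u^*$ be the solution of \eqref{ZK} with $u^*(T_0)=U_0$ (which is smooth and, a priori, local in time, since $U_0\in\bigcap_s H^s$) and show that it is a multi-soliton. Using the uniform bounds \eqref{est:Hs_un_R} together with the equation, which bounds $\partial_t u_n$ in $H^{s-3}$, an Aubin--Lions--Simon compactness argument gives, along the subsequence, $u_n\to u^*$ strongly in $\mathcal C([T_0,T],H^{s'}_{\mathrm{loc}})$ for every $T$ and every $s'$; this suffices to pass to the limit in the nonlinearity $\partial_1(u_n^p)$ and to identify $u^*$ as the solution issued from $U_0$, and in particular $u_n(t)\rightharpoonup u^*(t)$ in $H^1$ for each fixed $t\ge T_0$ (legitimate since $t\le S_n$ for $n$ large). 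Lower semicontinuity then yields, for every $t$,
\[ \|u^*(t)-R(t)\|_{H^1}\le\liminf_{n}\|u_n(t)-R(t)\|_{H^1}\le A_1 e^{-\delta_1 t/2}, \]
so $\|u^*(t)\|_{H^1}$ stays bounded, $u^*$ is global forward in time, and $\|u^*(t)-R(t)\|_{H^1}\to0$ as $t\to+\infty$.

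Mass conservation for $u^*$ now gives $\|U_0\|_{L^2}^2=\|u^*(t)\|_{L^2}^2$ for all $t$; letting $t\to+\infty$, together with $\|u^*(t)-R(t)\|_{H^1}\to0$ and $\|R(t)\|_{L^2}^2\to M_\infty$ (again by \eqref{interaction}), this forces $\|U_0\|_{L^2}^2=M_\infty$. Combining the norm convergence with the weak convergence and expanding the square,
\[ \|u_n(T_0)-U_0\|_{L^2}^2=\|u_n(T_0)\|_{L^2}^2-2\langle u_n(T_0),U_0\rangle+\|U_0\|_{L^2}^2\longrightarrow M_\infty-2\|U_0\|_{L^2}^2+\|U_0\|_{L^2}^2=0, \]
where $\langle u_n(T_0),U_0\rangle\to\|U_0\|_{L^2}^2$ by weak $L^2$ convergence. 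This is strong convergence in $L^2$. Strong convergence in every $H^s$ follows by interpolation, using that $\|u_n(T_0)-U_0\|_{H^{s'}}$ is uniformly bounded by \eqref{est:Hs_un_R}: for $s'>s$,
\[ \|u_n(T_0)-U_0\|_{H^s}\lesssim\|u_n(T_0)-U_0\|_{L^2}^{1-s/s'}\,\|u_n(T_0)-U_0\|_{H^{s'}}^{s/s'}\longrightarrow0. \]

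The main obstacle is the middle step: justifying that the weak limit $U_0$ launches a genuine solution $u^*$ of \eqref{ZK} and that $u_n(t)\rightharpoonup u^*(t)$ for positive times, i.e. the (weak) continuity of the flow. This is exactly where the uniform high-regularity bounds \eqref{est:Hs_un_R} are indispensable: they furnish the compactness needed to pass to the limit in the nonlinear term and, through mass conservation for the smooth solution $u^*$, close the argument without ever appealing to tightness of $(u_n(T_0))_n$.
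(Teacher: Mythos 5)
Your proof is correct, but it takes a genuinely different route from the paper. The paper establishes \emph{tightness} of $(u_n(T_0))_n$ directly: it runs a virial-type computation for the mass of $u_n$ outside a large ball over the fixed time window $[T_0,t^*]$, using only the uniform $H^1$ (and $H^2$, for the nonlinear term) bounds together with the fact that at time $t^*$ the solution is exponentially close to the localized profile $R(t^*)$; the resulting uniform smallness of the tails, combined with weak convergence, upgrades to strong $L^2$ convergence, and interpolation finishes. Note that the direction of propagation is from $t^*$ back to $T_0$ over a \emph{fixed} interval, so no monotonicity or one-sided control is needed — the dispersive tail you worry about in your opening paragraph is simply absorbed by the uniform bound $\|u_n\|_{H^1}\le C$ and the factor $1/\gamma$. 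Your alternative avoids any spatial localization by exploiting mass conservation and the classical fact that weak convergence plus convergence of norms implies strong convergence; the price is that you must first construct the limiting solution $u^*$ and show it is a multi-soliton, i.e.\ you front-load the step that the paper performs \emph{after} the lemma (there it is done with strong flow continuity, which is available only once strong convergence of the data is known, whereas you must substitute an Aubin--Lions--Simon argument and identify the limit via uniqueness of smooth solutions). Two points in your write-up deserve explicit care: the identification of the Aubin--Lions limit with the mild solution of Theorem \ref{th:lwp} rests on a uniqueness statement for smooth solutions, and for the $L^2$-critical case (\gls{ZK23}) the globality of $u^*$ should be justified by the blow-up criterion of Theorem \ref{th:lwp} applied to the inherited bound $\|u^*(t)-R(t)\|_{H^s}\le\liminf_n\|u_n(t)-R(t)\|_{H^s}$ (lower semicontinuity in $H^s$, not just $H^1$). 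With these caveats your argument closes; the paper's proof is more elementary and self-contained, while yours is arguably more robust in that it never requires controlling where the mass sits in space.
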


\begin{proof}
It suffices to prove:
\begin{align}\label{astuce}
\forall \epsilon >0, \exists K_\epsilon>0, \forall n, \quad \int_{\vert x \vert >K_\epsilon} \vert u_n(T_0) \vert^2 <\epsilon.
\end{align}
The bound (\ref{astuce}) comes from the estimate (\ref{hypo}). In fact, let $t^*>T_0$ such that $A_1 e^{-\frac{1}{L}\frac{\sigma_0}{8}t^*} \leq \frac{\epsilon}{2}$. The aim is to control the evolution of the mass outside a compact set. Let a function $g\in \mathcal{C}^3(\mathbb{R}^+)$, null on $[0,1]$ and equal to $1$ on $[2, \infty]$. With $\gamma >0$ and $K_\epsilon>0$ to define later, the evolution of the mass outside a compact set gives:
\begin{align*}
\left\vert \frac{d}{dt} \int u_n^2(t,\bx) g \left( \frac{\vert \bx \vert -K_\epsilon}{\gamma} \right) \right\vert 
	& = \left\vert - \int d_1 g \vert \nabla u_n \vert^2 -2 \int d_1 g \left( \partial_1 u_n \right)^2 + \int d_1^3 g u_n^2 + \frac{4}{3} \int d_1 g u_n^3 \right\vert \\
	& \leq \frac{C}{\gamma}\| u_n \|_{H^1}^2 \left( 1 + \| u_n \|_{H^2} \right),
\end{align*} 
where the last term comes from Sobolev embedding. By (\ref{hypo}), the $H^1$-norm is uniformly bounded by a constant $C$, so it suffices to take $K_\epsilon$ and $\gamma$ large enough to obtain:
\begin{align*}
\left\vert \frac{d}{dt} \int u_n^2(t,x) g \left( \frac{\vert \bx \vert -K_\epsilon}{\gamma} \right) \right\vert \leq \frac{\epsilon}{2(t^*-T_0)}.
\end{align*}
Choose $\gamma$ larger to obtain $\| R(t^*)\mathbf{1}_{\vert x \vert >K + \gamma} \|_{H^1}^2 \leq \frac{\epsilon}{4}$, and integrate from $T_0$ to $t^*$ of the previous equality imply:
\begin{align*}
\int_{\vert \bx \vert \geq 2(K_\epsilon+\gamma)} u_n(T_0)^2  \leq \frac{\epsilon}{2}+ \int_{\vert \bx \vert \geq K_\epsilon+\gamma} u_n(t^*)^2 \leq \frac{\epsilon}{2}+2 \int_{\vert \bx \vert \geq K_\epsilon+\gamma} R(t^*)^2  + 2 \|(u_n-R)(t^*) \|_{H^1}^2 \leq \epsilon,
\end{align*}
and concludes the proof of (\ref{astuce}), and the strong convergence of $\left(u_{n}(T_0)\right)_k \to U_0$ in $L^2$. Given $s \ge 0$, by interpolation with $H^{s+1}$ (where weak convergence hold), we conclude that $\left(u_{n}(T_0)\right)_n \to U_0$ strongly in $H^s$.
\end{proof}

Now consider the solution $R^*$ of (\ref{ZK}) with initial data $R^*(T_0):=U_0$, defined on the maximal interval to the right $[T_0,T_+)$.

Let $s > 1/4$ and $t \in [T_0,T_+)$. As $u_n(T_0) \to R^*(T_0)$ in $H^s$, due to the continuity of the flow in $H^s$ (see Theorem \ref{th:lwp} in Appendix B, where the local well posedness theory is recalled), we obtain that $u_n(t) \to R^*(t)$ in $H^s$. In particular, taking the limit in $n$ in \eqref{est:Hs_un_R} we obtain
\begin{equation} \label{est:Hs_R*_R}
\| R^*(t) - R(t) \|_{H^s} \le A_s e^{- \delta_1t/2}.
\end{equation}
By inspection, $\| R(t) \|_{H^s}$ is bounded for $t \in \mathbb R$, so that $\| R^*(t) \|_{H^s}$ is bounded on $[T_0,T_+)$. Due to the blow up criterion in  Theorem \ref{th:lwp}, we infer that $T_+ = +\infty$ (this part of the arugment is only relevant for (\gls{ZK23})). The bound \eqref{est:Hs_R*_R} is therefore valid for all $t \ge T_0$: this means that $R^*$ is the desired multi-soliton associated to $R$.

Finally, observe that for all $t \in T_0$, $R^*$ is smooth (it lies in all $H^s$, $s \ge 0$), so that, using the equation \eqref{ZK}, we see that $\partial_t R^*$ is smooth too, and by a straightforward induction, $R^* \in \mathcal C^\infty([T_0, +\infty) \times \mathbb R^d)$.

This concludes the existence part of Theorem \ref{th1}.

\bigskip

Now we establish a corollary on any multi-soliton. If $u$ is a multi-soliton in the sense of the definition (\ref{defi_multi}), then the convergence of $u$ to the sum of the $K$ decoupled soliton is exponential.

\begin{coro}\label{coro_multisol}
Let $u \in \mathcal C([T,+\infty),H^1(\mathbb{R}^d)$ be a multi-soliton solution of (\ref{ZK}) in the sense of definition (\ref{defi_multi}), and denote $R$ the associated profile. Then the convergence of $u$ to $R$ occurs at at an exponential rate: there exist $A_1>0$, and $\gamma_1>0$ such that:
\begin{align*}
\forall t \geq T, \quad \| u(t)-R(t) \|_{H^1} \leq A_1e^{-\gamma_1 t}.
\end{align*}
\end{coro}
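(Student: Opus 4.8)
The plan is to note that $u$ being a multi-soliton means exactly that $\|u(t)-R(t)\|_{H^1}\to 0$ as $t\to+\infty$, so the smallness hypothesis \eqref{hypo} holds from some time on, and to rerun the entire apparatus of Section \ref{controlH1} (modulation, almost-monotonicity of the localised masses and energies, coercivity) for $u$ itself, using a sequence of final times $S_n\to+\infty$ in place of the single $S_n$ attached to the approximate solutions. The one structural difference is that now $u(S_n)\neq R(S_n)$; the whole point is that the resulting boundary contributions are controlled by $\varepsilon_n:=\|u(S_n)-R(S_n)\|_{H^1}$, which tends to $0$.

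First I would fix $T_0'\geq\max(T,T_1,T_2)$ such that $\sup_{t\geq T_0'}\|u(t)-R(t)\|_{H^1}\leq\alpha$, with $\alpha,T_1,T_2$ the constants of Section \ref{controlH1}; this is possible precisely because $u$ is a multi-soliton. For any $S_n>T_0'$ the modulation of Section \ref{modulation} then applies on $[T_0',S_n]$, producing $u=\widetilde R+w$ with the orthogonality conditions and the bound \eqref{proches2}. The key remark is that the final condition $u(S_n)=R(S_n)$ used for the approximate solutions enters the proof of \eqref{decay} in only one place. Indeed, inequality \eqref{grand} is insensitive to the value of $u$ at $S_n$: the monotonicity decomposition bounds the \emph{difference} of the partial sums of $M^k$ and $\widetilde{E^k}$ between $t$ and $S_n$ by $CLe^{-\frac{1}{L}\frac{\sigma_0}{4}t}$, never using their values at $S_n$. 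The condition is used only in \eqref{une_direction}, through the boundary data $M^k(S_n)-\int Q_{c^k}^2$ and $\widetilde{E^k}(S_n)-E(Q_{c^k})-\frac{c^k}{2}\int Q_{c^k}^2$; by \eqref{estimeemasse}, \eqref{estimeeenergie} and \eqref{local} these are now $O(\varepsilon_n+\varepsilon_n^2)$ instead of $O(\sqrt L\,e^{-\frac{1}{L}\frac{\sigma_0}{4}t})$.

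Carrying this extra term through, both one-sided estimates feeding \eqref{une_direction} acquire a summand $O(\varepsilon_n+\varepsilon_n^2)$, so combining with the coercivity Lemma \ref{coercivity_lemm} exactly as in the proof of \eqref{decay} yields, for every $t\in[T_0',S_n]$,
\[ \lambda_0\|w(t)\|_{H^1}^2 \lesssim \sqrt L\,e^{-\frac{1}{L}\frac{\sigma_0}{4}t} + \Big(\sqrt L\,e^{-\frac{1}{L}\frac{\sigma_0}{4}t}+\alpha\Big)\|w(t)\|_{H^1}^2 + \varepsilon_n+\varepsilon_n^2. \]
Since all constants here are independent of $n$, I fix $t\geq T_0'$, let $n\to+\infty$ so that $S_n\to+\infty$ and $\varepsilon_n\to0$, and, shrinking $\alpha$ if needed to absorb the quadratic term on the left, obtain $\|w(t)\|_{H^1}\lesssim L^{1/4}e^{-\frac{1}{L}\frac{\sigma_0}{8}t}$. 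Via \eqref{proches2} and $\|R-\widetilde R\|_{H^1}$ this gives $\|u(t)-R(t)\|_{H^1}\leq A\,e^{-\gamma_1 t}$ on $[T_0',+\infty)$ with $\gamma_1=\frac{1}{L}\frac{\sigma_0}{8}$. On the compact interval $[T,T_0']$ the map $t\mapsto\|u(t)-R(t)\|_{H^1}$ is continuous hence bounded, and $e^{-\gamma_1 t}$ is bounded below there, so enlarging $A$ absorbs this piece and yields the claim for all $t\geq T$.

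The main obstacle is purely one of bookkeeping: one must make sure every constant in Section \ref{controlH1} is genuinely independent of the final time $S_n$, so that the limit $n\to+\infty$ at fixed $t$ is legitimate. Once that uniformity is confirmed, the only new ingredient is the observation above that the final condition $u_n(S_n)=R(S_n)$ was needed solely in \eqref{une_direction}, where it may be replaced by the vanishing boundary data $\varepsilon_n\to0$.
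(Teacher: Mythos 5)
Your proposal is correct and follows essentially the same route as the paper: the paper's (much terser) proof also takes backward solutions with final data $u_n(S_n)=u(S_n)$ (equal to $u$ by uniqueness) and reruns Proposition \ref{propo5}, observing that the modulation and the monotonicity lemmas are unaffected and that the only change is in the boundary values $M^k(S_n)$, $\widetilde{E^k}(S_n)$, which are now controlled by $\varepsilon_n\to 0$ rather than vanishing. The only slight imprecision is the claim that the final condition enters \emph{only} in \eqref{une_direction}: it also enters \eqref{global}--\eqref{grand} through $H_k(S_n)=O(\varepsilon_n^2)$ and the linear terms $\int w(S_n)\widetilde{R^k}(S_n)\phi^k=O(\varepsilon_n)$ in the expansions \eqref{estimeemasse}--\eqref{local} at $S_n$, but since your final inequality already carries the $O(\varepsilon_n+\varepsilon_n^2)$ remainder before letting $n\to\infty$ at fixed $t$, the argument is unaffected.
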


\begin{proof}
It follows the ideas of Martel \cite[Proposition 4]{Mar}.
Proving this lemma is equivalent to proving Proposition \ref{propo5}, except that we consider $u$ instead of $u_n$. Let us consider a sequence of time $S_n\rightarrow \infty$, and the solutions with the initial conditions $u_n(S_n)=u(S_n)$ on the interval $[T,S_n]$ (by uniqueness, for $t \in [T, S_n]$, $u(t)=u_n(t)$). By assumption, the sequence $\| u_n(S_n)-R(S_n) \|_{H^1}$ goes to $0$. If we can prove the proposition \ref{propo5} for this new $(u_n)_n$, it concludes the proof of the corollary.

To prove (\ref{decay}) for the new $u_n$, we mimic the proof that we already done. The difference is then only in the initial condition. The parameters $L$, $\alpha$, $T_1$ and $A_1$ are to be found during the proof. The lemma of modulation applies similarly. The lemmas on the evolution of the masses and energies are identical : in fact, they concern the time derivative of these quantities, so it does not see the change of initial condition. The formula (\ref{global}) see in fact the change of initial conditions $E^k(S_n)$ and $M^k(S_n)$. However, the arguments of monotonicity apply similarly, and allow to get rid of those terms. The argument of coercivity is then exactly the same.
\end{proof}

\begin{rem}
At this point, nothing is known about the convergence in $H^s$ of $u(t) - R(t)$, since we ignore if the initial condition $u(t)$ belongs $H^s$ for $s >1$. In fact, by uniqueness, this will be the case!
\end{rem}

\section{Uniqueness}\label{unicite}

The goal of this section is to prove uniqueness in the following sense. If two solutions behave both as $t\rightarrow +\infty$ to the same multisoliton, then they are equal. For now, we denote $R^*(t)$ the solution established in the previous section, on a time interval $[T_0, \infty)$. This is a multisoliton close to $R(t)$.

\begin{prop}
Let $u \in \mathcal{C}([T_0, \infty),H^1)$ be a solution of (\ref{ZK}), and satisfying:
\begin{align*}
\| u(t) - R(t) \|_{H^1} \underset{t\rightarrow \infty}{\rightarrow} 0.
\end{align*}
Then $u \equiv R^*$.
\end{prop}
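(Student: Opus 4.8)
The plan is to set $z := u - R^*$, where $R^*$ is the multi-soliton constructed above, and to prove $z \equiv 0$. First I would record that $z$ is already small: Corollary \ref{coro_multisol} applied to the arbitrary multi-soliton $u$ gives $\|u(t)-R(t)\|_{H^1}\le A_1 e^{-\gamma_1 t}$, while \eqref{est:Hs_R*_R} gives $\|R^*(t)-R(t)\|_{H^s}\le A_s e^{-\delta_1 t/2}$ for every $s\ge 0$; hence $\|z(t)\|_{H^1}\le C e^{-\gamma t}$ with $\gamma=\min(\gamma_1,\delta_1/2)>0$, and in particular $z(t)\to 0$ in $H^1$ as $t\to+\infty$. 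The decisive structural point is that $z$ solves the \emph{exact} difference equation
\begin{align*}
\partial_t z + \partial_1\big(\Delta z + (R^*+z)^p - (R^*)^p\big)=0,
\end{align*}
which carries no $z$-independent source: every term is at least linear in $z$, the linear part being the operator linearised around the smooth background $R^*$. This is the gain hinted at by ``the difference of two nonlinear solutions'', and it is what makes uniqueness reachable.

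Next I would re-run the $H^1$ machinery of Section \ref{controlH1}, now for $z$ against the background $R^*$ rather than for $u$ against $R$. Using the modulation procedure of Section \ref{modulation} I would impose the orthogonality conditions on $z$, and build the coercive Lyapunov functional from the localised masses $\int z^2 \phi^k$ and the localised linearised energies $H_k[z]=\int(|\nabla z|^2+c^k z^2 - p(R^*)^{p-1}z^2)\phi^k$, combined with the Abel-summation weights $b^k=1/(c^k)^2$ so that the almost-monotonicity of Lemma \ref{lem:mono} applies with the correct signs. Coercivity (Lemma \ref{coercivity_lemm}, with $R$ replaced by $R^*$) then yields $\sum_k \tfrac{1}{(c^k)^2}H_k[z]\gtrsim \|z\|_{H^1}^2 - \sum_k(\int z\,\widetilde{R^k})^2$, and the scalar products $\int z\,\widetilde{R^k}$ are controlled, exactly as in \eqref{une_direction}, by the sign mismatch between the mass and the energy expansions together with the monotonicity.

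The crucial difference from Section \ref{controlH1} is that now the boundary data sit at $t=+\infty$, where $z\to 0$, and that there is no source term. Integrating the monotonicity from $t$ to $+\infty$, using that the functional vanishes at $+\infty$, and reusing the exponential decay of $\|z\|_{H^1}$ already in hand to make the time integrals converge, I expect to reach an estimate of the schematic form
\begin{align*}
\|z(t)\|_{H^1}^2 \;\lesssim\; \int_t^{+\infty}\Big(\|z(t')\|_{H^1}+e^{-c t'}\Big)\,\|z(t')\|_{H^1}^2\,dt' \;+\; \sum_k\Big(\int z(t)\,\widetilde{R^k}(t)\Big)^2,
\end{align*}
in which, precisely because no additive source is present, \emph{every} term on the right is of the form $\|z\|_{H^1}^2$ times a factor that tends to $0$ as $t\to+\infty$. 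Writing $N(t):=\sup_{t'\ge t}\|z(t')\|_{H^1}$, this gives $\|z(t)\|_{H^1}^2\le \tfrac12 N(t)^2$ for $t$ large, whence $N(t)=0$; thus $z\equiv 0$ on a half-line, and uniqueness for the Cauchy problem (running backward from a point where $z$ vanishes) propagates this to all of $[T_0,+\infty)$.

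The main obstacle I anticipate is the control of the background-dependent terms. Linearising around $R^*$ rather than around the explicit sum $R$ produces error terms in which derivatives of $R^*-R$ multiply derivatives of $z$; to absorb the worst of these (the analogues of the tricky terms \eqref{tricky_terms}) one needs $L^\infty$ bounds on derivatives of $R^*-R$ up to third order, i.e. an $H^{3+d/2^+}$, and hence $H^5$, control of $R^*-R$. This is exactly where the high regularity and exponential rate established in Sections \ref{controlHs}--\ref{constructionfinale} enter, and it is the feature with no analogue in the one-dimensional (gKdV) setting. Arranging the weighted functional, the coercivity, and these $R^*-R$ error bounds so that the whole right-hand side is genuinely $o(1)\,N(t)^2$ is the heart of the argument.
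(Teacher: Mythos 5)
Your proposal follows the same overall route as the paper: set $z=u-R^*$, exploit that the difference of two exact solutions satisfies a source-free equation so that every error term carries a factor of $z$, combine coercivity with a monotonicity argument integrated from $t$ to $+\infty$, close via a bound of the form $\|z(t)\|_{H^1}\leq C e^{-\gamma_1 t}\sup_{t'\geq t}\|z(t')\|_{H^1}$, and use the $H^5$ control of $R^*-R$ exactly where you predict. Two points in your plan would not close as written, though both are repairable and are handled differently in the paper. First, you cannot run the modulation of Section \ref{modulation} on $z=u-R^*$: $R^*$ is a fixed exact solution, and modulating a reference profile would reintroduce source terms $\widetilde{\bx^k}{}'-\widetilde{c^k}\beone$ and destroy the clean source-free structure you rely on. The paper instead uses the explicit linear projection $\widetilde{z}=z-\sum_{i,k}a^{ik}R^k_i$ with $a^{ik}=\bigl(\int (R^k_i)^2\bigr)^{-1}\int R^k_i\, z$, so the orthogonality $\int\widetilde z\, R^k_i=0$ holds by construction, and the coefficients $a^{ik}$ are controlled separately by an ODE argument with a diagonally dominant matrix. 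Second, the unsatisfied direction $\int z\,\widetilde{R^k}$ cannot be controlled ``exactly as in \eqref{une_direction}'': that estimate carries an additive term $\sqrt L e^{-\frac{1}{L}\frac{\sigma_0}{4}t}$ which is \emph{not} multiplied by a power of the error, and feeding it into your scheme would only reproduce the decay $\|z(t)\|_{H^1}\lesssim e^{-\gamma t}$ already known, not the multiplicative bound $e^{-\gamma t}\sup_{t'\geq t}\|z(t')\|_{H^1}$ that forces $z\equiv 0$. The paper instead uses the elliptic equation $-c^kR^k_1+\Delta R^k_1+2R^kR^k_1=0$ to show $\bigl|\frac{d}{dt}\int R^k z\bigr|\leq Ce^{-\gamma_1 t}\|z\|_{L^2}$ and integrates from $t$ to $+\infty$; this is precisely the ``gain of one factor of $z$'' that distinguishes the uniqueness argument from the existence argument, and it is the step your sketch glosses over. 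The paper also works with the single weight $h=\sum_k\frac{1}{c^k}\phi^k$ and the functional $H(t)=\int(|\nabla z|^2-F(t,z))h+z^2$ rather than re-running the Abel-summed family of localised masses and energies, but that is only an implementation difference.
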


The proof is inspired by the techniques used by Martel on the subcritical and critical (gKdV) equations in \cite{Mar}, Proposition 6.

\begin{proof}
First, we can notice that it is equivalent to consider that $u-R^*$ or $u-R$ goes to $0$ at infinity. For now, we will use the following notations:
\begin{align*}
u(t)= z(t)+R^*(t).
\end{align*}
For now, we prove the uniqueness for (\gls{ZK22}) and (\gls{ZK32}). The critical case (\gls{ZK23}) is dealt with at the end.

The new function $z$ satisfies the equation:
\begin{align*}
z_t + \partial_1 \left( \Delta z + z^2 + 2zR^* \right)=0.
\end{align*}

To follow the scheme on the existence, we decompose the error along the different directions, which are the derivatives along an $l$-axis of the $k^{\text{th}}$ soliton:
\begin{align*}
\widetilde{z}(t):= z(t) - \sum_{i=1}^d \sum_{k=1}^K a^{ik} R^k_i, \text{ with } a^{ik} (t) := \frac{1}{\int \left(R^k_i(t) \right)^2dx} \int R^k_i(t) z(t) dx.
\end{align*}
We obtain the following estimates, for some constants $C_1$ and $C_2$:
\begin{align*}
\forall t>T_0, \quad C_1 \| z (t) \|_{H^1} \leq \| \widetilde{z}(t)\|_{H^1} + \sum_{i,k} \vert a^{ik}(t) \vert \leq C_2 \| z(t) \|_{H^1}.
\end{align*}

We need to show that:
\begin{align}\label{principal}
\| \widetilde{z}(t) \|_{H^1} + \sum_{ik} \vert a^{ik}(t) \vert \leq C e^{-\gamma_1 t} \sup_{t' \geq t} \| z(t') \|_{H^1}.
\end{align}

If we admit this inequality, we conclude that for $t$ large enough, $\|z(t) \|_{H^1}=0$, and so $u\equiv R^*$. Let show this inequality in different steps.

\vspace{0.5cm}\textbf{Step 1.} Estimate on $\widetilde{z}(t)$.

We use the decreasing function $\htilde(t,\bx):= \sum\limits_{k=1}^K \frac{1}{c^k} \phit ^k (\bx)$. Let recall that $m^k$ are defined in (\ref{mk}). $\htilde$ is close to $\frac{1}{c^k}$ where the $j^{\text{th}}$ soliton is localised:
\begin{align*}
\left\| \left( \htilde(t,\bx) -\frac{1}{c_1} \right)\mathbf{1}_{]-\infty, \frac{1}{2}\left(m^1(t) +c^1t +y^1_1 \right)]} \right\|_{L^\infty} \leq C e^{-\gamma_1t}, \\
\forall 2 \leq k \leq K \text{ , } \left\| \left( \htilde(t,\bx) -\frac{1}{c^k} \right)\mathbf{1}_{[\frac{1}{2}\left(m^{k-1}(t) +c^kt +y^k_1\right), \frac{1}{2}\left(m^k(t) +c^kt +y^k_1 \right)]} \right\|_{L^\infty} \leq C e^{-\gamma_1t}, \\
\left\| \left( \htilde(t,\bx) -\frac{1}{c^K} \right)\mathbf{1}_{[\frac{1}{2}\left(m^{K-1}(t) +c^Kt +y^K_1 \right), + \infty[} \right\|_{L^\infty} \leq C e^{-\gamma_1t}.
\end{align*} 
In the other regions where the variations of $h$ are higher, the solitons collapse, and we will not see those variations. We claim the estimate, for a constant $\lambda_2>0$:
\begin{align}\label{coercivite}
 \int \left( \vert \nabla \widetilde{z} \vert^2 -2R \widetilde{z} ^2 \right)\htilde + \widetilde{z} ^2 \geq \lambda_2 \| \widetilde{z} \|_{H^1} ^2- \frac{1}{\lambda_2 } \sum_k \left( \left\vert \int \widetilde{z} R^k \right\vert^2 + \sum_i \left\vert \int \widetilde{z} R^k_i \right\vert^2 \right).
\end{align}
The proof of this estimate of coercivity is close to the one obtained for the existence, so we will skip it.

We already know that $\int \widetilde{z} R_i^k =0$ due to the orthogonality. We have  to handle the other term. In fact, because of the weak interaction between solitons:
\begin{align*}
\left\vert \sum_{i,k} a^{ik}(t) \int R_i^j R^k \right\vert \leq C e^{-\gamma_1 t} \sup_{t'>t} \| z(t') \|_{H^1},
\end{align*}
we obtain $\left\vert \int \widetilde{z} R^k -\int z R^k \right\vert \leq Ce^{-\gamma_1 t} \sup_{t'>t} \| z(t') \|_{H^1}$ which allows to focus on $\int z R^k$.

\vspace{0.5cm}\textbf{Step 1.1} Control of $\int z R^k$. By the equation satisfied by a soliton, we have $0=-c^k R_1^k + \Delta R_1^k + 2 R^kR^k_1$, and it implies:
\begin{align*}
\left\vert \frac{d}{dt} \int R^k z \right\vert 
	& = \left\vert \int R_1^k \left( -c^k z + \Delta z + (z+R^*)^2-(R^*)^2 \right)\right\vert \\
	& \leq \left\vert \int R_1^k \left( (z+R^*)^2-{R^*}^2-2R^*z \right) \right\vert + \left\vert \int R_1^k \left( 2R^*-2R \right) z \right\vert + \left\vert \int R_1^k \left( 2R-2R^k \right) z \right\vert  \\
		& \quad+ \left\vert \int( 2 R^k R^k_1 -c^k R^k + \Delta R^k ) z \right\vert \\
	& \leq Ce^{-\gamma_1 t} \| z (t) \|_{L^2}.
\end{align*}

In the previous estimate, we notice that we earn an error of $z$ because we made the difference between two solutions, instead of the difference of a solution with $R$ : see the comparison with (\ref{une_direction}). Then integrating the previous estimate from $t$ to $+\infty$:
\begin{align*}
\left\vert \int R^k z \right\vert \leq Ce^{-\gamma_1t} \sup_{t'>t} \| z(t') \|_{H^1}.
\end{align*}
We can now modify (\ref{coercivite}) into:
\begin{align} \label{coercivite2}
\lambda_2 \| \widetilde{z}(t) \|_{H^1}^2 \leq Ce^{-\gamma_1 t} \sup_{t'>t} \| z(t') \|_{H^1}^2 + \int \left( \vert \nabla \widetilde{z} (t) \vert ^2 -2 R(t) \widetilde{z}^2(t)  \right) h(t) + \widetilde{z}(t)^2.
\end{align}

\vspace{0.5cm}\textbf{Step 1.2} Control of the operator $L_h$.
Let the operator $L_h$ defined by $L_h(z):= (-\Delta z-2Rz)h+z$. An immediate computation gives us:
\begin{align*}
\MoveEqLeft
\int \left( \vert \nabla \widetilde{z} \vert ^2 - 2 R \widetilde{z} ^2 \right)h + \widetilde{z} \\
	& = \int \left( \vert \nabla z \vert ^2 -2 R z \right)h +z^2 - \frac{1}{2} \int z^2 \Delta h + \frac{1}{2} \int \widetilde{z}^2 \Delta h - \sum_{ik} a^{ik} \int L_h z R^k_i - \sum_{ik} a^{ik} \int L_hR_i^k z \\
		& \quad + \sum_{i_1, i_2, k_1, k_2} a^{i_1 k_1} a^{i_2 k_2} \int L_h R_{i_1}^{k_1} R_{i_2}^{k_2}.
\end{align*}
The first term will be dealt with in the next step. We need to watch how the operator $L_h$ acts on the $i^{\text{th}}$-derivative of the $k^{\text{th}}$ soliton.
\begin{align*}
L_hR^j_i= \left( -\Delta R_i^k -2 R R_i^k \right) \left( h-\frac{1}{c^k} \right) + \frac{1}{c^k} \left( -\Delta R_i^k -2RR_i^k +c^kR_i^k \right)
\end{align*}
implies the following control:
\begin{align*}
\vert L_h R_i^k \vert \leq Ce^{-\gamma_1 t} e^{-\frac{\sigma_0}{2} \vert \bx-\by^k -c^kt\beone \vert}.
\end{align*}
Furthermore, we can control the term which enables the operator $L_h$ to be self-adjoint:
\begin{align*}
\int L_h z R_i^k = \int z L_h R_i^k - \int z \left( \Delta h R_i^k + 2 \nabla h \cdot \nabla R_i^k \right),
\end{align*}
which gives, by the localization of the derivatives of $h$:
\begin{align*}
\left\vert \int L_h z R_i^k \right\vert + \left\vert \int L_h R_i^k z \right\vert \leq Ce^{-\gamma_1 t} \| z(t) \|_{L^2}.
\end{align*}

By a Cauchy-Schwarz inequality on the term $a^{ij}$, we obtain:
\begin{align}\label{coercivite3}
\int \left( \vert \nabla \widetilde{z} \vert^2 -2R \widetilde{z} ^2 \right)h + \widetilde{z}^2 \leq Ce^{-\gamma_1 t} \| z \|_{L^2}^2 +\int \left( \vert \nabla z \vert^2 -2Rz^2 \right)h +z^2.
\end{align}

\vspace{0.5cm}\textbf{Step 1.3} Control of $\int \left( \vert \nabla z \vert^2-2Rz^2 \right)h+z^2$.
In order to study the previous quantity, we express the operator as a linearisation by the function:
\begin{align*}
F(t,z):= 2 \left( \frac{(z(t)+R^*(t))^3}{3} -\frac{R^*(t)^3}{3}-{R^*}^2(t)z(t) \right).
\end{align*}
We obtain, with the embedding $H^{\frac{d}{2}^+} \hookrightarrow L^\infty$:
\begin{align*}
\left\vert \int F(t,z) -2Rz^2 \right\vert 
	& \leq \left\vert \int F(t,z)-2R^*z^2 \right\vert + \left\vert \int 2{R^*}z^2-2Rz^2 \right\vert \\
	& \leq \| (R^*-R)(t) \|_{L^\infty} \int z(t)^2 \leq Ce^{-\gamma_1 t} \| z \|_{L^2}^2.
\end{align*}

For now, we study the functional:
\begin{align*}
H(t):= \int \left( \vert \nabla z(t) \vert^2 -F(t,z) \right) \htilde(t) + z^2(t),
\end{align*}
and claim that
\begin{align}\label{claim}
\exists K>0, \forall t>T_0, \quad H(t) \leq K e^{-\gamma_1 t} \sup_{t'>t} \| z(t') \|_{H^1}^2.
\end{align}
It suffices to study the time variation of $H$, and find a lower bound by monotonicity. A computation gives us:
\begin{align}
\frac{d}{dt}H(t)
	& = \int \left( \vert \nabla z \vert^2 -F(t,z) \right) \frac{d\htilde}{dt} + 2 \int \partial_1 \Delta z \nabla z \cdot \nabla \htilde \label{1er_terme} \\
		& \quad + 2 \int \partial_1 \left( (z+R^*)^2-{R^*}^2 \right) \nabla z \cdot \nabla \htilde \label{2ieme_terme} \\
		& \quad - \int \left( \Delta z  + (z+R^*)^2 -{R^*}^2 \right)^2 \partial_1 \htilde \label{3ieme_terme} \\
		& \quad -2 \int \partial_1 \left( \Delta z + (z+R^*)^2-{R^*}^2 \right) z -2 \int {R^*}_t \left( (z+ R^*)^2 -{R^*}^2 -2{R^*}z \right) \htilde \label{4ieme_terme}
\end{align}
To find a lower bound of $\frac{dH}{dt}$, we notice first that the term (\ref{3ieme_terme}) is positive, because $h$ is decreasing in the first direction. We can now deal with (\ref{1er_terme}):
\begin{align*}
\MoveEqLeft
(\ref{1er_terme}) = \int \left( \vert \nabla z \vert^2 -F(t,z) \right) \frac{d\htilde}{dt} -2 \int \left( \vert \nabla \partial_1 z \vert^2 \right) \partial_1\htilde + \int \left( \partial_1 z \right)^2 \partial_1^3 \htilde \\
	& \geq \int \vert \nabla z \vert^2 \frac{d\htilde}{dt} + \int \left( \partial_1 z \right)^2 \partial_1^3 \htilde - \int \vert F(t,z)\vert\frac{d\htilde}{dt} 
\end{align*}

This is where the choice of the function $h$ is crucial. Because of the estimate on $\psit$, we have $\frac{d\htilde}{dt} \geq \frac{1}{4} \vert \partial_1^3 \htilde \vert$, and the sum of the two first terms is non-negative. We obtain, with $\| \htilde_1(t) R(t) \|_{L^\infty} \leq Ce^{-\gamma_1 t}$, the definition of $z$ and corollary (\ref{coro_multisol}):
\begin{align}
(\ref{1er_terme}) \geq -C \int \left( \vert z \vert^3(t) + \vert R^*(t) -R(t) \vert z^2(t) + \vert R(t) \vert z^2(t) \right) \frac{d\htilde}{dt}  \geq -Ce^{-\gamma_1 t}. \label{1er_terme_fin}
\end{align}

We now develop (\ref{2ieme_terme}):
\begin{align*}
(\ref{2ieme_terme}) = 2 \int z_1^2 z\htilde_1 +2 \int z_1^2 R^*\htilde_1  - \int z^2\htilde_{11},
\end{align*}
so the estimate is straightforward, with $ \| u -R \|_{L^\infty} \leq \| R^*-R \|_{H^d} \leq A_de^{-\delta t}$, and the embedding $H^2(\mathbb{R}^d) \hookrightarrow L^\infty$, or $ \| \partial_1(R^* -R) \|_{L^\infty} \leq A_{2+1}e^{-\delta t}$ :
\begin{align}
\vert ( \ref{2ieme_terme}) \vert & \leq Ce^{-\gamma_1 t} \| z_1 \|_{L^2}^2 + 2 \int z_1^2 \vert R \htilde_1 \vert + 2 \int z_1^2 \vert R^*-R \vert\vert \htilde_1 \vert - \int z^2 \vert R^*_1 \htilde_1 + R^*\htilde_{11} \vert \nonumber\\
	& \leq Ce^{-\gamma_1 t} \| z \|_{H^1}^2, \label{2ieme_terme_fin}
\end{align}
with $C$ depending on $A_1$, $A_2$, $A_3$ and $\gamma_1$ lower than $\delta$.

It remains the terms of (\ref{4ieme_terme}). We notice that $\int \partial_1 \Delta z z=\int \partial_1(z^3) =0$. The remaining terms give us:
\begin{align}
\left\vert (\ref{4ieme_terme})\right\vert 
	& = \left\vert 2 \int \left( \partial_1 R^* -R^*_t \htilde \right) z^2 \right\vert \leq 2 \| \partial_1 R^* -R^*_t \htilde \|_{L^\infty} \| z(t) \|_{L^2}^2 \nonumber\\
	& \leq 2 \left( \| \partial_1 \left( \Delta R^* +{R^*}^2- \Delta R - R^2 \right) \htilde \|_{L^\infty} + \| \partial_1 \left( \Delta R +R^2 \right) h-R_1 \|_{L^\infty} + \| \partial_1 (R-R^*) \|_{L^\infty} \right) \| z \|_{L^2}^2 \nonumber \\
	& \leq Ce^{-\gamma_1 t} \| z (t) \|_{L^2}^2, \label{4ieme_terme_fin}
\end{align}
with $C$ depending on $A_{3+3}$ and $0<\gamma_1<\delta$ due to the bound by the $H^{2+3}$-norm of the error. Notice that for this step it was necessary to get the $H^5$-norm of the error.

By summing up (\ref{1er_terme_fin}), (\ref{2ieme_terme_fin}) and (\ref{4ieme_terme_fin}), we obtain a lower bound on the time derivative of $H$: $\frac{dH}{dt}(t) \geq C e^{-\gamma_1 t} \| z(t) \|_{H^1}$. An integration from $t$ to $+\infty$ gives us (\ref{claim}).

\vspace{0.5cm}\textbf{End of Step 1}

By summing up the estimates (\ref{coercivite}), (\ref{coercivite2}), (\ref{coercivite3}) and (\ref{claim}), we obtain the desired estimate (\ref{principal}) for the term $\| \widetilde{z}(t) \|_{H^1}$.

\vspace{0.5cm}\textbf{Step 2.} Estimate on the $a^{ik}(t)$.

Let derive $\widetilde{z}$ along the time.
\begin{align*}
\frac{d}{dt} \left( \widetilde{z}(t) \right) = \frac{dz}{dt}(t) - \sum_{ik} \frac{da^{ik}}{dt}(t) R_i^k(t) + \sum_{ik} a^{ik}(t)R_{i1}^k(t)c^k.
\end{align*}
We can then express the linearised part of the equation of the error around $R^*$:
\begin{align*}
\partial_t \widetilde{z} + \left(\Delta \widetilde{z} + 2 R^* \widetilde{z} \right)_1 = - \sum_{ik} \frac{da^{ik}}{dt} R_i^k + \sum_{ik} a^{ik} \left( - \partial_1 \Delta R^k + R^k c^k -2 \partial_1(2R^*R^k) \right)_i +\partial_1 \left( 2R^*z -(R^*+z)^2 +{R^*}^2 \right).
\end{align*}
The scalar product of this expression with $R_{i_1}^{k_1}$ gives us, by reminding that $0 = \frac{d}{dt} \int \widetilde{z} R_i^k = \int \frac{d \widetilde{z}}{dt} R_i^k-c^k \int \widetilde{z} R_{i1}^k$:
\begin{align*}
\left\vert \frac{da^{i_1k_1}}{dt} \int (R_{i_1}^{k_1})^2 + \sum_{(i,k) \neq (i_1,k_1)} \frac{da^{i,k}}{dt} \int R_{i_1}^{k_1} R_i^k \right\vert
	& \lesssim \sum_{ik} \vert a^{ik} \vert e^{-\gamma_1t} + \| \widetilde{z}(t) \|_{H^1} + \| z(t) \|_{H^1}^2\\
	& \leq Ce^{-\gamma_1 t} \| z(t) \|_{H^1}
\end{align*}
where in the last inequality we used the step 1. One can notice that $\int R_{i_1}^{k_1}R_i^k$ is small while $(i,k)\neq (i_1,k_1)$. In particular, we find as in the modulation a matrix with a dominant diagonal, which implies that
\begin{align*}
\forall (i,k), \quad \left\vert \frac{da^{ik}}{dt} \right\vert \leq Ce^{-\gamma_1 t} \| z(t) \|_{H^1}.
\end{align*}
An integration from $t$ to $\infty$ gives us the estimate (\ref{principal}) for the coefficients $a^{ik}(t)$, and finishes the proof.

\vspace{0.5cm}\textbf{The critical case.} 

The estimates for (\gls{ZK23}) are similar, except that we add orthogonality conditions with respect to $Z^k$:

\begin{align*}
\widetilde{z}(t):= z(t) - \sum_{i=1}^d \sum_{k=1}^K a^{ik} R^k_i - \sum_{k=1}^K b^{k} Z^k,
\end{align*}
with
\begin{align*}
a^{ik} (t) := \frac{1}{\int \left(R^k_i(t) \right)^2dx} \int R^k_i(t) z(t) dx \text{ and } b^k (t) := \frac{1}{\int Z^k(t) ^2dx} \int Z^k(t) z(t) dx.
\end{align*}

With the estimate (\ref{coercivity}) on $\widetilde{z}(t)$, we obtain the lower bound asked in (\ref{principal}). The other arguments apply similarly. Once again, we use repetitively the exponential decay of $R^k_i$ and of $Z^k$.
\end{proof}

\appendix

\section{Coercivity}

We recall the lemma \ref{coercivity_lemm} of coercivity :

\begin{lemm*}\nonumber
For (\ref{ZK}), there exists $C_0 >0$, such that:
\begin{align*}
C_0 \| w \|_{H^1}^2 - \frac{1}{C_0} \sum_{k=1}^K \left( \int \widetilde{R^k}w \right)^2 \leq \sum_{k=1}^K \frac{1}{(c^k)^2} H_k(t).
\end{align*}
\end{lemm*}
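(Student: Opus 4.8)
The plan is to reduce the global quadratic form $\sum_k \frac{1}{(c^k)^2}H_k$ to a sum of single-soliton coercivity estimates, glued together by the partition of unity $\sum_k \phi^k = 1$ (recall $\phi^k>0$ everywhere, so $\sqrt{\phi^k}$ is smooth). First I would record the single-soliton input: via the scaling $Q_c(\bx)=c^{1/(p-1)}Q(\sqrt c\,\bx)$, the coercivity of $\Lc=\Lc_1$ in Proposition \ref{propo_Z} transfers to each $\Lc_{c^k}$, so that for every $\eta\in H^1$ orthogonal to $\partial_i\widetilde{R^k}$ ($i\in\oned$), to $\widetilde{R^k}$, and, in the critical case \gls{ZK23}, also to $\widetilde{Z^k}$, one has $(\Lc_{c^k}\eta,\eta)=\int(\vert\nabla\eta\vert^2+c^k\eta^2-p(\widetilde{R^k})^{p-1}\eta^2)\,d\bx\geq\lambda\|\eta\|_{H^1}^2$ for a fixed $\lambda>0$.

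The second step handles the orthogonality condition that the modulation does \emph{not} impose, namely $\eta\perp\widetilde{R^k}$: the conditions (\ref{ortho1}) and (\ref{ortho2}) give $w\perp\partial_i\widetilde{R^k}$ and $w\perp\widetilde{Z^k}$ but never $w\perp\widetilde{R^k}$, which is precisely the origin of the correction $-\frac1{C_0}\sum_k(\int\widetilde{R^k}w)^2$. I would prove the standard rank-one relaxation: writing $\eta=\eta^\flat+t\,\xi_k$ with $\eta^\flat$ orthogonal to $\widetilde{R^k}$ as well and $\xi_k$ the projection of $\widetilde{R^k}$ onto the imposed constraint space, one gets $t=O(\vert\int\widetilde{R^k}\eta\vert)$, and expanding $(\Lc_{c^k}\eta,\eta)$ the cross and diagonal terms in $t$ are absorbed into $\frac\lambda2\|\eta^\flat\|^2+Ct^2$, yielding $(\Lc_{c^k}\eta,\eta)\geq\frac\lambda2\|\eta\|_{H^1}^2-C(\int\widetilde{R^k}\eta)^2$. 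This holds regardless of the (negative) sign of $(\Lc_{c^k}\widetilde{R^k},\widetilde{R^k})$, and the same argument tolerates the condition $\eta\perp\partial_i\widetilde{R^k}$, $\eta\perp\widetilde{Z^k}$ being satisfied only up to exponentially small errors.

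Third comes the localization. With $\eta_k:=w\sqrt{\phi^k}$, the mass and potential terms localize exactly, $\int c^k w^2\phi^k=c^k\|\eta_k\|_{L^2}^2$ and $\int(\widetilde{R^k})^{p-1}w^2\phi^k=\int(\widetilde{R^k})^{p-1}\eta_k^2$, while the gradient obeys the identity $\int\vert\nabla w\vert^2\phi^k=\int\vert\nabla\eta_k\vert^2+\frac12\int w^2\Delta\phi^k-\int w^2\vert\nabla\sqrt{\phi^k}\vert^2$, so that $H_k=(\Lc_{c^k}\eta_k,\eta_k)+\frac12\int w^2\partial_1^2\phi^k-\int w^2\vert\nabla\sqrt{\phi^k}\vert^2$. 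The two error integrals are carried by $\partial_1\phi^k,\partial_1^2\phi^k$, which by (\ref{majoration_psi}) are $O(1/L)$ and $O(1/L^2)$ and concentrated near the midpoints $m^k$; since $\phi^k\gtrsim\frac12$ there, $\vert\nabla\sqrt{\phi^k}\vert^2=\frac{(\partial_1\phi^k)^2}{4\phi^k}=O(1/L^2)$, so both errors are bounded by $\frac{C}{L^2}\|w\|_{L^2}^2$. Using the constraint $\frac1{L^2}\le\frac{\sigma_0}4$ with $L$ large, these are absorbed by the positive contribution $\sum_k\frac1{(c^k)^2}c^k\int\eta_k^2=\int w^2\sum_k\frac{\phi^k}{c^k}\geq\frac1{c^K}\|w\|_{L^2}^2$.

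Finally I would apply Step 2 to each $\eta_k$: since $\partial_i\widetilde{R^k}$ and $\widetilde{Z^k}$ are exponentially concentrated at the $k$-th center where $\phi^k\approx1$, the quantities $\int\eta_k\,\partial_i\widetilde{R^k}$, $\int\eta_k\widetilde{Z^k}$ differ from $0$ and $\int\widetilde{R^k}\eta_k$ from $\int\widetilde{R^k}w$ only by exponentially small terms (via (\ref{decroitexpo})–(\ref{interaction})). Summing the single-soliton lower bounds with the weights $\frac1{(c^k)^2}\geq\frac1{(c^K)^2}$ and using $\sum_k\|\eta_k\|_{H^1}^2\geq\|w\|_{H^1}^2-\frac{C}{L^2}\|w\|_{L^2}^2$ rebuilds $\|w\|_{H^1}^2$, and choosing $L$ large then $T_0$ large swallows all $1/L$ and exponential errors into the coercive constant, giving the claim for some $C_0>0$. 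The main obstacle is the bookkeeping of Step 3: because the cut-offs vary only in $x_1$ and are merely exponentially (not compactly) small, one must check that every term produced by commuting $\sqrt{\phi^k}$ past $\nabla$ is controlled either by the $1/L$ gain or by the exponential smallness of the potential on the support of $\partial_1\phi^k$ — this is exactly where the exponentially decaying choice of $\psi$ and the condition $\frac1{L^2}\le\frac{\sigma_0}4$ are used.
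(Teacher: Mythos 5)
Your proposal is correct and follows essentially the same route as the paper's own proof: localize via $\eta_k = w\sqrt{\phi^k}$, identify $H_k$ with the quadratic form $(\Lc_{c^k}\eta_k,\eta_k)$ up to cut-off commutator errors of size $O(1/L)$, relax the missing orthogonality $w\perp\widetilde{R^k}$ into the penalty $-(\int\widetilde{R^k}w)^2/C_0$, and sum with the weights $\frac{1}{(c^k)^2}$ while absorbing the exponential and $1/L$ errors. The only cosmetic difference is that you phrase the relaxation as a rank-one perturbation lemma where the paper decomposes $w\sqrt{\phi^k}=\alpha_0\widetilde{R^k}+\sum_i\alpha_i\partial_i\widetilde{R^k}+w^k$ explicitly, but these are the same computation.
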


\begin{proof}
Let deal first with the cases of (\gls{ZK22}) and (\gls{ZK32}). The proof is close to the step 2 of the appendix A of \cite{MM02}. We know a property close to coercivity of the operator $H_k(t)$: by the article of Weinstein, if we consider a function $v$ with the orthogonality conditions $v \perp Q$, and $\forall i \in \oned$, $v \perp \partial_i Q$, then the following operator is coercive:
\begin{align}\label{coercivite4}
\exists C_1 >0 \text{ , } \int (\nabla v) ^2 + v^2 - p Q^{p-1}v^2 \geq C_1 \| v \|_{H^1}^2.
\end{align}

We claim that in the case of the operators $(H_k(t))_k$, we obtain a similar coercivity condition. The cut-off function $\phi^k$ and the unsatisfied orthogonality condition $w \perp \widetilde{R^k}$ will change the previous estimate. Let the bilinear form $B_k(v,w):= \int \nabla v \cdot \nabla w +c^k v w -2 \widetilde{R^k} v w$. By developing the expression of $H_k(t)$:
\begin{align*}
H_k(t) = B_k(w \sqrt{\phi^k}, w\sqrt{\phi^k}) - \frac{1}{4}\int w^2 \frac{(\partial_{x_1}\phi^k)^2}{\phi^k}- \int w \partial_1w \partial_1 \phi^k.
\end{align*}
The first term can be developed, by decomposing $w\sqrt{\phi^k}(t):= \alpha_0 \widetilde{R^k} + \sum\limits_{i=1}^d \alpha_i \partial_i \widetilde{R^k} + w^k$, with
\begin{align*}
\alpha_0:= \frac{\langle w \sqrt{\phi^k}, \widetilde{R^k} \rangle}{\| Q_{c^k} \|_{L^2}^2} \text{ , } \alpha_i := \frac{\langle w \sqrt{\phi^k}, \partial_i \widetilde{R^k}\rangle}{\| \partial_i Q_{c^k}\|_{L^2}^2} \text{ , } \langle w^k , \widetilde{R^k} \rangle =\langle w^k , \partial_i \widetilde{R^k} \rangle=0,
\end{align*}
and using the Young inequality:
\begin{align*}
B_k(w \sqrt{\phi^k},w \sqrt{\phi^k}) & = B_k(w^k, w^k) + \alpha_0^2 B_k(\widetilde{R^k},\widetilde{R^k}) +2 \alpha_0 B_k(\widetilde{R^k}, w^k) \\
 & \geq \frac{C_2}{2}  \| w^k \|_{H^1}^2-\frac{1}{C_2} \left( \int \widetilde{R^k} w^k \sqrt{\phi^k} \right)^2,
\end{align*}
with $C_2>0$ a constant small enough. Furthermore, by the norm:
\begin{align*}
\| w^k \|_{H^1}  \geq \| w \sqrt{\phi^k} \|_{H^1} - C\sum_{i=0}^d \vert \alpha_i \vert \text{ so } \| w^k \|_{H^1}^2  \geq \frac{1}{2}\| w \sqrt{\phi^k} \|_{H^1}^2 - C \sum_{i=0}^d \vert \alpha_i \vert^2,
\end{align*}
we obtain the upper bound:
\begin{align*}
B_k(w \sqrt{\phi^k}, w \sqrt{\phi^k})  \geq \frac{C_2}{4} \| w \sqrt{\phi_k}\|_{H^1}^2- \frac{C}{C_2} \left( \left( \int \widetilde{R^k} w^k \sqrt{\phi^k} \right)^2 + \sum_{i=1}^d \left( \int \partial_i \widetilde{R^k} w^k \sqrt{\phi^k} \right)^2 \right).
\end{align*}
By the definition of $\phi^k$, the derivative with respect to $x_1$ will make a factor $\frac{1}{L}$ appear: $\left\vert \partial_1 \phi^k \right\vert\leq \frac{1}{L}\phi^k$, see (\ref{majoration_psi}), which implies by taking $L$ large enough depending on $C_2$ :
\begin{align*}
H_k(t) \geq \frac{C_2}{16}\int(\nabla w^2 + w^2) \phi_k - C \left( \left( \int \widetilde{R^k} w^k \sqrt{\phi^k} \right)^2 + \sum_{i=1}^d \left( \int \partial_i \widetilde{R^k} w^k \sqrt{\phi^k} \right)^2 \right).
\end{align*}
Let $k \in \llbracket 2, K-1\rrbracket$, and the interval $J_t^k:=\left[ \frac{1}{2}\left( x_1^k-c^k t-y_1^k+m^{k-1}(t) \right); \frac{1}{2}\left( x_1^k-c^k t-y_1^k+m^k(t) \right)\right]$. We obtain :
\begin{align*}
\left\vert \int \widetilde{R^k} w^k \sqrt{\phi^k} -\int \widetilde{R^k}w^k \right\vert
	& \lesssim \left( \int \left( \sqrt{\phi^k}-1 \right)^2 \mathbf{1}_{J_t^k} \right)^\frac{1}{2} \| w^k \|_{H^1}  + \left( \int \widetilde{R^k}^2 \mathbf{1}_{{J_t^k}^C} \right)^\frac{1}{2} \| w^k \|_{H^1} \\
 	& \lesssim \sqrt{L} e^{-\frac{1}{L} \frac{\sigma_0}{4}t}\| w ^k \|_{H^2} + \left( e^{-\frac{\sigma_0}{2}t} +\vert \widetilde{\bx^k}(t) -\widetilde{c^k}te_1 \vert \right) \| w^k \|_{H^1},
\end{align*}
and similarly:
\begin{align*}
  \left\vert \int \widetilde{R^k} w^k (\sqrt{\phi^k}-1) \right\vert + \sum_{i=1}^d \left\vert \int \partial_i \widetilde{R^k} w^k (\sqrt{\phi^k}-1) \right\vert \leq C \left(\sqrt{L}e^{-\frac{1}{L}\frac{\sigma_0}{4}t}  + \vert \widetilde{\bx^k}(t) - \widetilde{c^k} t\beone \vert \right) \|w^k \|_{H^1},
\end{align*}
with $C$ depending on the $(\by^k)_k$ and on the $(c^k)_k$. Adding the previous estimates from $1$ to $K$ with the weights $\frac{1}{(c^k)^2}$: 
\begin{align*}
\sum_{k=1}^K \frac{1}{(c^k)^2} H_k(t)
	& \geq \frac{C_2}{16(c^K)^2}\sum_{k=1}^K \int \left( \vert \nabla w \vert^2 +w^2 \right) \phi_k- \frac{C}{(c^1)^2} \left( \sqrt{L}e^{-\frac{1}{L}\frac{\sigma_0}{4}t} +\sum_k \vert \widetilde{ \bx^k}(t)- \widetilde{c^k}t\beone \vert \right) \| w \|_{H^1}^2 \\
		& \quad - C \sum_{k=1}^K \left( \left( \int \widetilde{R^k} w^k \right)^2 + \sum_{i=1}^d \left( \int \partial_i \widetilde{R^k} w^k \right)^2  \right).
\end{align*}
We find back the $H^1$-norm of the error on the all space by summing the $\phi_k$. Thus, we replace the last $w^k$ by there expression with $w\sqrt{\phi^k}$, the weak interactions between $\widetilde{R^k}$ and $(1-\sqrt{\phi^k})$, by the Young inequality and (\ref{proches2}), we obtain:
\begin{align}
\MoveEqLeft
\sum_{k=1}^K \frac{1}{(c^k)^2} H_k(t) \nonumber \\
	& \geq \frac{C_2}{16(c^K)^2} \| w \|_{H^1}^2 - \frac{C}{(c^1)^2} \left( \sqrt{L}e^{-\frac{1}{L}\frac{\sigma_0}{4}t} + e^{-\frac{\sigma_0}{2}t} +\alpha \right) \| w \|_{H^1}^2 - C \sum_{k=1}^K \left( \left( \int \widetilde{R^k} w \right)^2 + \sum_{i=1}^d \left( \int \partial_i \widetilde{R^k} w \right)^2  \right). \label{coercivite_pb_L}
\end{align}

By taking $L$ large enough depending only on $C_2$, $T_2$ large enough depennding on the different velocities, $\alpha$ small enough and the orthogonality conditions (\ref{ortho1}), there exists a constant $C_0$ small enough concluding the lemma for (\gls{ZK22}) and (\gls{ZK32}). 
 
Let us now deal with the case of (\gls{ZK23}). The result is similar, except that we asked for an other orthogonality condition (\ref{ortho2}), so we use an other parameter of modulation $(\widetilde{c^k}(t))_k$. Let $(\cdot, \cdot)$ denotes the $L^2$-scalar product. We claim, as in \cite{MM01} and \cite{Wei85}, that 
\begin{align}\label{debut_coercivite}
v\perp Q , v \perp Z \quad \Rightarrow  \quad (Lv,v)\geq 0.
\end{align}

We use a result of \cite{Wei85} concerning the operator $\Lc:=-\Delta +1 -pQ^{p-1}$:
\begin{align}\label{Weinstein}
\inf_{(v,Q)=0}(\Lc v,v)=0.
\end{align}
 Let suppose by contradiction that there exists $v$, satisfying $v\perp Z$, such that $(\Lc v,v)<0$. Let consider the operator $\Lc$ restricted to $\spann \{Z, v\}$. By definition of $Z$, $(\Lc (Z),Z)=-\lambda_0$. Thus we can find $v_0 \in \spann\{Z, v\}$, $v_0 \perp Q$ such that $(\Lc v_0,v_0)<0$, which contradicts (\ref{Weinstein}), and proves (\ref{debut_coercivite}). 

To obtain a similar coercivity inequality (\ref{coercivite4}), suppose by contradiction that:
\begin{align*}
0=\inf \left\{ (\Lc v,v) ; \| v \|_{L^2}=1, v \perp Z, v \perp \partial_1 Q, v \perp \partial_2 Q \right\}.
\end{align*}
By taking a sequence in $L^2$ for which the infimum is attained, up to a subsequence, it converges to an element $v$. It satisfies the orthogonality condition (\ref{ortho1}) and (\ref{ortho2}), and by rescaling arguments as in \cite{Wei85}, we can suppose that the norm of $v$ is $1$. The minimum is thus attained at a point $v\neq 0$, and there exists $(\alpha_0, \alpha_1, \alpha_2, \beta)$ among the critical points of the Lagrange multiplier problem:
\begin{align*}
\left\{
\begin{array}{l}
\Lc v -\beta v = \alpha_0 Z + \alpha_1 \partial_1 Q + \alpha_2 \partial_2 Q, \\
(\Lc v,v)=0, \\
v \perp Z,\quad  v \perp \partial_1 Q, \quad v \perp \partial_2 Q, \\
\| v \|_{L^2}=1.
\end{array}
\right.
\end{align*}
The scalar product with respect to $v$ gives us $\beta=0$, with the eigenvector $Z$ gives $\alpha_0=0$ and with $\partial_i Q$ gives $\alpha_i = 0$. This contradiction leads to a positive infimum, and the existence of a positive constant $C_1$ satisfying (\ref{coercivite4}).

The rest of the proof is then similar to the one of (\ref{ZK}).
\end{proof}

\section{Local well posedness}

Let us recall the local well posedness result that we will use.

\begin{theo}[{\cite{MP}, \cite{GH}, \cite{Kin} and \cite[Theorem 1.1]{LP09}}] \label{th:lwp}
Let $s > 1/2$. There exists a function $T: [0,+\infty) \to (0,+\infty)$ such that for any $u_0 \in H^s$, there exists a mild solution $u \in \mathcal C([-T(\| u_0 \|_{H^s}), T(\| u_0 \|_{H^s}), H^s)$  (to the Duhamel formulation) of \eqref{ZK}, which is furthermore unique in some subspace. The maximal interval of existence $(T_-(u_0),T_+(u_0))$ is open and does not depend on $s >1/4$, and if $T_+(u_0) <+\infty$, then $\| u(t) \|_{H^s} \to +\infty$ as $t \to T_+(u_0)$.

Furthermore, the flow $u_0 \mapsto u$ is continuous in the following sense: given $u_0 \in H^s$, then for any compact interval $J \subset (T_-(u_0),T_+(u_0))$ and $\e>0$, there exists $\delta >0$ such that if $\| v_0 - u_0 \|_{H^s} \le \delta$, then $J \subset (T_-(v_0),T_+(v_0))$ and $\sup_{t \in J} \| v(t) - u(t) \|_{H^s} \le \delta$ ($u$ and $v$ are the solutions to \eqref{ZK} with initial data $u_0$ and $v_0$ respectively.
\end{theo}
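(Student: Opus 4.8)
The statement collects the local well-posedness theory established in the cited works, so the plan is \emph{not} to reprove the core multilinear estimates but to assemble them and to justify the two global features actually used in the body of the paper: the $s$-independence of the maximal interval and the quantitative continuity of the flow. First I would recast \eqref{ZK} in its Duhamel form
\[ u(t) = U(t)u_0 - \int_0^t U(t-t')\,\partial_1\bigl(u^p(t')\bigr)\,dt', \]
where $U(t)=e^{-t\partial_1\Delta}$ is the unitary group associated with the linear part $\partial_t u + \partial_1\Delta u = 0$, with Fourier symbol $e^{it\xi_1|\xi|^2}$. Existence and uniqueness in $H^s$, for the thresholds relevant to each model (for (\gls{ZK22}) and (\gls{ZK32}) following \cite{MP}, \cite{GH}, \cite{Kin}, and for (\gls{ZK23}) following \cite{LP09} and its improvements), then follow from a contraction argument in the resolution spaces (Bourgain $X^{s,b}$-type spaces adapted to the ZK dispersion) used there. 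The ingredients are the linear estimates for $U(t)$ (Strichartz, Kato smoothing, maximal-function bounds) together with the bilinear, respectively trilinear, estimates controlling $\partial_1(u^p)$ in the dual space; these are exactly what the references supply, and I would invoke them directly. The contraction produces a time of existence $T(\|u_0\|_{H^s})$ that is a non-increasing function of the norm, as stated.

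Next, to obtain that the maximal interval $(T_-,T_+)$ does not depend on $s>1/4$, together with the blow-up criterion, I would argue by persistence of regularity. The point is that the lower bound on the length of the existence interval produced by the fixed point is governed only by the lowest-regularity norm entering the contraction, say $H^{s_0}$ with $s_0$ just above the threshold. Hence, for $u_0\in H^s$ with $s\ge s_0$, as long as $\|u(t)\|_{H^{s_0}}$ remains bounded one can reapply the local theory with a uniform time step, and the higher $H^s$-norm is propagated by a Gronwall estimate: the multilinear bound for $\tfrac{d}{dt}\|u\|_{H^s}^2$ is of the form $\|u\|_{H^s}^2$ times a factor controlled by $\|u\|_{H^{s_0}}$. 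This simultaneously shows that increasing $s$ does not shrink $(T_-,T_+)$ and that finiteness of $T_+$ forces $\|u(t)\|_{H^{s_0}}\to\infty$, hence $\|u(t)\|_{H^s}\to\infty$ at every admissible level.

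The quantitative continuity of the flow is then a by-product of the same contraction: subtracting the integral equations for $u$ and $v$ and applying the multilinear estimates yields a Lipschitz bound on $\sup_{t\in J}\|v(t)-u(t)\|_{H^s}$ in terms of $\|v_0-u_0\|_{H^s}$, first on a short interval where the contraction applies, and then on any compact $J\subset(T_-(u_0),T_+(u_0))$ by subdividing $J$ into finitely many such intervals and chaining the estimates (this also gives $J\subset(T_-(v_0),T_+(v_0))$ for $v_0$ close enough to $u_0$).

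The one genuinely delicate point — and the only place where substantial work is hidden — is the sharp multilinear estimate at the borderline regularities, in particular the $L^2$-critical nonlinearity of (\gls{ZK23}) and the endpoint case for (\gls{ZK32}). This is precisely what \cite{MP}, \cite{GH}, \cite{Kin} and \cite{LP09} establish, so in the present appendix it suffices to quote those results; the remaining assembly (Duhamel setup, persistence, and continuity) is standard and is what I would write out explicitly.
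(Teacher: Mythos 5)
The paper itself gives no proof of this statement: Theorem \ref{th:lwp} is recalled from the cited references, and the body of the paper only uses it as a black box. Your proposal is therefore not competing with an argument in the paper; it is a (correct, standard) account of how the cited results are obtained and glued together, and you rightly identify that the only substantial content — the multilinear estimates in the Bourgain-type resolution spaces at the borderline regularities — lives in \cite{MP}, \cite{GH}, \cite{Kin}, \cite{LP09}. Two remarks on the glue you supply. First, the persistence-of-regularity step should not be run as a Gronwall estimate on $\frac{d}{dt}\|u\|_{H^s}^2$: at regularities as low as the thresholds in question ($s$ just above $1/4$ or $1/2$), that time derivative is not controlled by $\|u\|_{H^s}^2$ times an $H^{s_0}$-quantity (the commutator estimates needed for such an energy argument require roughly $s>d/2+1$). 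The standard route — and the one the references use — is that the contraction in $X^{s,b}$ has an existence time depending only on $\|u_0\|_{H^{s_0}}$ while remaining a contraction in the higher-regularity space, so the $H^s$ norm is propagated for free on each local step; this gives both the $s$-independence of $(T_-,T_+)$ and the blow-up criterion. Second, the regularity thresholds are not uniform across the three models (roughly $s>1/2$ for (\gls{ZK22}), $s\geq 1/2$ for (\gls{ZK32}), $s>1/4$ for (\gls{ZK23})), a discrepancy already present in the statement itself; a careful write-up should fix one threshold per equation rather than quoting both $1/2$ and $1/4$. With those adjustments your assembly is exactly what is needed, and for the purposes of the paper (which applies the theorem to data in $H^s$ for large $s$) nothing more is required.
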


In particular, in the $L^2$ subcritical cases (\gls{ZK22}) and (\gls{ZK32}), if the initial condition is in $H^1$, one has global existence: $(T_-(u_0),T_+(u_0)) = \mathbb R$. The blow up criterion is relevant for our purpose only for the case of (\gls{ZK23}) which is $L^2$ critical.

\textbf{Acknowledgement.} The author would like to thank his advisor Raphaël Côte for suggesting this problem, for his constant scientific support, and whose pertinent comments greatly improved this manuscript.

\small

\bibliographystyle{plain}

\bibliography{biblio.bib}

\end{document}